\documentclass{article}
\usepackage[affil-it]{authblk}
\usepackage{geometry}
\usepackage{latexsym}
\usepackage{amsthm}
\usepackage{amssymb}
\usepackage{amscd}
\usepackage{amsmath}
\usepackage{pdflscape}
\usepackage{bm}
\usepackage[font=small,labelfont=bf]{caption}
\usepackage{graphicx}
\usepackage{epsfig}
\usepackage{epstopdf}
\usepackage{placeins}
\usepackage{booktabs}
\usepackage[caption=false]{subfig}
\usepackage{thmtools,thm-restate}
\makeatletter
\newcommand*\rel@kern[1]{\kern#1\dimexpr\macc@kerna}
\newcommand*\widebar[1]{%
  \begingroup
  \def\mathaccent##1##2{%
    \rel@kern{0.8}%
    \overline{\rel@kern{-0.8}\macc@nucleus\rel@kern{0.2}}%
    \rel@kern{-0.2}%
  }%
  \macc@depth\@ne
  \let\math@bgroup\@empty \let\math@egroup\macc@set@skewchar
  \mathsurround\z@ \frozen@everymath{\mathgroup\macc@group\relax}%
  \macc@set@skewchar\relax
  \let\mathaccentV\macc@nested@a
  \macc@nested@a\relax111{#1}%
  \endgroup
}
\makeatother

\providecommand{\R}{\mathbb{R}}

\providecommand{\Z}{\mathbb{Z}}
\providecommand{\N}{\mathbb{N}}
\providecommand{\vol}[1]{\text{vol}\left({#1}\right)}

\renewcommand{\vec}[1]{\boldsymbol{#1}}

\providecommand{\abs}[1]{\left\vert#1\right\vert}
\providecommand{\norm}[1]{\left\Vert#1\right\Vert}
\providecommand{\set}[1]{\left\{#1\right\}}

\providecommand{\paren}[1]{( #1 )}

\providecommand{\brac}[1]{\left [ #1 \right]}
\newcommand{\s}[1]{\begin{equation*} \begin{split} #1 \end{split} \end{equation*}}
\providecommand{\eqref}[1]{(\ref{#1})}

\newtheorem{thm}{Theorem}[section]
\newtheorem{theorem}[thm]{Theorem}
\newtheorem{lemma}[thm]{Lemma}
\newtheorem{conj}[thm]{Conjecture}

\newtheorem{definition}[thm]{Definition}
\newtheorem{Definition}[thm]{Definition}
\newtheorem{proposition}[thm]{Proposition}
\newtheorem{example}[thm]{Example}
\newtheorem{question}[thm]{Question}
\numberwithin{equation}{section}
\graphicspath{{.//}}

\begin{document}

\title{\Large Limits of Embedded Graphs \\ \vspace{10pt} \small and Universality Conjectures for the Network Flow}
\author{Benjamin Schweinhart}
\date{June 2017}

\maketitle

\begin{abstract}
We define notions of local topological convergence and local geometric convergence for embedded graphs in $\R^n,$ and study their properties. The former is related to Benjamini-Schramm convergence, and the latter to weak convergence of probability measures with respect to a certain topology on the space of embedded graphs. These are used to state universality conjectures for the long-term behavior of the network flow, or curvature flow on embedded graphs. To provide evidence these conjectures, we develop and apply computational methods to test for local topological and local geometric convergence.

\end{abstract}

\section{Introduction}
It has long been observed, in experiments and simulations, that scale-invariant statistics of grain boundary networks appear to be universal.~\cite{1986mullins,2012lazar} That is, the statistics of their long-term scale-invariant properties are largely independent of the initial conditions. This is particularly interesting because the universal condition is both statistical and transient - a finite graph that approaches it will not stop flowing, but will rather continue evolving until it has very few edges. This observation motivates the introduction of two notions of graph limit for embedded graphs, one topological and one geometric. We use these to formally state universality conjectures for the network flow on graphs. We computationally test the local topological and local geometric convergence of simulations of the network flow in two dimensions. 

In Section~\ref{sec_curvature_flow} we briefly introduce the network flow for embedded graphs in $\R^n,$ which is also known as curvature flow on embedded graphs. The planar case is the simplest model of the physical phenomenon of grain growth in a polycrystal~\cite{1956mullins}, and the evolution of graphs in higher dimensions is also of mathematical interest. Most of the concepts defined here can be extended to general $k$-dimensional regular cell complexes in $\R^n,$ and a similar universality phenomenon has been observed for the most physically important case of $2$-dimensional cell complexes in $\R^3.$  The eventual goal of the program outlined here would be to prove the existence of universal statistics in that case, and to study their properties. However, we will focus on embedded graphs in this paper for the purpose of clarity.

We define two notions of of convergence for embedded graphs - local topological, or Benjamini-Schramm, convergence in Section~\ref{sec_swatch} and local geometric convergence in Section~\ref{sec_geoConv}. The former implies convergence of all local topological statistics, and is connected to the method of swatches introduced previously in our previous paper~\cite{2016schweinhart,2012masonB}. The latter notion was developed in analogy with the former, and implies convergence of averages of local geometric properties, in a sense to be defined in this paper. It is weak convergence of probability measures on the space of embedded graphs with a smooth topology of disjoint topological types (Section~\ref{sec_topType}). This topology is not particularly nice or natural, but we give conditions in Theorem~\ref{thm_convergence} under which weak convergence of probability measures on the space of graphs with either the Hausdorff metric topology or the varifold topology implies local geometric convergence.

In Section~\ref{sec_computations}, we introduce computational methods to test for local topological and local geometric convergence, and apply them to simulations of the network flow on planar graphs. In contrast, previous papers appearing in the materials science and physics literatures have used a few ad-hoc measures to claim convergence of simulations to the conjectural universal state, before proceeding to study its properties. The notions presented here were developed in part to motivate a more systematic method to verify the convergence of such simulations.

We formalize universality conjectures for the network flow in Section~\ref{sec_conj}. The main hypothesis we propose is that the graphs are homogeneous. Homogeneous graphs are defined in~\ref{sec_hom}, and are roughly those whose local properties have well-defined spatial averages. Computational simulations indicate that these graphs either evolve to approach a universal state, or in very special cases become stationary with respect to the flow. The main conjectures are:

\begin{conj}[Universality Conjecture for Local Topological Convergence]
\label{steady_state_hypothesisb}
There exists a probability distribution $\sigma_n$ on the space of countable, connected graphs with a root vertex specified such that any network flow $G\paren{t}$ with homogeneous initial condition $G\paren{0}\in\mathcal{G}^n$ converges in the local topological sense to $\sigma_{\Omega}$ or to a stationary state as $t\rightarrow\infty.$
\end{conj}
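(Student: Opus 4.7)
The plan is to attack this conjecture in three stages: construct a candidate universal distribution $\sigma_n$ as a fixed point of a rescaled flow; establish sequential compactness for the orbits of homogeneous initial conditions in the local topological topology; and then argue uniqueness of the limit by exploiting the irreversibility of the flow. Throughout, I would work with the random rooted graph obtained from a homogeneous $G(t)\in\mathcal{G}^n$ by picking a root according to the natural spatial density, which by the definition of homogeneity in Section~\ref{sec_hom} has a well-defined distribution in the local topological sense.

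First I would pass to a self-similar rescaling. The network flow has a parabolic scaling (edge lengths shrink like $\sqrt{t}$, consistent with the von Neumann--Mullins area law in two dimensions), so rescaling lengths by a factor that keeps the mean edge length fixed yields an autonomous dynamics on rooted embedded graphs. A universal distribution then corresponds to a stationary measure for this rescaled dynamics. I would try to build one by a Krylov--Bogolyubov construction: apply the rescaled evolution to an arbitrary homogeneous initial condition, take Ces\`aro averages of the push-forward measures on rooted graphs, and extract a weak-$*$ limit. Tightness of these averages in the local topological topology should come from a uniform combinatorial bound (bounded vertex degree, bounded number of edges in a ball of fixed combinatorial radius) that is preserved along the flow. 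Compactness in this setting is essentially Benjamini--Schramm compactness on rooted graphs of bounded degree.

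Second, I would need to show that homogeneity is preserved along the flow, i.e., if $G(0)$ is homogeneous then so is $G(t)$ for all $t\ge 0$. This is where the locality of the topological events (edge collapses and $T_1$ neighbor switches) matters: each event only affects the combinatorics in a bounded neighborhood, so a spatial-averaging argument should show that the existence of local statistical averages is not destroyed. Once homogeneity is preserved, the orbit $t\mapsto G(t)$ gives a one-parameter family of probability measures on rooted graphs, and the compactness argument above yields subsequential local topological limits along $t\to\infty$ for every homogeneous initial condition.

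The hard part, and the main obstacle, is uniqueness: showing that every subsequential limit equals $\sigma_n$ unless $G(t)$ tends to a stationary configuration. The cleanest route would be to find a Lyapunov functional or relative-entropy quantity that is monotone along the rescaled flow and whose minimizers are exactly $\sigma_n$ together with the exceptional stationary states; I am not aware of such a functional, and producing one seems to require genuinely new ideas. Absent that, I would attempt a coupling argument: given two homogeneous initial conditions $G_1(0)$ and $G_2(0)$, couple the two network flows so that, after rescaling, the local neighborhoods around typical roots agree with probability tending to one. The irreversibility of edge collapses should be the engine of such a coupling, since each collapse erases bounded-radius combinatorial information; quantifying this loss of information in a way that forces convergence to a unique measure is, in my view, where the deepest analytic work will have to be done. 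Finally, the exceptional stationary states should be characterized as the critical points of the rescaled flow (for example, in two dimensions, periodic tilings by regular hexagons), which one expects to form a closed, nowhere dense subset of homogeneous initial conditions.
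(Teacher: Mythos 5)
There is a fundamental mismatch here: the statement you are addressing is stated in the paper as a \emph{conjecture}, and the paper offers no proof of it --- only computational evidence (earth mover's distances of swatch distributions from simulations to a candidate universal state in Section~\ref{sec_computations}). Your text is likewise not a proof but a research program, and by your own admission it omits the step that \emph{is} the conjecture. The Krylov--Bogolyubov construction, even if it could be carried out, would only produce \emph{some} invariant measure for the rescaled dynamics starting from a given initial condition; universality is precisely the claim that this limit is independent of the (homogeneous, non-stationary) initial condition, and that is the uniqueness step for which you concede you have neither a Lyapunov functional nor a working coupling. So the proposal does not close the gap between ``subsequential limits exist'' and ``there is a single $\sigma_n$.''

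Several of the preliminary steps are also not as safe as you suggest. First, the rescaled flow is not known to define an autonomous dynamics on measures: the paper notes explicitly that the behavior of the network flow after a singular time is not necessarily uniquely determined, and long-time existence from an infinite (homogeneous) initial condition is itself open, so the semigroup needed for Ces\`aro averaging may not exist. Second, tightness via ``bounded degree preserved along the flow'' is not automatic: singularities occur when connected collections of edges shrink to a point, producing vertices of degree $\geq 3$ with no a priori upper bound, so Benjamini--Schramm compactness on bounded-degree rooted graphs does not apply without an additional (unproven) degree bound. Third, preservation of homogeneity is not a consequence of the locality of the discrete topological events alone, since curvature flow itself is a PDE whose evolution at a point depends on an unbounded chain of neighboring edges over any positive time interval; some quantitative finite-speed-of-propagation or mixing estimate would be needed, and none is available. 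None of this makes your outline unreasonable as a program --- it is broadly consistent with the invariant-measure reformulation the paper itself gives in Conjecture~\ref{steady_state_hypothesis_ergodic} --- but it should not be presented as a proof of the statement.
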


\begin{conj}[Universality Conjecture for Local Geometric Convergence]
\label{steady_state_hypothesisa}
There exists a probability distribution $\Xi_n$ on $\mathcal{G}^n$ such that  that any network flow $G\paren{t}$ with homogeneous initial condition $G\paren{0}\in\mathcal{G}^n$converges in the scale-free local geometric sense  to $\Xi_{n}$ or a stationary state as $t\rightarrow\infty.$
\end{conj}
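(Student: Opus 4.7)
Since this is a conjecture, I outline a program rather than a proof; the obstructions below are precisely why the statement is posed as Conjecture~\ref{steady_state_hypothesisa} rather than as a theorem. The plan is to reduce the problem to an analysis of a rescaled dynamical system on the space of probability measures over $\mathcal{G}^n$ equipped with the local geometric topology of Section~\ref{sec_geoConv}, and then prove existence, uniqueness, and attractivity of a fixed point.

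First, I would set up the rescaling. Under the network flow the characteristic length scale grows (in two dimensions it grows like $\sqrt{t}$ by the von Neumann--Mullins relation), so the unrescaled flow has no hope of converging. For a homogeneous initial condition $G(0)$ I would define a time-dependent length scale $\ell(t)$ and the rescaled graph $\widetilde G(t) = \ell(t)^{-1} G(t)$, then consider the associated probability measure $\Xi_t$ on $\mathcal{G}^n$ obtained by pointing at a uniformly random point (in the sense made precise by homogeneity). The conjecture then translates into: $\Xi_t \to \Xi_n$ weakly with respect to the local geometric topology.

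Second, I would prove tightness of $\{\Xi_t : t \geq t_0\}$. Using Theorem~\ref{thm_convergence}, it suffices to control either Hausdorff-local or varifold-local statistics of the rescaled graph. Homogeneity provides spatial averages, and one needs a priori bounds on the rescaled edge-length distribution, valence distribution, and local curvature. Such bounds would follow from an entropy/energy estimate along the flow (the total length is a Lyapunov functional in $n=2$, and one expects coercive control on higher-order geometric quantities). Given tightness, extract any weak subsequential limit $\Xi_\infty$.

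Third, I would show every limit point is a fixed point of the induced flow on measures, then prove uniqueness. Fixed-point characterization would use continuity of the network flow on an appropriate subspace, together with the observation that after rescaling the time-derivative of local geometric statistics must vanish in the limit (else $\Xi_{t}$ could not stabilize). Uniqueness is the core difficulty: one would want an ergodicity or contraction statement, perhaps a monotonicity/entropy inequality showing that two solutions approach one another in some Wasserstein-type distance on measures over $\mathcal{G}^n$. The degenerate case of the conjecture -- convergence to a stationary state -- would be handled separately by classifying fixed graphs of the flow (lines, regular tilings, etc.) and showing these form a measure-zero exceptional set of initial conditions in the homogeneous class.

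\textbf{Main obstacle.} The truly hard step is uniqueness and global attractivity of $\Xi_n$; even existence of a subsequential limit is nontrivial because the network flow has topological singularities (edge collapses, neighbor-switching events) where continuity of the induced map on measures can fail. In two dimensions this is essentially the universality problem for grain growth, which has resisted rigorous treatment for decades despite strong numerical evidence. A plausible partial result would be to restrict to a class of initial conditions with sufficient mixing (e.g., Poisson--Voronoi) and prove that subsequential limits exist and are characterized by a small number of moments, postponing full uniqueness; combined with the computational evidence developed in Section~\ref{sec_computations}, this would constitute meaningful progress toward the conjecture.
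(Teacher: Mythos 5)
This statement is a conjecture, and the paper offers no proof of it --- only the computational evidence of Section~\ref{sec_computations}, where local topological distances ($d_r$) and local geometric distances (the earth mover's distance induced by $d_L$) from simulations to a candidate universal state are shown to decrease to within finite-size error, together with a numerical check of uniform separation via the $\delta$-thickenability statistic $\Delta(G)$. You correctly recognized that no proof exists and offered a program instead, so there is no question of a gap relative to the paper's argument; there is no such argument to compare against.

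That said, your program is broadly consistent with how the paper itself frames the problem. Your reduction to a fixed-point problem for an induced flow on probability measures over $\mathcal{G}^n$ is essentially the paper's own reformulation in Section~\ref{sec_conj} (the invariant-measure version, Conjecture~\ref{steady_state_hypothesis_ergodic}, which posits a unique $\hat{\Omega}$-invariant homogeneous measure obtained as $\lim_{t\to\infty}\mathcal{P}(G(t))$). Your proposed use of Theorem~\ref{thm_convergence} to get compactness matches the paper's mechanism exactly: uniform separation (Definition~\ref{defn_unifSep}) gives tightness via the compact sets $X_{\delta,D}$ of Proposition~\ref{prop2}, and this is precisely the quantity the paper monitors numerically. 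Two small cautions. First, the conjecture is stated for scale-free local geometric convergence (Section~\ref{sec_scaleFree}), which requires the rescaled graphs to share a well-defined characteristic length scale; your $\ell(t)=\sqrt{t}$ normalization presumes the coarsening rate, whereas the paper's definition normalizes by an intrinsic length scale $\widebar{g}_{G(t)}$ of the graph itself, which sidesteps having to prove the $t^{1/2}$ law first. Second, your claim that stationary limits form a measure-zero exceptional set is stronger than what the paper conjectures --- the paper allows convergence to a stationary state as a genuine alternative outcome (e.g.\ perturbations of the hexagonal lattice), not merely a negligible one, so that dichotomy should be kept rather than dismissed.
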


\begin{figure}
\center
\subfloat[]{%
	\label{ss_2D}{%
		\includegraphics[width=7cm]{%
			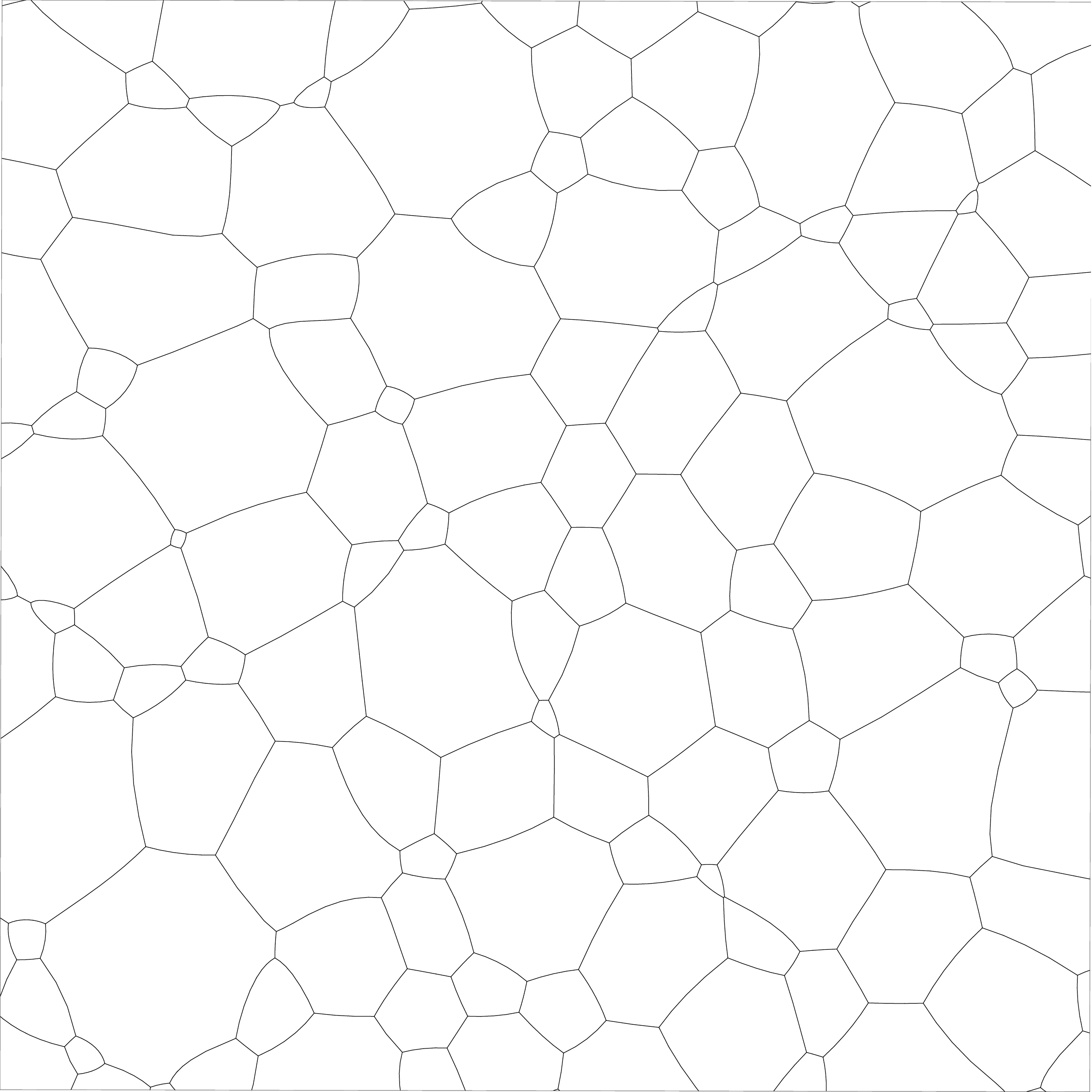}}}
\hspace{20pt}
\subfloat[]{%
	\label{ss_3D}{%
		\includegraphics[width=7cm]{%
			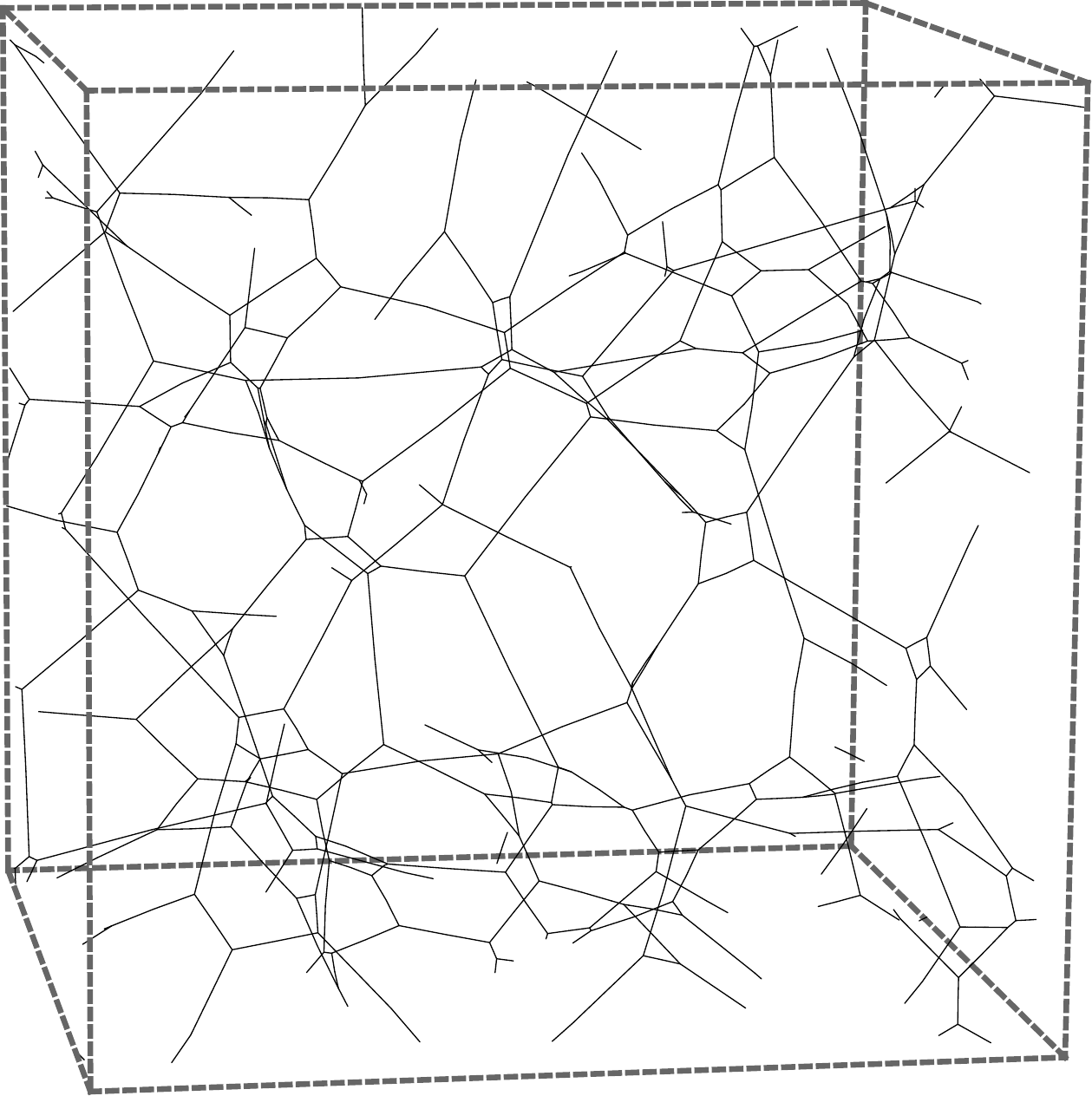}}}
\caption{Small regions of the conjectured universal state in (a) 2 and (b) 3 dimensions. They were produced by simulations discussed in Section~\ref{sec_computation}}
\end{figure}

A stationary state is one where the graph is at equilibrium with respect to the flow: its edges are straight and meet at trivalent vertices at angles of $2\pi/3.$ 

\subsection{Definition of Embedded Graphs}

Throughout, an embedded graph will be a locally finite collection of smooth curves in $\R^n,$ subject to certain niceness conditions:
\begin{Definition}
An \textbf{embedded graph} $G\subset K$ in a compact subset $K$ of $\R^n$ is a finite collection of smoothly embedded, connected curves with boundary with the following properties:
\begin{itemize}
\item The curves only meet at their boundaries, and are called the edges of the embedded graph. 
\item The boundary points of the curves are called vertices, and at least three curves meet at each vertex not in $\partial K.$
\item The outward-pointing unit tangent vectors of the curves meeting at a vertex are distinct.
\end{itemize} 
An embedded graph $G$ in an open subset $U$ of $\R^n$ is a collection of smoothly embedded, compact, connected curves with boundary so that $\widebar{G\cap K}$ is an embedded graph for any compact subset $K$ of $\R^n.$
\end{Definition} The second hypothesis implies that two embedded graphs in $\R^n$ are homeomorphic if and only if they are combinatorially equivalent. The intersection of an embedded graph in $\R^n$ with any open or compact subset of $\R^n$ is also an embedded graph, though new edges may be created in the process (as well as vertices at the boundary, if the set is compact) .In general, it is not always the case that $\widebar{U\cap G}=\bar{U}\cap G.$ The sets of embedded graphs in $\R^n$ and the open $n-$ball of radius $r$ are denoted $\mathcal{G}^n$ and $\mathcal{G}_r^n,$ respectively. We will define a topology for these sets in Section~\ref{sec_graphTop}.

\subsection{Summary of Concepts}
Here, we include a table summarizing the notions of graph limits defined in this paper, and a list of definitions of the terms appearing therein.
\begin{table}[ht]
  \footnotesize
  \centering
  \begin{tabular}{|c|c|c|c|}
    \hline\textbf{Convergence} & \textbf{Local Topological} & \textbf{Local Geometric} \\ \hline
    \textbf{Space of}  & Abstract Graphs &  Embedded Graphs in $\R^n$ \\ 
		& with Bounded Degree & with Smooth Geometric Cloth \\ \hline
    \textbf{Local} & Probability Distribution & Probability Distribution\\
		\textbf{Distributions} & on $\mathcal{S}_{r,k}$   &  on $\mathcal{G}_{r}^n$   \\ \hline
    \textbf{Universal} & Probability Distribution & Probability Distribution\\ \hline
		\textbf{Distribution}& on $\mathfrak{G}^{\bullet}$ & on $\mathcal{G}^n$\\ \hline
    \textbf{Symmetry} & Involution Invariance  & Translation Invariance \\ \hline
		\textbf{Addnl. Symmetry} & Topological Homogeneity & Homogeneity (Ergodicity of Translations) \\
		\textbf{Convergence} & Convergence of discrete & Weak Convergence of\\ 
		& Prob. Distributions & Unif. Separating Sequence \\ \hline
		\textbf{Controls} & Local Topological Stats & Local Geometric Stats \\ \hline
		
  \end{tabular}
	\end{table}  
	\normalsize
	\begin{enumerate}
		\item Local topological convergence - this is the same notion as Benjamin-Schramm convergence. We use both terms interchangeably, to emphasize the analogy with local geometric convergence. Defined in Section~\ref{sec_convergence}.
	\item $\mathcal{S}_{r,k}$ - the set of combinatorial isomorphism classes of rooted graphs. Defined in Section~\ref{sec_cloth}.
	\item  $\mathfrak{G}^{\bullet}$ - the space of abstract graphs with a root vertex specified. Defined in Section~\ref{sec_convergence}.
  \item Involution invariance - roughly that a probability distribution on $\mathfrak{G}^\bullet$ gives equal weight to different root vertices of the same graph. 
		\item Local topological property - Definition~\ref{local_topo_prop}. Roughly, any property that can be expressed in terms of subgraph frequencies.
	\item  Topological homogeneity - Definition~\ref{defn_tophomo}. That local topological statistics of an embedded graph in $\R^n$ are well-defined over more general sets than balls.
	\item $\mathcal{G}_{r}^n$ ($\mathcal{G}^n$) - the space of embedded graphs in the open $n$-ball of radius $r$ ($\R^n$), with the local Hausdorff metric topology. 
	\item Smooth geometric cloth - Definition~\ref{def_geo_cloth}. A probability distribution on $\mathcal{G}^n$ reflecting the averages of local geometric properties of a graph over balls of increasing radius.
	\item Uniformly separating sequence - Definition~\ref{defn_unifSep}. A sequence of probability measures on $\mathcal{G}^n$ whose local distributions are tight with respect to a certain collection of compact subsets of $\mathcal{G}^n.$
	\item Local geometric property - Definition~\ref{def_local_prop}. A bounded, continuous function on the space of embedded graphs in the ball of radius $r$ with the smooth topology of topological types.
		\item Homogeneity - Definition~\ref{defn_homo}. A graph $G\in\mathcal{G}^n$ is homogeneous if the $\R^n$ translation action is ergodic with respect to its geometric cloth.
	\end{enumerate}

\section{Motivation: Curvature Flow on Graphs}
\label{sec_curvature_flow}

We provide a brief introduction to the network flow, which is curvature flow on embedded graphs in $\R^n.$ A full discussion of the current state of mathematical knowledge about this flow is beyond the scope of this paper, and we direct readers to the comprehensive survey by Mantegazza, Novaga, Pluda, and Schulze~\cite{MNPS16}. Instead, we focus on a qualitative description of the properties observed in simulations, in order to motivate the graph limits defined later in the paper. We should note that the mathematical analysis of these systems is very difficult, but significant progress has recently been made for the case of graphs embedded in two dimensions~\cite{2014ilmanen}.

\begin{figure}
\center
\includegraphics[width=.9\textwidth]{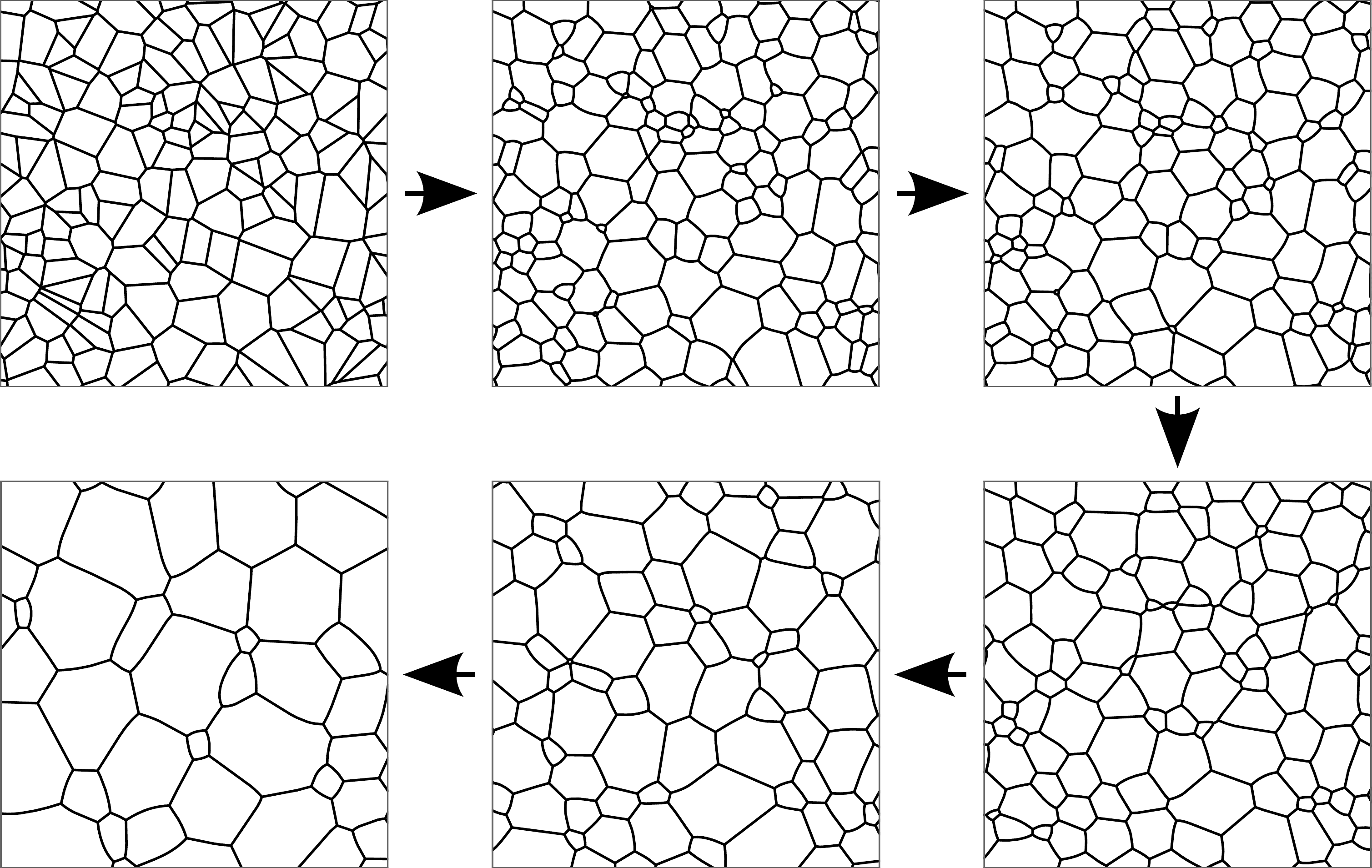}
 \caption{\label{fig:ggEvolution} A planar graph evolving by curvature flow.}
\end{figure} 

We  define curvature flow on embedded graphs by locally prescribing the behavior at each point, following the approach in ~\cite{2014ilmanen}. Alternatively, it can be defined as a type of ``gradient flow'' on the sum of the lengths of the edges~\cite{2014elsey}, but we omit that description here for simplicity.

A \textbf{regular graph} is an embedded graph such that every vertex has degree three, and the angles between the outward pointing tangent vectors of any edges meeting at a vertex equal $\frac{2\pi}{3}$ at that vertex. The latter condition is called the Herring Angle Condition. For such graphs, curvature flow is defined as follows:

\begin{Definition}
A continuous function $f:\paren{T_1,T_2}\rightarrow \mathcal{G}^n$ is a \textbf{curvature flow on regular graphs} if $f\paren{t}$ is a regular graph for all $t\in\paren{T_1,T_2},$ and there there exists a collection of parametrizations of the edges of $f\paren{t},$ $\set{\gamma_i:\paren{T_1,T_2}\rightarrow\R^n}$, satisfying
\s{\frac{\partial \gamma_i\paren{t}}{\partial t}=\kappa\vec{n}\;\;\;\; \forall t\in\paren{T_1,T_2}}
where $\kappa$ is the curvature and $\vec{n}$ is the unit normal vector with orientation given by the parametrization.
\end{Definition}
The restriction to regular graphs is quite severe, but the definition of the the network flow can be extended to more general graphs as follows:
\begin{Definition}
A continuous function $f:\brac{T_1,T_2}\rightarrow \mathcal{G}^n$ is a \textbf{curvature flow on graphs} and a \textbf{network flow} if for any bounded region $B\subset\R^n,$ there is a finite collection of times $\set{T_1=s_1,\ldots,s_m=T_2}$ such that $f_B\paren{t}:=f\paren{t}\cap B$ defines a curvature flow on regular graphs on the interval $\paren{s_i,s_{i+1}}$ and 
\s{\lim_{t\rightarrow s_i} f\paren{t}=f\paren{s_i}}
for each $1\leq i \leq n,$ in the varifold topology. The $\set{s_i}$ are called singular times.
\end{Definition}
\cite{2014ilmanen} discusses hypotheses under which a network flow exists starting from a non-regular initial condition in two dimensions.Note that the behavior of the flow after a singular time is not necessarily uniquely determined.

\subsection{Topological Changes}
A key feature of curvature flow on graphs is the existence of singularities in time which result in topological changes. The simplest of these changes, occurs when an edge shrinks to a point and two trivalent vertices collide, resulting in a vertex of degree four. This vertex will split into two vertices of degree three, with the adjacent edges shuffled.  Another topological change occurs when a triangle shrinks to a point, resulting in the deletion of three edges and three vertices. Ilmanen, Neves and Schulze~\cite{2014ilmanen} conjecture that all singularities of the network flow in two dimensions occurs when when a connected collection of edges shrinks to a point to form a vertex of degree greater than or equal to three. In particular, the flow never leads to an edge with multiplicity greater than one.

\subsection{Qualitative Behavior}

\begin{figure}
\center
\subfloat[]{%
	\label{fig:vor}{%
		\includegraphics[width=7cm]{%
			/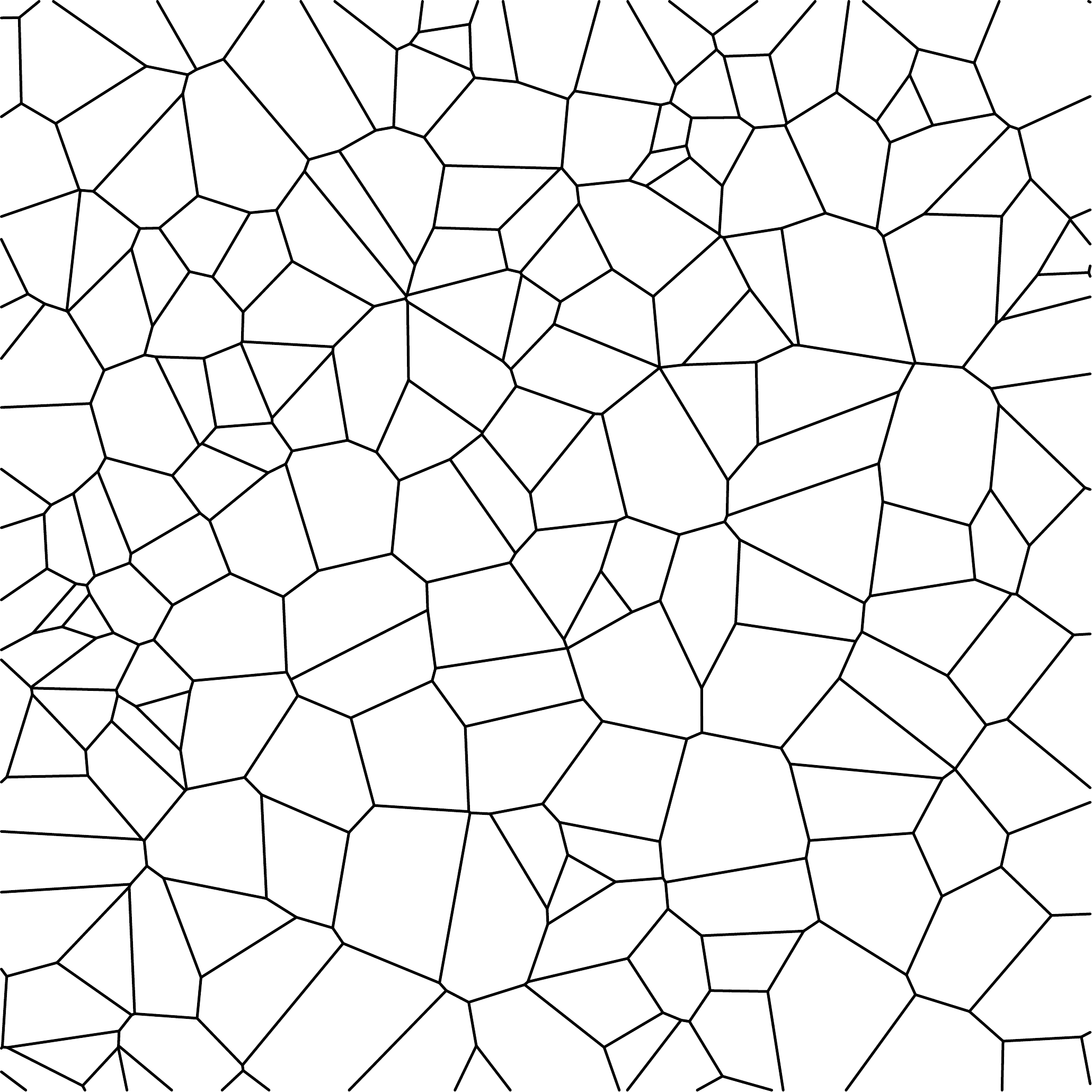}}}
\hspace{20pt}
\subfloat[]{%
	\label{perturbed_lattice}{%
		\includegraphics[width=7cm]{%
			/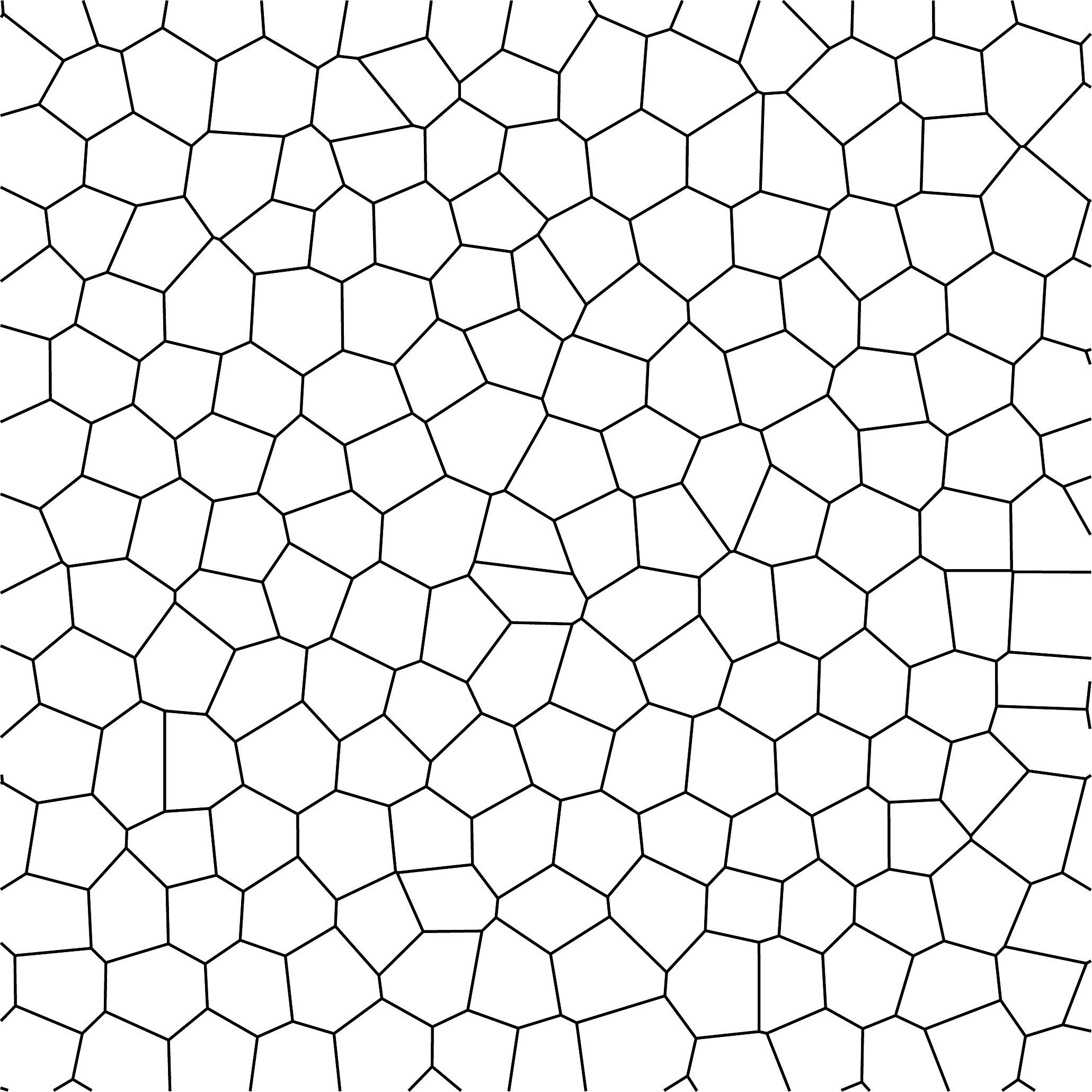}}}
\caption{\label{fig_initialConditions}Two initial conditions that appear to exhibit universal behavior under curvature flow: (a) a Voronoi diagram of Poisson distributed points and (b) a perturbed hexagonal lattice.}
\end{figure}

A mathematical understanding of the long-term behavior of the network flow is beyond current knowledge. However, materials scientists and physicists have put considerable effort toward simulating these systems. Here, we briefly discuss the observed behavior for graphs embedded in two dimensions.

For many random initial conditions, such as those depicted in Figure~\ref{fig_initialConditions}, the network flow appears to exhibit universal behavior in the long-term. That is, the statistics of their long-term scale-invariant properties converge to values that are independent of the initial conditions. We will show evidence for this in Section~\ref{sec_computations}. Another important behavior of these systems is that the average edge length is observed to decrease at a rate of $t^{-1/2}.$. This causes the area of components of the complement (or ``grains'') to increase, in a process called coarsening. Simulated systems of periodic graphs coarsen until until they reach a very small stationary state.

Stationary states of the network flow are embedded graphs with straight line edges meeting at trivalent vertices at angles of $2\pi/3.$ An example is the regular hexagonal lattice. There also exist non stationary flows that share the local statistics a stationary state. For example, an infinite hexagonal lattice with an edge removed will have one grain that expands forever, but the graph will always look hexagonal sufficiently far away from it.

Finally, one can construct examples that neither become stationary nor appear to exhibit universal behavior. Two such initial conditions are shown in Figure~\ref{fig_halfHex2}. We introduce the hypothesis of homogeneity in Section~\ref{sec_hom} to rule out this behavior.

\begin{figure}
\subfloat[]{%
	\label{fig_halfHex2}{%
		\includegraphics[width=7cm]{%
			/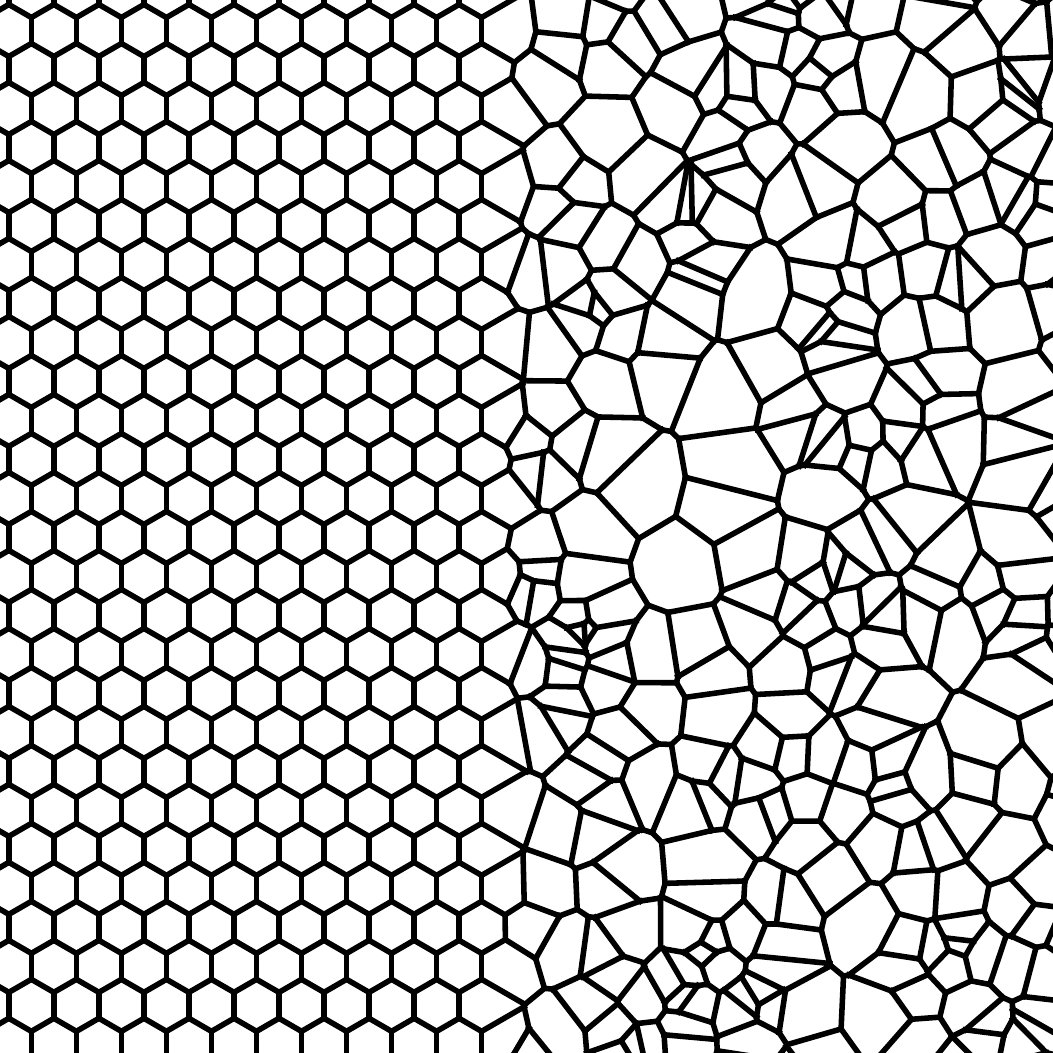}}}
\hspace{20pt}
\subfloat[]{%
	\label{fig_1Din2D}{%
		\includegraphics[width=7cm]{%
			/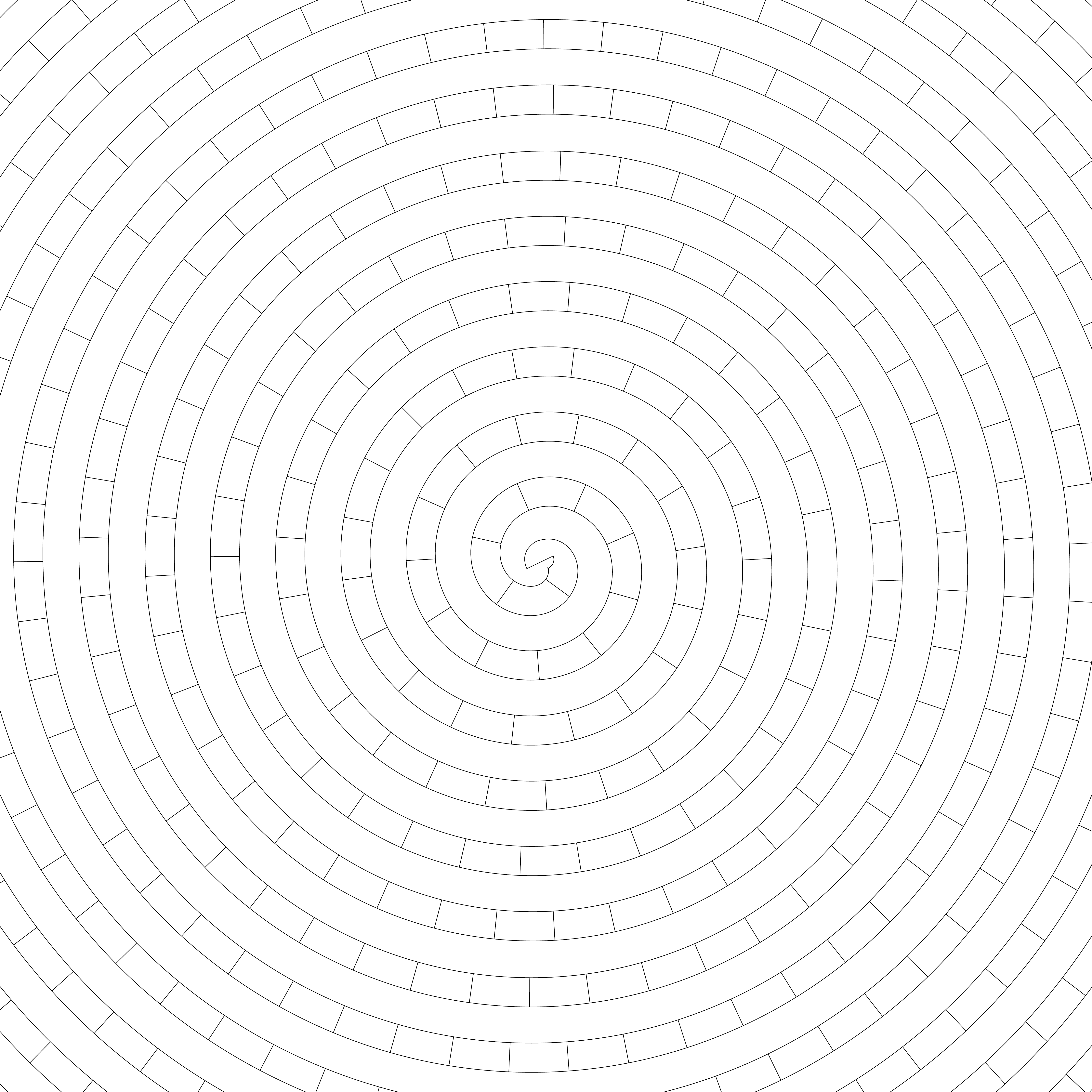}}}
\caption{\label{fig_inhomogenous} Finite regions of two graphs that do not appear to exhibit universal behavior. }
\end{figure}

\section{Local Topological Convergence}
\label{chp_Swatches}
\label{sec_swatch}
Local topological convergence, or Benjamini-Schramm convergence, is a notion of weak local limit for abstract graphs. It associates to a graph a family of probability distribution of local topological configurations of radius $r$ for each $r\in\N.$ Local topological convergence is the convergence of these discrete probability distributions. It was first noted that curvature flow on graphs could correspond to a Benjamini-Schramm graph limit in our paper ``Topology of Random Cell Complexes, and Applications''~\cite{2016schweinhart}, building on the work in~\cite{2012masonB}. In that paper, we defined local topological convergence for general regular cell complexes, but here we briefly review the concept for graphs.

\subsection{Swatches and Topological Cloths}
\label{sec_cloth}

In the following, the graph distance between two cells (edges or vertices) of a graph $G$ is the minimum number of edges and vertices in a path connecting them. The graph distance from a vertex to a neighboring edge is one, between vertices sharing an edge is two, and between cells in different connected components is $\infty.$ . A swatch is a ball in the graph distance:

\begin{Definition}
Let $v$ be a vertex of a graph $G.$ The \textbf{swatch} of radius $r$ at $v$ is the ball of radius $r$ centered at $v$ in the graph distance.
\end{Definition}

Note that the swatch is not a graph on vertices contained in $G,$ but rather a bipartite graph on the set of edges and vertices of $G.$ That is, an edge may be contained in a swatch that does not contain both of its adjacent vertices. Allowing such intermediate neighborhoods does not matter for the theory of topological convergence, but it gives more flexibility for computation applications.

We will be counting graph isomorphism classes of swatches. Two swatches are said to have the same \textbf{swatch type} if they are isomorphic as bipartite, rooted graphs. A \textbf{subswatch} of a swatch is a swatch of smaller radius at the same root vertex.

\begin{figure}
\center
\subfloat[]{%
	\label{free_swatch}{%
		\includegraphics[height=5cm]{%
			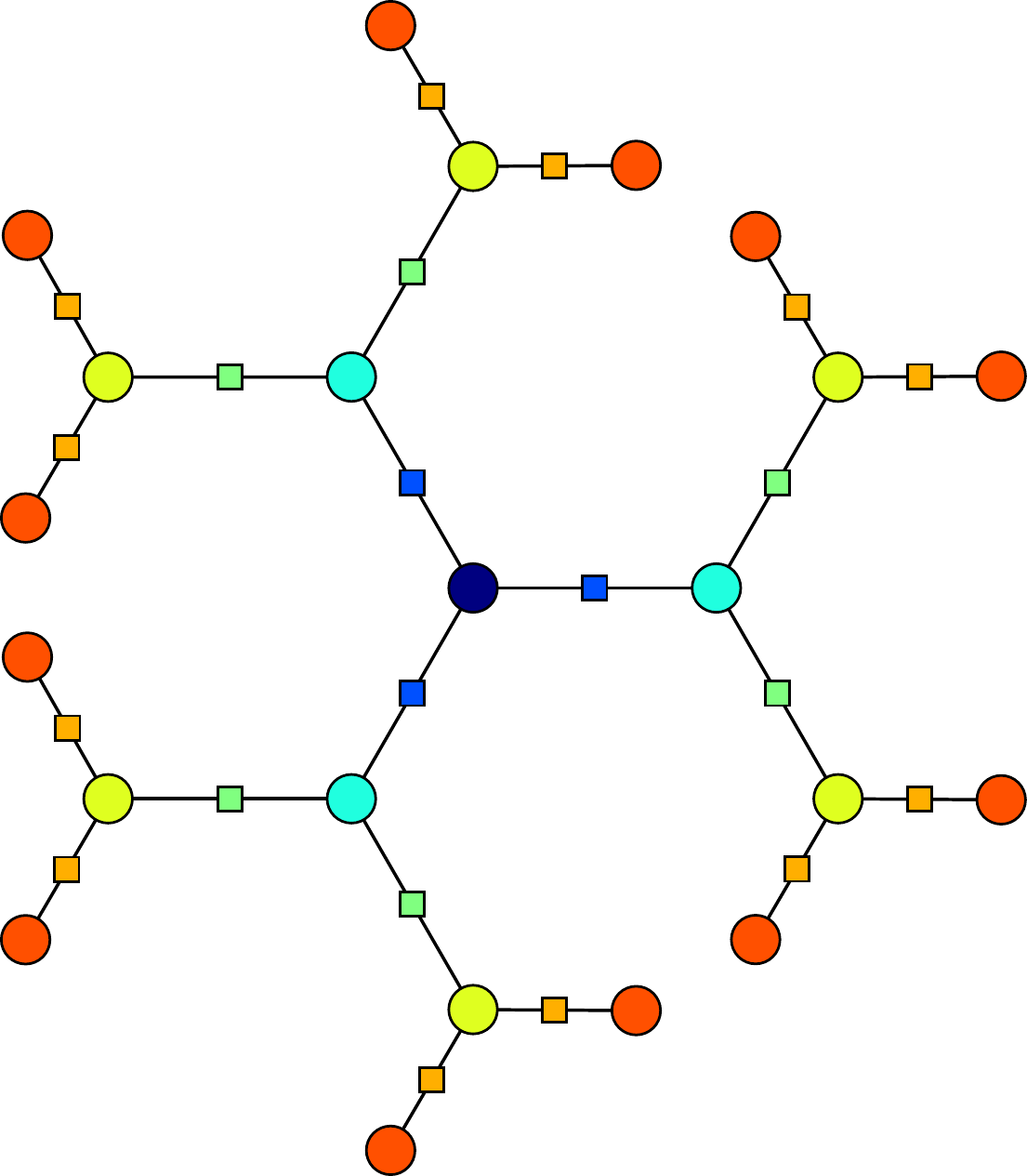}}}
\hspace{20pt}
\subfloat[]{%
	\label{not_free_swatch}{%
		\includegraphics[height=4.75cm]{%
			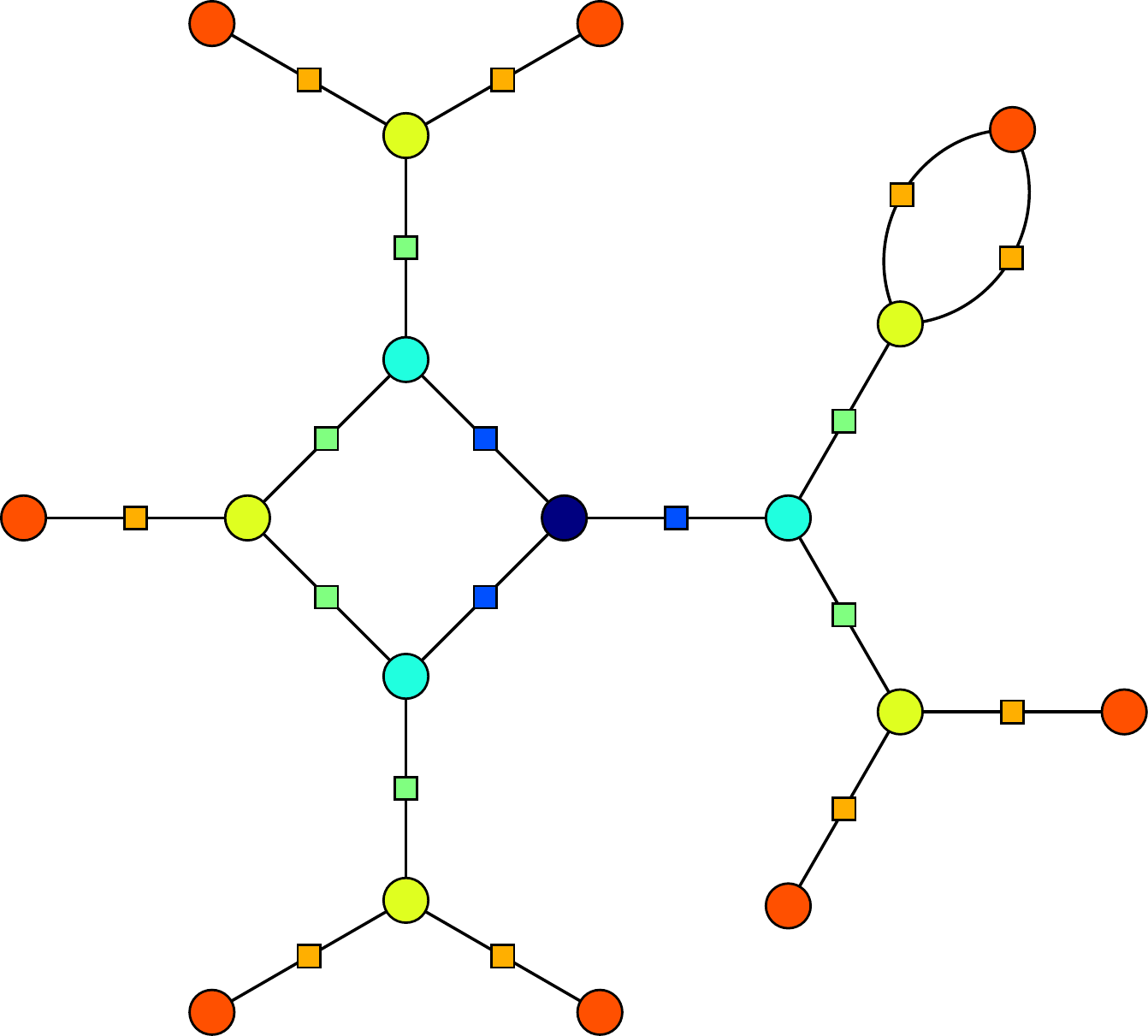}}}
\caption{\label{fig_swatchTypes}Swatch types of radius six in an embedded graph. The vertex color indicates distance from the root, with the root colored dark blue. (a) A free swatch where there is a single path from the root to any given vertex. (b) A swatch that contains a cycle of length four and a cycle of length two.}
\end{figure}

It will be useful later to have a distance on swatches. Let the largest common subswatch of two swatches be the swatch of largest radius that is a subswatch of both. The distance between two swatches is defined as the reciprocal of the number of vertices and edges in the largest common subswatch, or zero if the swatches are the same. For example, the largest radius for which the swatch in Figure~\ref{not_free_swatch} is free is $r = 3$, and the distance to the free swatch in Figure~\ref{free_swatch} is $1/13$.

\begin{Definition}
Let $G$ be a finite, connected graph. The \textbf{topological cloth at radius $r$}, denoted $\mathcal{S}_r\paren{G}$ is the probability measure on the set of swatch types of radius $r$ induced by the counting measure on the vertices. That is, for a swatch type $S,$  $\mathcal{S}_r\paren{G}\paren{S}$ is the proportion of vertices of $G$ whose swatch of radius $r$ is combinatorially equivalent to $S.$
\end{Definition}Letting $r$ vary, we get a family of probability distributions called the topological cloth of $G.$  The topological cloth characterizes the local topology of the graph in the sense of determining the probability at which any local configuration appears in the graph, as well as of prescribing all of its local topological properties (in a sense defined below).

\subsection{A Topological Distance on Graphs}
\label{sec_metric}

We use the earth mover's distance on the topological cloth at radius $r$ to define a family of distances on graphs. Suppose $\paren{X,\rho}$ is a finite metric space, and $P_1$ and $P_2$ are two probability distributions on it. A matching $M$ of $P_1$ and $P_2$ is a probability distribution on $X\times X$ with marginals are $P_1$ and $P_2.$ The earth mover's distance $d_{\text{em}}$, or first Wasserstein metric, is the minimum cost of a matching between $P_1$ and $P_2$ \cite{1781Monge,1998Rubner}:
\s{d_\text{em}\paren{P_1,P_2}=\min_{M}\sum_{x_1,x_2\in X}\rho\paren{x_1,x_2}M\paren{x_1,x_2}}
Given two graphs $G_1$ and $G_2,$ define the distance at radius $r$ to be the earth mover's distance between their topological cloths of radius $r$:
\s{d_r\paren{G_1,G_2}=d_{\text{em}}\paren{\mathcal{S}_r\paren{G_1},\mathcal{S}_r\paren{G_2}}}
$d_r$ is uniformly bounded and non-decreasing in $r$, and it stabilizes for some finite $r$ if the graphs are finite. The limit distance on graphs is defined as the limit of $d_r$ with increasing $r$, or
\begin{equation}
\label{eq_clothDist}
d_\infty\paren{G_1,G_2}=\lim_{r\rightarrow\infty}d_r\paren{G_1,G_2}
\end{equation}Note that a a sequence $\set{G_i}$ is Cauchy in $d_\infty$ if and only if all swatch frequencies converge.

\subsection{Local Topological Convergence}
Let $\mathfrak{G}^\bullet$ be the space of countable, connected graphs with a root vertex specified. The theory of Benjamini-Schramm graph limits associates to a Cauchy sequence in $d_\infty$ a probability distribution on $\mathfrak{G}^\bullet:$

\label{sec_convergence}
\begin{theorem}[Benjamini-Schramm Convergence~\cite{2012lovasz,2001benjamini}]
\label{thm_BS}
Let $G_1,G_2,\ldots=\left\{G_i\right\}$ be a Cauchy sequence of finite graphs in $d_\infty.$ There exists a unique probability distribution $\sigma$ on $\mathfrak{G}^\bullet$ such that $\left\{G_i\right\}$ converges to $\sigma$ in $d.$ The value of $\sigma$ on a basis set $E_S$ is given by $\sigma\paren{E_S}=\sigma_r\paren{S}$ where $r$ is the radius of the swatch $S$ and $\sigma_r$ is the limiting distribution on the set of swatches of radius $r.$ $\sigma$ is called the \textbf{Benjamini-Schramm limit} or \textbf{local topological limit} of the sequence of graphs.
\end{theorem}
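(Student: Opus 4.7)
The plan is to build $\sigma$ radius-by-radius from its marginals on swatch types, and then invoke a standard extension theorem to produce the full Borel probability measure on $\mathfrak{G}^\bullet.$ I would proceed in four steps.

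First, I would extract the family of limiting swatch distributions. Because $d_r$ is non-decreasing in $r$ with limit $d_\infty,$ a Cauchy sequence in $d_\infty$ is Cauchy in $d_r$ for every fixed $r.$ Since the set of swatch types of radius $r$ (with the degree bound implicit in $\mathfrak{G}^\bullet$) is countable and EMD on probability measures over a bounded discrete metric space is complete and metrizes convergence of each swatch frequency, we obtain a limiting probability distribution $\sigma_r$ on swatch types of radius $r,$ characterized by $\mathcal{S}_r\paren{G_i}\paren{S}\to\sigma_r\paren{S}$ for every swatch type $S$ of radius $r.$

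Second, I would verify the natural consistency relation. Every swatch of radius $r+1$ rooted at $v$ has a unique radius-$r$ subswatch at $v,$ giving a surjection $\pi_{r+1,r}$ from radius-$(r+1)$ swatch types to radius-$r$ swatch types. Since $\mathcal{S}_r\paren{G_i}=(\pi_{r+1,r})_\ast\mathcal{S}_{r+1}\paren{G_i}$ for every $i,$ taking limits gives $\sigma_r=(\pi_{r+1,r})_\ast\sigma_{r+1},$ i.e. $\sigma_r\paren{S}=\sum_{S'\in\pi_{r+1,r}^{-1}\paren{S}}\sigma_{r+1}\paren{S'}.$

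Third, I would use this compatible family to define $\sigma$ on $\mathfrak{G}^\bullet.$ The basis sets $E_S,$ as $S$ ranges over all finite swatch types, form a countable semialgebra generating the Benjamini--Schramm topology, and the consistency relation makes the assignment $\sigma\paren{E_S}:=\sigma_r\paren{S}$ a well-defined finitely additive probability on the algebra of cylinder sets. With the bounded-degree hypothesis, $\mathfrak{G}^\bullet$ is compact in the Benjamini--Schramm topology and each cylinder $E_S$ is clopen, so finite additivity on this algebra automatically upgrades to countable additivity; Carath\'eodory's extension theorem then yields a unique Borel probability measure $\sigma$ on $\mathfrak{G}^\bullet$ with $\sigma\paren{E_S}=\sigma_r\paren{S}.$ Uniqueness of $\sigma$ follows because the cylinders $E_S$ form a $\pi$-system generating the Borel $\sigma$-algebra, and convergence $G_i\to\sigma$ in the local topological sense is exactly the statement $\mathcal{S}_r\paren{G_i}\paren{S}\to\sigma\paren{E_S}$ for all $r$ and $S,$ which was established in step one.

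The main obstacle I anticipate is the countable additivity step, which quietly relies on compactness of $\mathfrak{G}^\bullet$; without a uniform degree bound one would instead have to argue tightness directly, showing that the $\sigma_r$ place uniformly small mass on root neighborhoods of very high degree. A secondary point to check is that $\sigma$ is supported on \emph{connected} rooted graphs, which is automatic because each swatch is connected, so the radius-$r$ ball about the root of a $\sigma$-typical rooted graph is a connected subgraph for every $r.$
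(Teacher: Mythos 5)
The paper does not actually prove this theorem: it is stated as a quoted result with citations to Lov\'asz and to Benjamini--Schramm, so there is no internal argument to compare yours against. Judged on its own, your construction is the standard one from that literature --- extract the limiting marginals $\sigma_r$ on swatch types, check consistency under the restriction maps $\pi_{r+1,r},$ and extend to a Borel probability measure on $\mathfrak{G}^\bullet$ via Carath\'eodory, using compactness to upgrade finite to countable additivity --- and it is essentially sound. Two points deserve emphasis. First, your step one leans on the claim that Cauchyness in the earth mover's distance $d_r$ forces pointwise convergence of every swatch frequency; with the paper's swatch metric (reciprocal of the size of the largest common subswatch) two \emph{distinct} radius-$r$ swatch types can be arbitrarily close to one another when degrees are unbounded, since their common subswatch of radius $r-1$ can contain arbitrarily many cells. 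So this step, like the countable-additivity step you already flag, genuinely requires the uniform degree bound that the paper's summary table assumes but the theorem statement omits; under that bound the set of radius-$r$ swatch types is finite, distinct types are uniformly separated, and both steps go through. Second, you should note that the paper's swatches are rooted \emph{bipartite} graphs on the union of the vertex and edge sets rather than ordinary rooted balls; for bounded-degree graphs the two encodings determine each other, so your consistency and extension arguments are unaffected, but the identification is worth a sentence. With those caveats made explicit, your proof is a correct and reasonably complete substitute for the external citation.
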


\subsection{Consequences of Topological Convergence}
If $G$ is a finite graph, and  $H$ is a finite, two-colored graph graph, let $\text{hom}\paren{H,G}$ be the number of graph homomorphisms $H$ into the adjacency graph on the union of the vertex and edge sets of $G,$ and let $\text{inj}\paren{H,G}$ be the number that are injective. Similarly, $H'$ and $G'$ are (possibly infinite) rooted graphs, let $\text{hom}\paren{H',G'}$ and $\text{inj}\paren{H',G'}$ be the homomorphisms that map root to root.

\begin{Definition}
If $H$ is a finite, two-colored graph and $G$ is a finite graph, the \textbf{injective homomorphism frequency} of $H$ in $G$ is
\s{rho\paren{H,G}=\frac{\text{inj}\paren{H,G}}{\abs{G}}=\sum_{v\in G}\frac{\text{inj}\paren{H',G^v}}{\abs{G}}}
where $G^v$ is $G$ with root vertex $v$ specified.If $\sigma$ is a probability distribution on $\mathcal{G}^\bullet,$ and $H'$ is a rooted graph, the injective homomorphism frequency of $H'$ in $\sigma$ is
\s{\rho\paren{H',\sigma}=\mathbb{E}_{G'\in G}\text{inj}\paren{H',G'}}
where the expectation is taken with respect to $\sigma.$
\end{Definition}The probability distributions occurring as Benjamini-Schramm graph limits have a property called involution invariance, which implies that $\rho\paren{H',\sigma}$ does not depend on the choice of root of $H.$

\begin{Definition}
\label{local_topo_prop}
A \textbf{local topological property} of graphs is any property that can be expressed in terms of a finite combination of injective homomorphism frequencies.
\end{Definition}For example, any quantity of the form $\frac{\text{hom}\paren{H,G}}{\abs{G}}$ is a local topological property~\cite{2012lovasz}.

\begin{theorem}
A sequence of graphs $\set{G_i}$ converges in the local topological sense to a limit distribution $\sigma$ if and only if all local topological properties to the corresponding quantities defined for the Benjamini-Schramm limit.~\cite{2012lovasz}
\end{theorem}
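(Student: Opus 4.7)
The plan is to prove both directions by exhibiting an explicit invertible linear relation between the swatch frequencies $\mathcal{S}_r(G)(S)$ and the injective homomorphism frequencies $\rho(H',G)$ for rooted graphs $H'$ of radius at most $r$.

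First I would establish the forward direction by observing that if $H'$ is a rooted graph of radius at most $r$, then any injective homomorphism from $H'$ into $G^v$ (root to root) must land inside the swatch of radius $r$ at $v$, so $\text{inj}(H',G^v)$ depends only on the isomorphism type of this swatch. Summing over vertices gives
\begin{equation*}
\rho(H',G)=\sum_{S\in\mathcal{T}_r}\mathcal{S}_r(G)(S)\cdot\text{inj}(H',S),
\end{equation*}
where $\mathcal{T}_r$ denotes the (countable, and under the standing bounded-degree hypothesis, finite for each $r$) set of swatch types of radius $r$. The same identity holds for the limit $\sigma$, replacing $\mathcal{S}_r(G)$ by the marginal $\sigma_r$. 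Now Benjamini-Schramm convergence is exactly the statement $\mathcal{S}_r(G_i)\to\sigma_r$ weakly for every $r$ (this is how Theorem~\ref{thm_BS} is set up, and it is also equivalent to $d_\infty(G_i,\sigma)\to 0$), so the linear relation immediately gives $\rho(H',G_i)\to\rho(H',\sigma)$ for every rooted $H'$. Any local topological property, being a finite combination of such frequencies, converges as well.

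For the reverse direction, I would invert the linear relation above. Restrict to rooted graphs $H'$ that are themselves swatch types of radius $r$, so the relation becomes a square system indexed by $\mathcal{T}_r\times\mathcal{T}_r$ with matrix $M_{H',S}=\text{inj}(H',S)$. Order the swatch types by total number of cells (edges plus vertices); then $\text{inj}(H',S)=0$ whenever $H'$ has strictly more cells than $S$, and $\text{inj}(S,S)=|\text{Aut}(S)|\geq 1$, so $M$ is triangular with positive diagonal and hence invertible over $\mathbb{Q}$. Inverting expresses each swatch frequency $\mathcal{S}_r(G)(S_0)$ as a finite rational linear combination of $\rho(H',G)$ over $H'\in\mathcal{T}_r$. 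Convergence of these particular local topological properties therefore forces $\mathcal{S}_r(G_i)(S_0)\to\sigma_r(S_0)$ for every $r$ and every $S_0$, which is local topological convergence.

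The main technical point is the finiteness of $\mathcal{T}_r$, which is needed to make the inversion step a finite linear-algebra argument rather than an infinite one; this relies on the bounded-degree hypothesis that is implicit in the Benjamini–Schramm framework recorded in the summary table. A subsidiary but entirely routine point is checking that injective homomorphisms (rather than all homomorphisms, or partial embeddings into the edge-vertex bipartite swatch structure of Section~\ref{sec_cloth}) are the right combinatorial object: since each swatch is itself a finite rooted graph, the injection-based identity goes through verbatim, and the standard passage between $\text{inj}$ and $\text{hom}$ noted after Definition~\ref{local_topo_prop} shows the two formulations of ``local topological property'' agree.
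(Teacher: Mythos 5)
The paper does not actually prove this theorem; it is quoted from Lov\'asz with a citation and no argument, so there is nothing internal to compare your proposal against. What you have written is the standard proof from the graph-limits literature, and the overall strategy --- a linear relation between swatch frequencies and injective homomorphism frequencies, inverted by a triangularity argument, with finiteness of $\mathcal{T}_r$ supplied by bounded degree --- is the right one. The forward direction is correct as written: a root-preserving homomorphism cannot increase graph distance to the root, so an injective homomorphism from a rooted graph of radius at most $r$ lands in the radius-$r$ swatch, and the identity $\rho\paren{H',G}=\sum_{S}\mathcal{S}_r\paren{G}\paren{S}\,\text{inj}\paren{H',S}$ follows by grouping vertices by swatch type.

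The one genuine flaw is in the inversion step. Ordering the swatch types of radius $r$ by total cell count does \emph{not} make $M_{H',S}=\text{inj}\paren{H',S}$ triangular: two non-isomorphic swatch types with the same number of cells can admit an injective homomorphism from one to the other. In the bipartite incidence structure of a swatch an edge need not be incident to both of its endpoint vertices (this is exactly the point of Section~\ref{sec_cloth}), so a cell-bijective homomorphism can strictly increase the number of incidences and therefore fail to be an isomorphism. Under your ordering $M$ is only block upper-triangular, and you have not argued that the diagonal blocks are invertible. The repair is routine: order lexicographically by the pair (number of cells, number of incidences). An injective homomorphism sends distinct incidences to distinct incidences, so when cell counts agree it cannot exist if $H'$ has strictly more incidences than $S$, and when both counts agree it is forced to be an isomorphism; with this refinement $M$ is genuinely triangular with diagonal entries $\abs{\text{Aut}\paren{S}}\geq 1$, and the rest of your argument (finiteness of $\mathcal{T}_r$, rational inversion, convergence of every swatch frequency) goes through.
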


\subsection{Cloths and Limits of Embedded Cell Complexes}
\label{sec_infiniteCloth}
Here, we extend the notions of topological cloth and local topological convergence from finite graphs to to countable, locally finite graphs embedded  in $\R^n.$ Let $G$ be an embedded graph in $\R^n.$ We associate to $G$ to sequences of finite graphs: $I_r\paren{G}$ is the union the vertices and edges of $G$ completely contained in the ball of radius $r$ centered at the origin, and $O_r\paren{G}$ is the union of the vertices and edges of $G$ intersecting that ball. 

\begin{Definition}If $\lim_{r\rightarrow\infty}O_r\paren{G}$ is locally topologically convergent, we say that $G$ has an \textbf{outer-regular topological cloth}. If $I_r\paren{G}$ does, $G$ has an \textbf{inner-regular topological cloth}. If they are equal, $G$ has a \textbf{well-defined topological cloth}, which we call $\mathcal{S}\paren{G}.$ 
\end{Definition}
Note that $\mathcal{S}\paren{G}$ does not depend on the choice of origin in $\R^n.$ If $\set{G_i}$ is a sequence of embedded graphs with well-defined topological cloths, we say that $\set{G_i}$ is locally topologically convergent if the probability distributions $\mathcal{S}\paren{G_i}$ converge strongly to a limit distribution.

\section{Topologies on the Space of Embedded Graphs}
\label{sec_graphTop}

In this section, we will study the properties of the several topologies on the space of embedded graphs, as preparation for the definition of local geometric convergence. The varifold topology is the most natural topology in the context of curvature flow on embedded graphs in $\R^n,$ but the topology induced by the local Hausdorff metric is perhaps more natural for computational applications. In this section, we study the properties of these two topologies, and relate them to a third topology called the smooth topology of topological types. 

Many of the proofs of the propositions in this section are quite technical, and are deferred to an appendix at the end of the paper.

\subsection{The Varifold Topology}

The varifold topology is the vague (or weak*) topology on certain Radon measures associated to embedded graphs. If $X$ is a locally compact Hausdorff space, let $\mathcal{M}^+\paren{X}$ be the space of positive, locally finite Radon measures on $X$. Give $\mathcal{M}^+\paren{X}$ the initial topology with respect to the integrals of compactly, supported, continuous functions. That is, a sequence of Radon measures $\mu_i$ converges to $\mu$ if and only if for every continuous, compactly supported function $f:X\rightarrow\R$ 
\s{\lim_{i\rightarrow\infty}\int_{X}f\;d\mu_i\rightarrow \int_{X}f\;d\mu}
$\mathcal{M}^+\paren{X}$ is metrizable, and if $Y\subset X$ the restriction map $\mathcal{M}^+\paren{X}\rightarrow\mathcal{M}^+\paren{Y}$ is continuous. If $X$ is compact, the vague topology~\cite{2002kallenberg,1976dieudonne}.

If $e$ is an edge of a  graph $G,$ we say that $e\in G$ and denote the tangent bundle of $e$ as $T_x\paren{e}.$ The tangent bundle $T\paren{G}\subset \R^n\times\mathbb{RP}^{n-1}$ of an embedded graph $G$ is
\s{T\paren{G}=\set{\paren{x,v:x\in e \in G, v\in T_x\paren{e}}}}
That is, the set of pairs $\paren{x,v}$ where $x$ is a point of $G$ and $v$ is a unit tangent vector of an edge containing $x.$ The projection map $T\paren{G}\rightarrow G$ is injective away from vertices.

An embedded graph with multiplicity $m$ is a graph $G$ together with a function $m:G\rightarrow \N$ that is constant on each edge. To such a graph, we associate a positive Radon measure $\mu_G$ on $\R^n\times \mathbb{RP}^{n-1}$ that assigns to a Borel subset of $Y$ of $\R^n$
\s{\mu_G\paren{Y}=\int_{T\paren{G\cap X}}m\;d\mathcal{H}^1}
where $\mathcal{H}^1$ is the first-dimensional Hausdorff measure. The \textbf{varifold topology} on the space of embedded graphs is the vague topology on the measures $\mu_G.$ associated measures. In other words a sequence of graphs $\set{G_i}$ converges to $G$ if and only if for any continuous, compactly supported function $f:\R^n\times\mathbb{RP}^{n-1}\rightarrow\R,$ 
\s{\lim_{i\rightarrow\infty} \int_{\R^n}f\mu_{G_i} = \lim_{i\rightarrow\infty} \int_{T\paren{G_i}}m_i f\;d\mathcal{H}^1\rightarrow \int_{T\paren{G}}m f\;d\mathcal{H}^1= \int_{\R^n}f\mu_{G}}
\begin{Definition}
Let $\check{\mathcal{G}}^n_r$ be the space of embedded graphs with multiplicity in the open $n$-ball of radius $r$ with the varifold topology. Let $\mathcal{G}^n$ be the space of embedded graphs with multiplicity in $\R^n$ with the same topology.
\end{Definition}For more information on the properties of the varifold topology, we suggest~\cite{1983simon,deLellis,1972allard}.
   
\subsection{The Hausdorff Metric Topology}
If $X$ and $Y$ are subsets of $\R^n$ the Hausdorff distance between them is 
\s{d_H\paren{X,Y}=\max\Big(\sup_{x\in X}\inf_{y\in Y} d\paren{x,y},\sup_{y\in Y}\inf_{x\in X} d\paren{x,y}\Big).}That is, the smallest $\epsilon$ so that $Y$ is contained in the closed $\epsilon$ neighborhood of $X$ and visa versa. $d_H$ makes the set of compact subsets of of any bounded subset of $\R^n$ into a complete metric space.  

Let $B_r$ be the open ball of radius $r$ centered at the origin in $\R^n,$ and $K\paren{B_r}$ be the space of relatively compact subsets of $B_r$ with the Hausdorff metric topology. If $r<s,$ the map from $K\paren{B_s}$ to $K\paren{B_s}$ given by intersecting a subset of $B_s$ with $B_r$ is not continuous. We introduce a different metric inducing at topology for which this map is continuous: 

\begin{Definition}
If $X_1$ and $X_2$ are subsets of the open ball of radius $r,$ $B_r,$ the \textbf{local Hausdorff distance} between them is
\s{d_L\paren{X_1,X_2}=d_H\paren{X_1\cup \partial B_r,X_2\cup \partial B_r}}
\end{Definition}

\begin{Definition}
$\mathcal{G}^n_r$ is the space of embedded graphs in the open ball of radius $r$ in $\R^n$ with the topology induced by the local Hausdorff distance. Let $p_{s,r}:\mathcal{G}_s^n\rightarrow \mathcal{G}_r^n$ for $s\geq r$ be the continuous map given by intersecting an embedded graph in $B_s$ with $B_r.$ Also, define $\mathcal{G}^n$ to be the inverse limit of the spaces $\mathcal{G}_r^n$  with respect to this family of maps.
\end{Definition}  

$\mathcal{G}_r^n$ is separable, with a dense subset given by graphs whose vertices have rational coordinates, and whose edges are given by polynomial maps from $\brac{0,1}$ to $B_r.$  $\mathcal{G}^n$ can be equivalently defined as the space of embedded graphs in $\R^n$ with the initial topology with respect to the intersection maps $p_r:\mathcal{G}^n\rightarrow \mathcal{G}_r^n$ for $n\in \N.$

Let $r:\check{\mathcal{G}}_r^n\rightarrow\mathcal{G}_r^n$ be the map induced by forgetting the multiplicity.
\begin{restatable}{proposition}{propOne}
\label{prop_HausdorffVarifold}
Let $\set{\check{G}_i}$ be a sequence of graphs in $\check{\mathcal{G}}^n_r$ that converges to a limit $\check{G}\in\check{\mathcal{G}}^n_r$. Then $\set{r\paren{G_i}}$ converges to $r\paren{\check{G}}.$ That is, $r$ is continuous.
\end{restatable}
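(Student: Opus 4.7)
The plan is to verify the two inequalities that together imply local Hausdorff convergence $d_L(G_i, G) \to 0$, where I abbreviate $G_i := r(\check{G}_i)$ and $G := r(\check{G})$. Concretely, I must show that for every $\epsilon > 0$, eventually (i) every $x \in G$ with $d(x, \partial B_r) > \epsilon$ lies within $\epsilon$ of $G_i$, and (ii) every $x \in G_i$ with $d(x, \partial B_r) > \epsilon$ lies within $\epsilon$ of $G$. Both directions will be extracted from vague convergence $\mu_{\check{G}_i} \to \mu_{\check{G}}$ by choosing appropriate bump test functions on $B_r \times \mathbb{RP}^{n-1}$.

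For (i), I would argue one point at a time. Given $x \in G$, pick a nonnegative $\phi \in C_c(B_\epsilon(x) \times \mathbb{RP}^{n-1})$ whose support meets the tangent bundle $T(G)$ in a neighborhood of $(x, T_x G)$. Since $G$ is locally a smooth $1$-submanifold through $x$ (or a union of such at a vertex) carrying positive multiplicity, $\int \phi \, d\mu_{\check{G}} > 0$, so vague convergence forces $\int \phi \, d\mu_{\check{G}_i} > 0$ for large $i$, which in turn forces $G_i \cap B_\epsilon(x) \neq \emptyset$. A standard finite-covering argument on the compact set $G \cap \overline{B_{r-\epsilon}}$ then upgrades the pointwise statement to the uniform one needed by $d_L$.

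For (ii), I would argue by contradiction. Suppose there exist $\epsilon > 0$, a subsequence, and points $x_i \in G_i$ with $d(x_i, G \cup \partial B_r) \geq \epsilon$. After passing to a further subsequence, $x_i \to x^*$ with $d(x^*, G \cup \partial B_r) \geq \epsilon$. Let $B := B_{\epsilon/3}(x^*)$; then $\bar B \subset B_r$, $\bar B \cap G = \emptyset$, and $x_i \in B$ for large $i$. Choosing a nonnegative test function dominating the indicator of $B \times \mathbb{RP}^{n-1}$, vague convergence gives $\mathcal{H}^1(G_i \cap B)$ (weighted by multiplicity) tending to $0$. Now I examine the edge $e_i \subset G_i$ through $x_i$: if $e_i$ exits $B$, then since $e_i$ is a connected curve starting at $x_i$, $\mathcal{H}^1(e_i \cap B) \geq \epsilon/3$, contradicting the vanishing mass.

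The main obstacle is the residual case in which $e_i$, and in fact the entire connected subgraph of $G_i$ traced by following edges outward from $x_i$, remains confined to $B$. Such a ``ghost'' component can carry arbitrarily small $\mathcal{H}^1$ mass — and is therefore invisible in the varifold limit — while still placing $x_i$ at Hausdorff distance $\geq \epsilon/3$ from $G$. Ruling this out is where I expect the actual appendix argument to deploy the precise structure of $\check{\mathcal{G}}_r^n$: probably by iterating the degree-$\geq 3$ vertex condition and the local smoothness of the edges to force such a trapped subgraph to accumulate a non-negligible amount of length (for instance, by showing that each interior vertex forces an extra edge of comparable scale, or by passing the component toward $\partial B_r$ and absorbing it into the boundary compactification implicit in $d_L$). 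Assuming this structural lemma, the contradiction is complete and continuity of $r$ follows.
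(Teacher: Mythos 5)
Your two directions line up with the paper's appendix proof almost exactly. The paper packages direction (i)+(ii) slightly differently --- it extracts a Hausdorff-convergent subsequence of the compact sets $G_i\cup\partial B_r$ and identifies the limit with $G$, rather than running your finite-covering argument --- but the two estimates are the same bump-function estimates you describe: a positive lower bound on $\int g\,d\mu_{\check{G}}$ near each $x\in G$ (the paper uses that an embedded graph has $\mathcal{H}^1$-mass at least $\epsilon$ in $B_{\epsilon/2}(x)$ around any of its points) forces $G_i$ to enter $B_\epsilon(x)$; conversely, a point of $G_i$ persisting near some $x\notin G$ forces a definite amount of $\mathcal{H}^1$-mass of $G_i$ inside a ball disjoint from $G$, contradicting vague convergence. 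So direction (i) and the ``exiting edge'' half of your direction (ii) are exactly the paper's argument.

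The residual ``ghost component'' case you flag is the genuine crux, and you should know that the paper does \emph{not} supply the structural lemma you anticipate. It disposes of the case in a single clause: the length lower bound $\mathcal{H}^1\paren{H_i\cap B_{\epsilon/2}\paren{x}}>\epsilon/2$ is asserted to hold ``because $H_i$ is connected,'' where $H_i$ is (a subsequence of) $G_i\cup\partial B_r$ --- i.e., the component of $G_i$ through the offending point is assumed to escape the small ball, so that connectedness converts proximity into length. There is no iteration of the degree-$\geq 3$ condition and no argument that a trapped subgraph must accumulate length; the connectedness of $G_i\cup\partial B_r$ is simply taken for granted, and nothing in the paper's definition of an embedded graph guarantees it. Indeed a small theta-graph (two trivalent vertices joined by three short edges) is a legitimate component of arbitrarily small diameter and mass, and $G_i=G\cup\theta_i$ with $\theta_i$ shrinking to a point off $G$ converges to $\mu_{\check G}$ as varifolds while $d_L\paren{G_i,G}$ stays bounded away from zero. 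So your hesitation is warranted: the step you could not complete is exactly the step the paper elides, and closing it requires an extra hypothesis (e.g., that every component of $G_i$ meets $\partial B_r$, or a uniform lower bound on the mass of components) rather than a consequence of the stated definitions.
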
 See Appendix~\ref{appendix} for the proof. A partial converse to this result follows from the Allard compactness theorem~\cite{1972allard}. If $G_i\rightarrow G$ in the local Hausdorff metric, and the $G_i$ have uniformly bounded curvatures and masses, that result implies that there is a convergent subsequence in the varifold topology, and that limit must equal $G$ modulo multiplicity. 

\subsection{The Smooth Topology of Topological Types}
\label{sec_topType}
A \textbf{topological type} $S$ of $\mathcal{G}_{r}^{n}$ is, as a set, equal to the collection of graphs in $\mathcal{G}_{r}^{n}$ sharing a single combinatorial isomorphism type. We will define a topology of smooth convergence on $S$ in which a sequence of graphs converges if and only if their edges converge smoothly to edges.

The non-linear Grassmanian $\text{Gr}\paren{\brac{0,1},\R^n}$ of smoothly embedded curves with boundary is the space
\s{\text{Gr}\paren{\brac{0,1},\R^n}=\frac{\text{Emb}\paren{\brac{0,1},\R^n}}{\text{Diff}\paren{\brac{0,1},\brac{0,1}}}}
where $\text{Emb}{\brac{0,1},\R^n}\subset C^{\infty}\paren{\brac{0,1},\R^n}$ is is the space of embeddings of $\brac{0,1}$ in $\R^n$ and $\text{Diff}\paren{\brac{0,1},\brac{0,1}}$ is the space of diffeomorphisms of the interval. $\text{Gr}\paren{\brac{0,1},n}$ is a smooth Frechet manifold~\cite{2014balmaz}, and can be metrized as follows. $C^\infty\paren{\brac{0,1},\R^n}$  is a Frechet space, there is a metric $\hat{d}$ inducing its topology. For a curve $x\in \text{Gr}\paren{\brac{0,1},\R^n},$ let $\phi_x^1:\brac{0,1}\rightarrow\R^n$ and $\phi_x^2$ the two unit-speed parametrizations. Then
\s{d\paren{x,y}=\min\paren{\hat{d}\paren{\phi_{x}^1,\phi_y^1},\hat{d}\paren{\phi_{x}^2,\phi_y^1}}}
induces the topology on $\text{Gr}\paren{\brac{0,1},n}.$\footnote{We will use metrizability of this space to apply the Portmanteau theorem. A difficulty in generalizing the concept of local geometric convergence to higher dimensional regular cell complexes is showing that the corresponding non-linear Grassmanians are completely regular.} 

If $S$ is a topological type with $l$ edges, there is an injective function $\psi:S\rightarrow\text{Sym}^{l}\paren{\text{Gr}^{D}\paren{k,n}}$ given by sending an embedded graph in $S$ to the tuple of the closures of its edges inside the closed ball of radius $r.$ Topologize $S$ as a subspace of $\text{Sym}^{l}\paren{\text{Gr}^{D}\paren{k,n}},$ so a sequence of graphs converges if and only if each of its edges converges in the smooth topology. $S$ is metrizable and second-countable.

To be precise, let $\set{S^i_{n,r}}$ be the set of topological types of $\mathcal{G}_r^n,$ one for each graph isomorphism class, and let $s_i:S^i_{n,r}\rightarrow \mathcal{G}^{n}_r$ be the map induced by inclusion.

\begin{Definition}
The space of embedded graphs in the ball of radius $r$ in $\R^n$ with the \textbf{smooth topology of topological types}, $\hat{\mathcal{G}}^{n}_r$ is the set $\mathcal{G}^n_r$ with the final topology with respect to the maps $s_i.$
\end{Definition}

\begin{Definition}
\label{def_local_prop}
A \textbf{local geometric property} of an embedded graph in $\R^n$ is a function of the form $f=\hat{f} \circ s^{-1}\circ p_r$ where $\hat{f}:\hat{\mathcal{G}}_r^n\rightarrow \R$ is a bounded, continuous function on $\hat{\mathcal{G}}_r^n.$
\end{Definition}

A \textbf{topological type with multiplicity} of $\check{\mathcal{G}}_r^n$ is as a set, equal to the collection of graphs in $\check{\mathcal{G}}_{r}^{n}$ sharing the same combinatorial isomorphism type as colored graphs, with the multiplicity giving the coloring. The construction of a smooth topology of topological types proceeds identically to that for graphs without multiplicity. In the next section, we introduce a concept of $\delta$-thickenability that allows us to mediate between these smooth topologies and the Hausdorff metric and varifold topologies.

\subsection{Thickenability of Embedded Graphs}
A $\delta$-thickenable graph in $\mathcal{G}_r^n$ is, roughly speaking, one whose combinatorial properties are unchanged if each vertex is replaced by a ball of radius $\epsilon<\delta/2,$ each edge is replaced by its $\epsilon^2/2$-tube, and it is intersected with the ball of radius $r-\epsilon/2.$  It is related to the notion of control data for a stratified space.  First, we require two concepts:
 \begin{itemize}
\item The (open) $\epsilon$-neighborhood of a a subset $X$ of $\R^n$ is the set of all points within distance $\epsilon$ of it. It is denoted $X_\epsilon.$
\item A smooth curve $e$ in $\R^n$ has an (open) tubular neighborhood of radius $\epsilon$ if every point in $e_\epsilon$ is closest to a unique point of $e.$ This neighborhood is denoted $T_\epsilon\paren{e}.$ If $e$ has a tubular neighborhood of radius $\epsilon,$ then the curvature of $e$ is above by $1/\epsilon,$ and that the intersection of $e$ with any ball of radius less than $\epsilon/2$ is connected.
\end{itemize}

\begin{figure}
\center
\subfloat[]{%
	\label{fig_beforeThicken}{%
		\includegraphics[width=.3\textwidth]{%
			/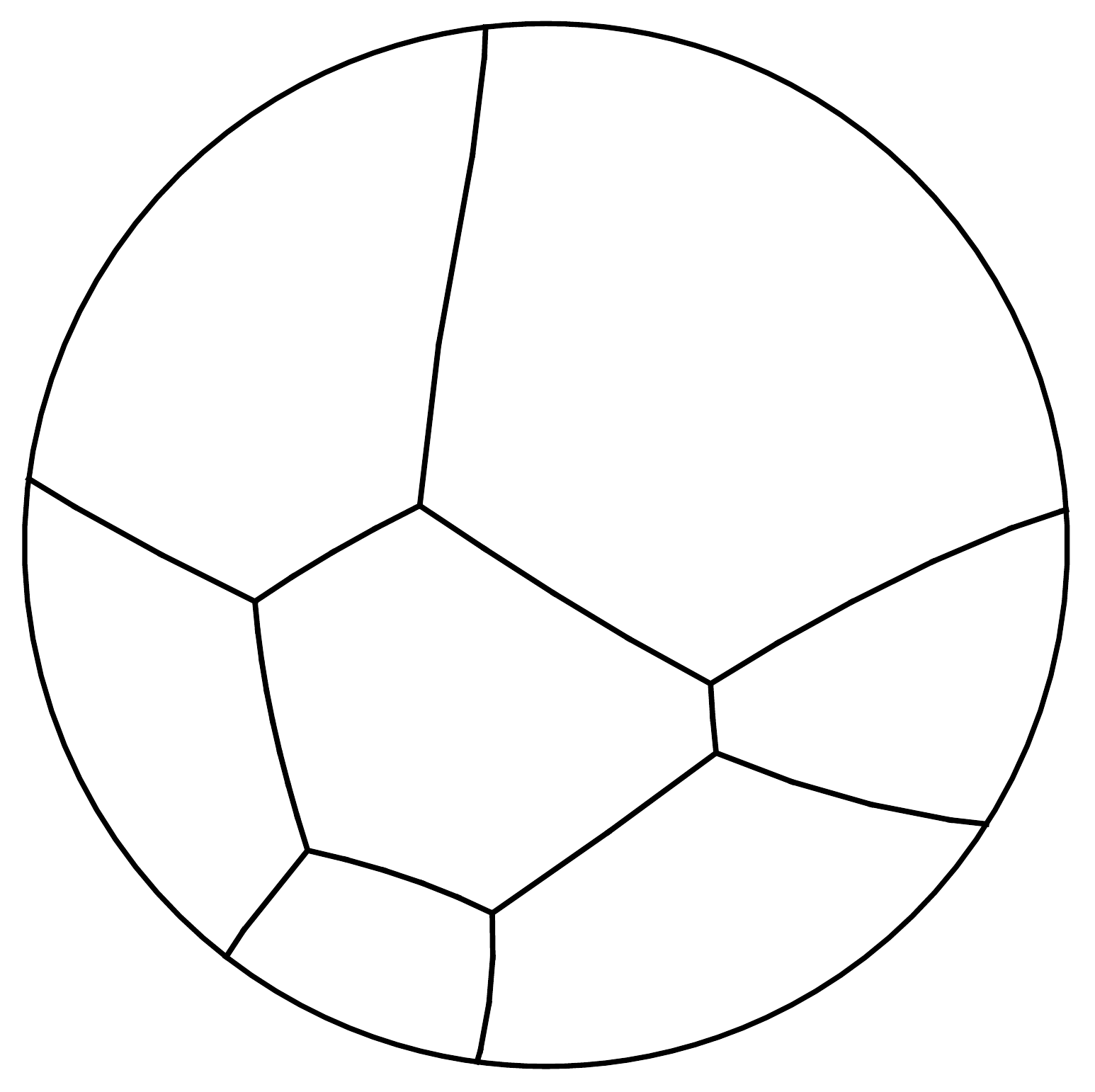}}}
\hspace{20pt}
\subfloat[]{%
	\label{fig_afterThicken}{%
		\includegraphics[width=.3\textwidth]{%
			/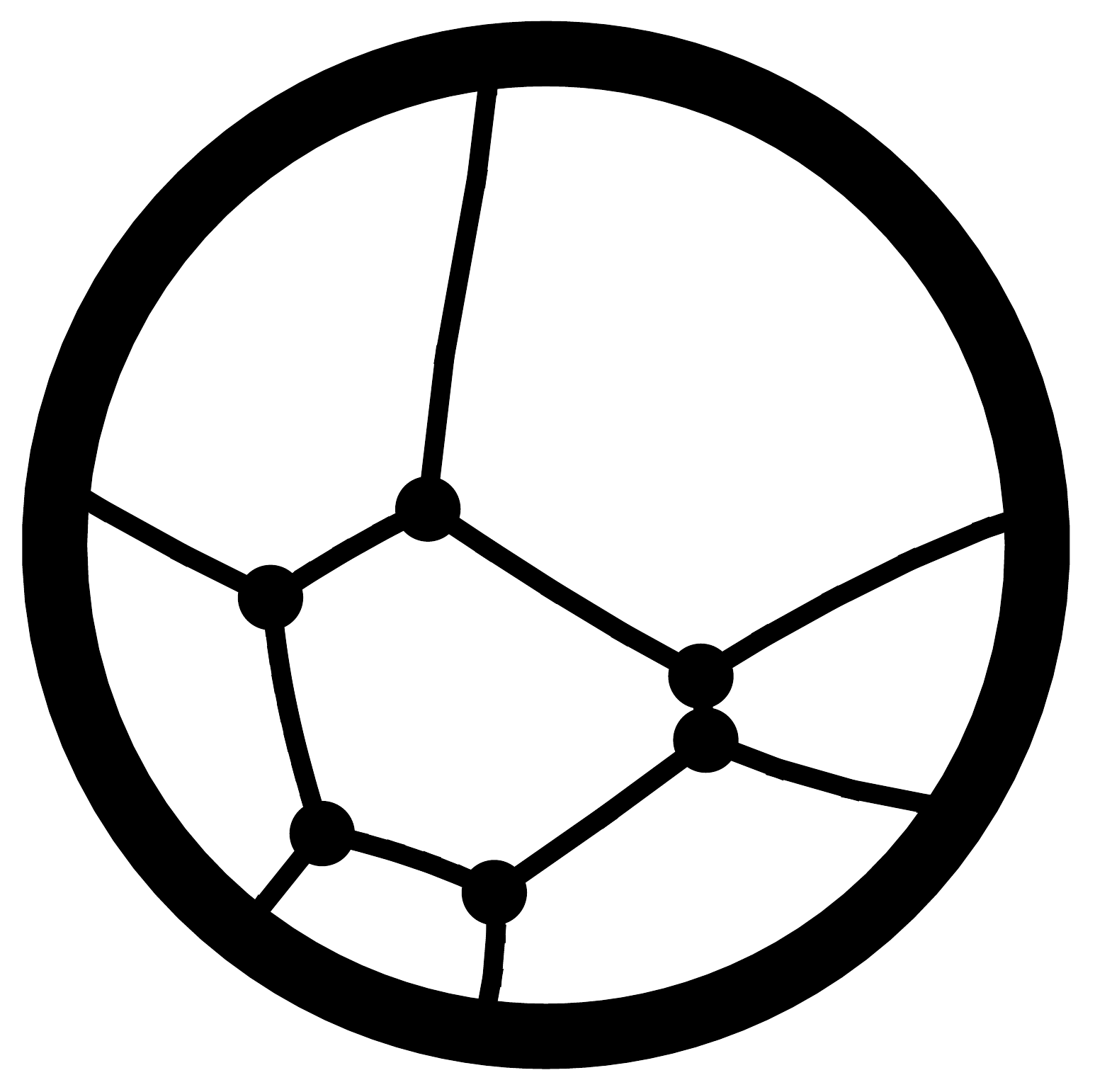}}}
\caption{\label{fig_thicken} (a) An embedded graph in $B_r$ that is $\delta$-thickenable for $\delta\leq \Delta=.133r.$ The limiting factor for thickenability is a pair of vertices of distance $\Delta$ apart. (b) The graph obtained by replacing each vertex with a ball of radius $\Delta/2$ and each edge with a tube of radius $\Delta^2/4.$}
\end{figure}

\begin{Definition}
$G\in\mathcal{G}^n_r$ is \textbf{$\delta$-thickenable} if it satisfies the following for all $\epsilon<\delta$:

\begin{enumerate}
\item If $v$ and $w$ are vertices of $G,$ then $d\paren{v,w}\geq\delta.$ Also, the only edges of $G$ that intersect $B_\delta\paren{v}$ are those adjacent to $v.$
\item Each edge has a tubular neighborhood of radius $2\epsilon.$ 
\item If $e$ is an edge of $G$ adjacent to vertices $v_1$ and $v_2$ then 
\s{T_{\epsilon^2}\paren{e}\cap \paren{G-e}\subset B_{\epsilon}\paren{v_1}\cup B_{\epsilon}\paren{v_2}\cup \paren{B_r-B_{r-\epsilon}}}
\item  Every vertex and edge of $G$ intersects $B_{r-\delta}.$
\item If $e$ is an an edge and $\paren{B_r-B_{r-\epsilon}}\cap e\neq \emptyset,$ then each connected connected component of $\paren{B_r-B_{r-\epsilon}}\cap e$ intersects $\partial B_{r-\epsilon}$ in exactly one point. 
\end{enumerate}Note that if $G$ is $\delta$-thickenable, then it is $\epsilon$-thickenable for all $\epsilon\leq \delta.$ An embedded graph $G\in\mathcal{G}^n$ graph is \textbf{locally thickenable} if its intersection with any open ball is $\delta$-thickenable for some $\delta>0.$ 
\end{Definition} An example of a graph in $B_r$ for which vertex separation is the limiting factor for thickenability is shown in Figure~\ref{fig_beforeThicken}.

\begin{example}
Let $G$ be an embedded graph in $\R^2$ whose vertices are at least distance $D$ apart, and whose edges are straight line segments meeting at angles of at least $\theta.$ Then $G$ is $\min\paren{D,2\sin\paren{\theta/2}}$-thickenable in $\R^2.$
\end{example}

The proof of the following lemma and proposition are included in Appendix~\ref{appendix}.
\begin{restatable}{lemma}{lemmaTwo}
All embedded graphs $G\in\mathcal{G}^n$ are locally thickenable.
\end{restatable}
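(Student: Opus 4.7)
The plan is to fix an arbitrary open ball $B_r \subset \R^n$ (WLOG centered at the origin) and produce a single $\delta > 0$ for which all five clauses in the definition of $\delta$-thickenability hold for $G \cap B_r$. The key enabling fact is that $\overline{G \cap \overline{B_r}}$ is a finite embedded graph by the definition of an embedded graph in $\R^n$, so $G \cap B_r$ involves only finitely many vertices $v_1, \ldots, v_N$ and finitely many (possibly truncated) edges $e_1, \ldots, e_M$. I would produce a positive $\delta_i$ for each clause $i$ by a smoothness-plus-compactness argument and then take $\delta = \min_i \delta_i$.

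Clauses (1)--(3) are intrinsic interior conditions, each of which reduces to a minimum over a finite configuration. Clause (1) follows from positivity of every pairwise vertex distance and of every distance from a vertex to a non-incident edge. Clause (2) holds because each smooth, compact, embedded arc has a positive tubular radius, and the minimum over finitely many edges is positive. Clause (3) is the subtlest interior step: for two edges with no common vertex, positive separation is immediate; for two edges $e, e'$ meeting at a common vertex $v$, the hypothesis that the outward-pointing unit tangents at $v$ are distinct gives a minimum angle $\theta > 0$, so at arc length $s$ from $v$ the two edges are separated by at least $c(\theta)\,s$ for a positive constant $c(\theta)$, by a first-order expansion. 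The asymmetry in clause (3) between the $\epsilon$ inside $B_\epsilon(v_i)$ and the $\epsilon^2$ defining the tube radius is exactly what is needed: as soon as $\epsilon < c(\theta)$, we have $c(\theta)\,\epsilon > \epsilon^2$, and so the $\epsilon^2$-tube around $e$ cannot reach $e'$ outside $B_\epsilon(v)$.

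Clauses (4) and (5) concern the behavior near $\partial B_r$, and I expect clause (5) to be the main obstacle. Clause (4) is handled by choosing $\delta_4$ smaller than the positive distance from each vertex to $\partial B_r$ and than the penetration depth of each edge into the ball. Clause (5) requires that, for every small $\epsilon$, each connected component of $e \cap (B_r \setminus B_{r-\epsilon})$ meet $\partial B_{r-\epsilon}$ in exactly one point; the failure modes are edges tangent to a concentric sphere (components with both endpoints on $\partial B_{r-\epsilon}$) or edges that skim the boundary without reaching $B_{r-\epsilon}$ at all. The natural remedy is to choose $\delta_5$ so that $(r-\delta_5,\, r)$ contains no critical value of the smooth radial function $t \mapsto \abs{e(t)}$ on any edge $e$; then $r-\epsilon$ is a regular value for every $\epsilon < \delta_5$, and each component of $e \cap (B_r \setminus B_{r-\epsilon})$ is radially monotone and therefore hits $\partial B_{r-\epsilon}$ exactly once.

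Taking $\delta = \min(\delta_1, \ldots, \delta_5) > 0$ then gives $\delta$-thickenability of $G \cap B_r$, and since $B_r$ was arbitrary, $G$ is locally thickenable. The technical subtlety to watch in clause (5) is that, in principle, critical values of a smooth function could cluster just below $r$; ruling this out for the concrete edges of the graph will rely on Sard's theorem together with finiteness of the edge set, and it is for this reason that I expect the careful bookkeeping of the boundary behavior, rather than the interior clauses, to absorb most of the technical work deferred to the appendix.
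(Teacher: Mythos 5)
Your treatment of the interior clauses matches the paper's proof in its essential content: the paper likewise disposes of every clause except the third by finiteness (choosing a $\delta_0$ below all the relevant positive minima), and handles the third exactly as you do, by expanding two edges meeting at a vertex to first order in arc length so that distinctness of the outward tangents gives a separation growing linearly in the radius while the tube radius shrinks quadratically; the paper phrases this as $\lim_{\epsilon\to 0}\|e_1(\epsilon)-e_2(\epsilon)\|/\epsilon^2=\infty>1$ and also invokes its shell lemma to see that each adjacent edge meets $\partial B_\epsilon(v)$ in a single point. On the substance the paper actually writes out, you coincide with it.

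The one place your proposal overreaches is clause (5). Sard's theorem gives you that the critical values of the radial function $t\mapsto|e(t)|$ on an edge form a set of measure zero, but a compact measure-zero set can still accumulate at $r$ from below, in which case no interval $(r-\delta_5,r)$ consisting of regular values exists; one can build a smooth arc (using flat functions such as $e^{-1/t^2}(2+\sin(1/t))$) whose radial function has infinitely many local maxima with values increasing to $r$, and for such an edge clause (5) fails for every $\epsilon$. What you actually need is that the critical values do not accumulate at $r$; by compactness of the critical set this can only fail when an edge has a critical point lying on $\partial B_r$ itself, i.e., a tangency to the sphere, so the right fix is to rule out (or perturb away) that degenerate configuration rather than to cite Sard. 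The paper sidesteps this entirely --- it simply asserts that some $\delta_0$ satisfies all clauses but the third --- so your instinct that the boundary clause hides a subtlety is sound, but the tool you reach for does not close it.
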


\begin{restatable}{proposition}{propTwo}
\label{prop2}
Let $X_{\delta,D}\subset \mathcal{G}_r^n$ be the set of $\delta$-thickenable embedded graphs such that the $n-1$ Frenet-Serret curvatures of each edge are bounded above by $D.$
\begin{enumerate}
\item $X_{\delta,D}$ is compact, and equals the disconnected union of sets of constant topological type. 
\item  If $\hat{X}_{\delta,D}\subset \hat{\mathcal{G}}_r^n$ is the same set with the smooth topology, then  $\hat{X}_{\delta,D}$ is homeomorphic to $X_{\delta,D}.$
\end{enumerate}
The corresponding statements are also true for embedded graphs with multiplicity in $\check{\mathcal{G}}_r^n.$ 
\end{restatable}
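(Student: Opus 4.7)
The plan is to use the combinatorial rigidity enforced by $\delta$-thickenability to parametrize graphs in $X_{\delta,D}$ by arc-length parametrizations of their edges, apply Arzelà--Ascoli to get sequential compactness, show that the topological type is locally constant in the local Hausdorff metric, and finally observe that Hausdorff convergence and smooth convergence coincide on $X_{\delta,D}$ once the combinatorial matching has been fixed. I would treat the three conclusions in order.

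For compactness, uniform combinatorial bounds come from the thickenability conditions: (1) forces vertices to be $\delta$-separated, (4) confines them to $\overline{B}_{r-\delta}$, and (3) together with distinct tangent vectors at each vertex bounds the degree, so the number of vertices and edges is bounded by constants $V(\delta,r)$ and $L(\delta,r,n)$. After passing to a subsequence of $\set{G_i} \subset X_{\delta,D}$, all $G_i$ share the same combinatorial type. I would parametrize each edge of $G_i$ by arc length as $\phi_j^{(i)} : \brac{0,L_j^{(i)}} \to \overline{B}_r$, so that $\phi_j^{(i)}$ has unit speed and curvature bounded by $D$; edge lengths are bounded above by $2r$ and, by conditions (2) and (3), below by a positive constant depending on $\delta$. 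A diagonal Arzelà--Ascoli extraction, bootstrapped through the Frenet--Serret ODE, yields a subsequence along which each parametrization converges smoothly to a limit curve, and these assemble into an embedded graph $G$. Each thickenability condition is a non-strict inequality closed under $C^2$ convergence of edges and Hausdorff convergence of vertex sets, so $G \in X_{\delta,D}$, establishing sequential compactness.

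For local constancy of the topological type, fix $G \in X_{\delta,D}$ and form the closed thickening consisting of balls $\overline{B}_{\delta/4}\paren{v}$ around each vertex and tubes $\overline{T}_{\delta^2/16}\paren{e}$ around each edge. Conditions (2) and (3) guarantee that these sets are pairwise disjoint apart from the intended vertex-edge incidences and the boundary collar. If $G' \in X_{\delta,D}$ is within local Hausdorff distance $\eta$ of $G$ for $\eta$ small compared to $\delta$, then the $\delta$-separation of vertices of $G'$ forces exactly one vertex of $G'$ to lie in each vertex ball of $G$, and each edge of $G'$ traces a connected curve through the same sequence of tubes and vertex balls as its counterpart in $G$. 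This produces a bijective combinatorial isomorphism between $G'$ and $G$, so the topological-type function is locally constant and its strata are clopen in $X_{\delta,D}$.

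For the homeomorphism with the smooth topology, continuity of the identity $\hat{X}_{\delta,D} \to X_{\delta,D}$ is immediate because smooth edge convergence implies Hausdorff convergence. For the converse, suppose $G_i \to G$ in the local Hausdorff metric inside $X_{\delta,D}$. The rigidity step above gives a canonical combinatorial matching of $G_i$ with $G$ for all large $i$, under which corresponding edges have Hausdorff converging images and uniformly bounded curvatures; parametrizing by arc length and reapplying the Arzelà--Ascoli-with-Frenet--Serret argument promotes Hausdorff convergence to smooth convergence of each edge, which is exactly convergence in $\hat{X}_{\delta,D}$. The multiplicity case is identical, treating the multiplicity as an auxiliary edge coloring. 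The main obstacle is the rigidity step: without thickenability, an arbitrarily small Hausdorff perturbation could create or destroy edges by pinching a vertex or merging nearly tangent strands, and condition (3) with its quadratic tube radius $\epsilon^2$ is precisely what rules this out, but turning this into a quantitative bound on the allowable $\eta$ requires care.
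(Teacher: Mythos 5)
Your proposal is correct and follows essentially the same route as the paper's appendix: a quantitative rigidity lemma producing a canonical combinatorial isomorphism between Hausdorff-close thickenable graphs (hence local constancy of the topological type), Arzel\`a--Ascoli bootstrapped through the Frenet--Serret equations on each fixed-type stratum, and the homeomorphism of the two topologies deduced from compactness. Two small repairs: the uniform upper bound on edge length is not $2r$ (an edge can wind inside $B_r$) but rather $V_n\paren{r}/V_{n-1}\paren{2\delta}$, obtained by comparing the volume of the embedded $2\delta$-tube to that of the ball; and the closedness of the tubular-neighborhood condition under convergence, which you dispatch in one clause, is the one condition the paper verifies at length via the continuity of the circumradius of distinct triples (the global radius of curvature) along the uniformly convergent arc-length parametrizations.
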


\section{Local Geometric Convergence}
\label{sec_geoConv}

In this section, we propose a notion of convergence for embedded graphs that implies the convergence of averages of a large class of local geometric properties. This is called \textbf{local geometric convergence}, and was developed in analogy with Benjamini-Schramm convergence. In short, we associate to an embedded graph $G\in\mathcal{G}^n$ an empirical probability distribution $\mathcal{P}\paren{G}$ on $\mathcal{G}^n$, called the the \textbf{geometric cloth}. Local geometric convergence is related to weak convergence of geometric cloths.

The definitions of local geometric convergence for embedded graphs in $\mathcal{G}^n$ and embedded graphs with multiplicity in $\check{\mathcal{G}_r^n}$ are nearly identical, as are the proofs of their properties. For brevity, we will  work with $\mathcal{G}^n$ here.

\subsection{Preliminaries: Weak Convergence and the Ergodicity}
\label{sec_prelim}
We will require the following definitions and results.

\subsubsection{Weak Convergence}

\begin{definition}
A sequence of probability measures $\set{\mu_i}$ on a completely regular space $X$ \textbf{converges weakly} to a probability measure $\mu$ if
\s{\lim_{i\rightarrow\infty}\int_{X}f\;d\mu_i=\int_{X}f\;d\mu}
for all bounded continuous functions $f:X\rightarrow \R.$   
\end{definition}

\begin{theorem}[Portmanteau theorem~\cite{1970topsoe}]
Let $X$ be a completely regular topological space, $\mu$ a probability measure, and $\mu_i$ a sequence of probability measures on $Y.$ The following are equivalent:
\begin{enumerate}
\item $\mu_i\rightarrow \mu$ weakly.
\item $\limsup\mu_i\paren{C}\leq \mu\paren{C}$ for all closed subsets $C\subset X.$
\item $\liminf\mu_i\paren{U}\geq\mu\paren{U}$ for all open subsets $U\paren X.$
\item $\mu_i\paren{Z}\rightarrow \mu_i\paren{Z}$ for all $Z\subset X$ with $\mu\paren{\partial Z}=0.$
\end{enumerate}
\end{theorem}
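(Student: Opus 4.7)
The plan is to prove the equivalences in a cycle $(1)\Rightarrow(2)\Leftrightarrow(3)\Rightarrow(4)\Rightarrow(1)$, leveraging complete regularity of $X$ at the one place it matters, namely to approximate indicator functions of closed sets by bounded continuous ones.

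First I would dispatch $(2)\Leftrightarrow(3)$ by taking complements: if $C$ is closed then $U=X\setminus C$ is open, $\mu_i(C)=1-\mu_i(U)$ and $\mu(C)=1-\mu(U)$, so $\limsup\mu_i(C)\le\mu(C)$ if and only if $\liminf\mu_i(U)\ge\mu(U)$. Next, $(2)\wedge(3)\Rightarrow(4)$: given $Z\subset X$ with $\mu(\partial Z)=0$, the interior $Z^\circ$ is open and the closure $\bar Z$ is closed, with $\mu(\bar Z)=\mu(Z^\circ)=\mu(Z)$. Then
\[
\mu(Z)=\mu(Z^\circ)\le\liminf\mu_i(Z^\circ)\le\liminf\mu_i(Z)\le\limsup\mu_i(Z)\le\limsup\mu_i(\bar Z)\le\mu(\bar Z)=\mu(Z),
\]
so $\mu_i(Z)\to\mu(Z)$.

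For $(4)\Rightarrow(1)$ I would use layer-cake. Let $f:X\to\R$ be bounded and continuous; by translating and scaling we may assume $0\le f<1$. Write $\int f\,d\mu=\int_0^1\mu(\{f>t\})\,dt$, and similarly for $\mu_i$. The level sets $\{f=t\}$ are pairwise disjoint closed sets; since $\mu$ is a probability measure, the set $A=\{t\in[0,1]:\mu(\{f=t\})>0\}$ is countable, so for almost every $t\in[0,1]$ the set $\{f>t\}$ satisfies $\mu(\partial\{f>t\})\le\mu(\{f=t\})=0$ (using continuity of $f$ to identify the boundary with a subset of $\{f=t\}$). Hypothesis (4) then gives $\mu_i(\{f>t\})\to\mu(\{f>t\})$ for a.e.\ $t$, and bounded convergence applied to the layer-cake integrals yields $\int f\,d\mu_i\to\int f\,d\mu$.

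The main obstacle is $(1)\Rightarrow(2)$, because in a completely regular space I only have Urysohn-type functions separating points from closed sets, not arbitrary closed sets from closed sets. The approach I would take is this: fix a closed set $C$ and, for each open neighborhood $U\supset C$, use complete regularity (in its functional form) to produce a bounded continuous $g_U:X\to[0,1]$ with $g_U\equiv 1$ on $C$ and $g_U\equiv 0$ off $U$. Then $\mathbf 1_C\le g_U\le\mathbf 1_U$, so
\[
\limsup_i\mu_i(C)\le\limsup_i\int g_U\,d\mu_i=\int g_U\,d\mu\le\mu(U).
\]
Infimizing over $U\supset C$ and invoking outer regularity of $\mu$ with respect to open sets (the measures arising here are Radon, since the ambient spaces of interest in the paper are Polish/Radon) gives $\limsup\mu_i(C)\le\mu(C)$. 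The delicate point—and the one I expect to require the most care when filling in details—is justifying the existence of such separating functions $g_U$ for an arbitrary open neighborhood $U$ of a closed set $C$; in a general completely regular space this is not automatic, but it holds when $C$ is compact (standard) or, in Topsøe's treatment cited in the statement, under mild regularity hypotheses on the probability measures that hold in all settings used later in the paper.
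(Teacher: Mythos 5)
The paper itself offers no proof of this statement — it is imported verbatim from Tops{\o}e — so your proposal can only be judged on its own terms. The cycle is the standard one, and three of its four legs are correct as written: $(2)\Leftrightarrow(3)$ by complementation, $(2)\wedge(3)\Rightarrow(4)$ by squeezing $Z$ between $Z^\circ$ and $\bar Z$, and $(4)\Rightarrow(1)$ by the layer-cake formula together with the observations that $\partial\{f>t\}\subset\{f=t\}$ for continuous $f$ and that only countably many level sets can carry positive $\mu$-mass.

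The genuine gap is in $(1)\Rightarrow(2)$, and it is exactly the one you flag but do not close. Complete regularity separates a \emph{point} from a closed set not containing it; it does not produce, for an arbitrary closed $C$ and open $U\supset C$, a continuous $g_U$ equal to $1$ on $C$ and $0$ off $U$ — that is Urysohn's lemma and requires normality (or compactness of $C$). Appealing to outer regularity of $\mu$ does not repair this: outer regularity is a property of the measure and cannot conjure the separating functions, which is a purely topological demand. The standard way to prove $(1)\Rightarrow(2)$ at this level of generality is to drop the neighborhoods $U$ entirely and consider the family $\mathcal{F}=\{g\in C_b(X): \mathbf{1}_C\le g\le 1\}$, which is nonempty, downward directed, and has pointwise infimum exactly $\mathbf{1}_C$ by complete regularity (for each $x\notin C$ there is $g\in\mathcal{F}$ with $g(x)=0$). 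Every $g\in\mathcal{F}$ gives $\limsup_i\mu_i(C)\le\int g\,d\mu$, and if $\mu$ is $\tau$-smooth — Tops{\o}e's standing hypothesis, silently dropped in the paper's transcription — the infimum of $\int g\,d\mu$ over $\mathcal{F}$ equals $\mu(C)$. Without some such hypothesis the implication can fail, so your instinct that extra regularity is needed is right, but the mechanism you propose (outer regularity supplying the $g_U$) is not the one that works. In every space where the paper actually applies the theorem ($\mathcal{G}^n_r$, $\hat{\mathcal{G}}^n_r$, and so on, all metrizable), the elementary fix $g_\epsilon(x)=\max(0,1-d(x,C)/\epsilon)$ combined with $\mu(C)=\lim_{\epsilon\rightarrow 0}\mu(C_\epsilon)$ closes the argument, so the gap is harmless for the paper's purposes but real for the theorem as stated.
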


Note that all metric spaces are completely regular, and that the hypotheses do not require $\mu$ to be a Radon measure.

\begin{Definition}
A collection of probability measures $M$ on a space $X$ is \textbf{tight} if for all $\epsilon>0$ there is a compact set $K_{\epsilon}$ so that for all $\mu\in M$
\s{\mu\paren{K_{\epsilon}}>1-\epsilon}
\end{Definition}

\begin{theorem}(Prokhorov's Theorem)
The closure of a tight collection of probability measures on a separable metric space is compact in the weak topology.
\end{theorem}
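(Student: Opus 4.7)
The plan is to prove the sequential version of the statement: given any sequence $\set{\mu_k}$ drawn from a tight collection $M$ on a separable metric space $X$, I will extract a subsequence that converges weakly to some probability measure $\mu$ on $X$. Because the weak topology on the space of Borel probability measures on a separable metric space is itself metrizable (via the L\'evy--Prokhorov metric), sequential compactness of the closure $\bar{M}$ is equivalent to the topological compactness claimed.

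First, I would pass to a compactification. Since $X$ is separable and metric, the Urysohn metrization theorem embeds $X$ homeomorphically into the Hilbert cube; let $\hat{X}$ be the closure of its image, a compact metric space in which $X$ sits as a Borel subset. Each $\mu_k$ extends to a Borel probability measure $\hat\mu_k$ on $\hat X$ by assigning mass zero to $\hat X\setminus X.$ The space of Radon probability measures on the compact metric space $\hat X$ is sequentially compact in its weak topology: this is the Banach--Alaoglu theorem applied to the unit ball of $C\paren{\hat X}^*,$ together with separability of $C\paren{\hat X}.$ So along some subsequence indexed by $\set{k_l},$ $\hat\mu_{k_l}$ converges weakly to a Borel probability measure $\hat\mu$ on $\hat X.$

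Second, I would use tightness to force $\hat\mu$ to concentrate on $X.$ Given $\epsilon>0,$ tightness furnishes a compact $K\subset X$ with $\mu_k\paren{K}>1-\epsilon$ uniformly in $k.$ This $K$ is also compact, hence closed, in $\hat X,$ so the Portmanteau theorem on $\hat X$ gives
\s{\hat\mu\paren{K}\geq \limsup_{l\rightarrow\infty}\hat\mu_{k_l}\paren{K}\geq 1-\epsilon.}
Since $\epsilon$ is arbitrary, $\hat\mu\paren{X}=1,$ and so $\hat\mu$ restricts to a probability measure $\mu$ on $X.$ To conclude $\mu_{k_l}\rightarrow\mu$ weakly on $X,$ for any closed $C\subset X$ let $\widebar{C}$ be its closure in $\hat X$; since $\hat\mu$ and each $\hat\mu_{k_l}$ are supported on $X,$ we have $\hat\mu_{k_l}\paren{\widebar{C}}=\mu_{k_l}\paren{C}$ and $\hat\mu\paren{\widebar{C}}=\mu\paren{C}.$ Applying Portmanteau on $\hat X$ to the closed set $\widebar{C}$ yields $\limsup_l \mu_{k_l}\paren{C}\leq \mu\paren{C},$ and a second application of Portmanteau, now on $X,$ upgrades this to weak convergence.

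The main obstacle is the second step: without tightness, the weak limit $\hat\mu$ on the compactification could place positive mass on the added ``boundary'' $\hat X\setminus X,$ producing a sub-probability measure on $X$ rather than a probability measure. Tightness is exactly the hypothesis that prevents this, and the Portmanteau theorem is the natural tool for converting the uniform lower bounds $\mu_k\paren{K}>1-\epsilon$ into a matching lower bound for the candidate limit. A minor subtlety is verifying that weak convergence on $\hat X$ restricts cleanly to weak convergence on $X$: bounded continuous functions on $X$ need not extend continuously to $\hat X,$ which is why the Portmanteau criterion on closed sets (where the correspondence $C\leftrightarrow\widebar{C}$ is transparent) is the preferred vehicle for the final step.
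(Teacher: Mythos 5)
The paper quotes Prokhorov's theorem as a classical result and supplies no proof of its own, so there is nothing internal to compare against; your argument is the standard compactification proof (as in Billingsley or Parthasarathy) and is essentially correct. You embed $X$ in the Hilbert cube, use Banach--Alaoglu plus separability of $C(\hat{X})$ to extract a weak-$*$ convergent subsequence on the compact closure $\hat{X}$, and then use tightness together with the Portmanteau theorem both to force the limit to concentrate on $X$ and to transfer the convergence back down to $X$; the reduction from topological to sequential compactness via metrizability of the weak topology (L\'evy--Prokhorov) is also the right move. One small imprecision: a general separable metric space need not be a Borel subset of its closure $\hat{X}$ in the Hilbert cube (that holds when $X$ is Polish), so you cannot literally extend $\mu_k$ by ``assigning mass zero to $\hat{X}\setminus X$''; instead define $\hat{\mu}_k(A)=\mu_k(A\cap X)$ for Borel $A\subset\hat{X}$, which is legitimate because Borel subsets of $X$ are exactly the traces of Borel subsets of $\hat{X}$. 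Similarly, the limit $\hat{\mu}$ is shown to concentrate not on $X$ itself but on the $\sigma$-compact set $\bigcup_{n}K_{1/n}\subset X$, which is Borel in $\hat{X}$ and is all you need to define the restriction $\mu$ and to justify the identities $\hat{\mu}(\overline{C})=\mu(C)$ in your final step. With that adjustment the proof is complete.
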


\subsection{Uniformly Separating  Sequences}
\label{sec_USS}
Let $s:\hat{\mathcal{G}}^{r}_n\rightarrow\mathcal{G}^{r}_n$ be the identity map. It is bijective and continuous, and is a homeomorphism when restricted to the set $\hat{X}_m$ of $1/m$-thickenable graphs whose Frenet-Serret curvatures bounded above in magnitude by $m.$ Let $X_m=s\paren{\hat{X}_m}.$ 

\begin{lemma}
$s^{-1}$ is a Borel function.
\end{lemma}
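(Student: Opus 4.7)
The plan is to present $s^{-1}$ as a map that is continuous on each piece of a countable Borel cover of $\mathcal{G}_r^n$, so that Borel measurability follows by gluing. Proposition~\ref{prop2} supplies the natural candidate cover: each $X_m$ is compact in $\mathcal{G}_r^n$, hence closed in the metrizable ambient space, hence Borel; and $s|_{\hat{X}_m}\colon \hat{X}_m \to X_m$ is a homeomorphism. The continuous inverse of this restriction, composed with the tautological inclusion $\hat{X}_m \hookrightarrow \hat{\mathcal{G}}_r^n$ (continuous by the defining property of the final topology on $\hat{\mathcal{G}}_r^n$), exhibits $s^{-1}|_{X_m}\colon X_m \to \hat{\mathcal{G}}_r^n$ as a continuous, and therefore Borel, map.

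To assemble these pieces into a global Borel map, I would disjointify the nested family $X_1 \subset X_2 \subset \cdots$ via $Y_m = X_m \setminus X_{m-1}$ (with $X_0 = \emptyset$), obtaining a countable Borel partition of $\bigcup_m X_m$ on each piece of which $s^{-1}$ is Borel. Since Borel measurability on each piece of a countable Borel partition implies Borel measurability on the union, $s^{-1}$ is Borel on $\bigcup_m X_m$.

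The main obstacle I foresee is verifying the covering $\bigcup_m X_m = \mathcal{G}_r^n$, i.e., that every $G \in \mathcal{G}_r^n$ is $1/m$-thickenable with Frenet--Serret curvatures bounded by $m$ for some $m$. The preceding Lemma gives local thickenability, but condition~(4) of the $\delta$-thickenability definition is a genuinely global constraint in the open ball $B_r$ and can be delicate for graphs whose cells accumulate at $\partial B_r$. I would exhaust $B_r$ by the subballs $B_{r-1/k}$, use local finiteness and edge-wise smoothness to produce a uniform thickenability constant and a curvature bound on each such portion of $G$, and then choose $m$ large enough to absorb both constants together with the boundary conditions (4) and (5). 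Should any graphs evade this estimate, they would still form a Borel subset on which a further exhaustion argument or a direct appeal to the continuity of $s$ would yield Borel structure on $s^{-1}$; but I expect the smoothness and local thickenability arguments of Section~\ref{sec_graphTop} to rule out such exceptional cases entirely.
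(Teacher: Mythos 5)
Your argument is essentially the paper's: decompose the space into the countable family of compact (hence closed, hence Borel) pieces $X_m$ on which $s$ restricts to a homeomorphism, and conclude by a countable union --- the paper writes $s\paren{B}=\bigcup_{m\in\N}X_m\cap s\paren{\hat{X}_m\cap B}$ rather than disjointifying into $Y_m=X_m\setminus X_{m-1}$, but this is the same gluing argument. The covering $\bigcup_m X_m=\mathcal{G}_r^n$ that you flag as the main obstacle is used silently by the paper (its first equality $B=\bigcup_{m}\paren{\hat{X}_m\cap B}$ presupposes it), so your caution is warranted but does not change the route.
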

\begin{proof}

If $B$ be a Borel set of $S$ then
\s{s\paren{B}=s\paren{\bigcup_{m\in\N}{\hat{X}_m\cap B}}=\bigcup_{m\in\N}s\paren{\hat{X}_m\cap B}=\bigcup_{m\in\N}X_m\cap s\paren{\hat{X_m}\cap B}}
which is a Borel set, because the countable union and homeomorphic image of Borel sets are Borel.
\end{proof}The previous lemma implies that Borel measures on $\mathcal{G}^{r}_n$ pull back to Borel measures on $\hat{\mathcal{G}}^{r}_n,$ so we can make the following definition:  
\begin{Definition}
Let $\mu$ be a probability measure on $\mathcal{G}^n_r.$ The \textbf{induced smooth measure} $s^*\mu$ on $\hat{\mathcal{G}}^{r}_n$ is the pushforward of $\mu$ by $s^{-1}.$
\end{Definition}

\begin{Definition}
\label{defn_unifSep}
A sequence of probability measures $\set{\mu_i}$ on  $\mathcal{G}_{r}^n$ is \textbf{uniformly separating} for all $\epsilon>0$ there is an $M$ so that each $\mu_i$ satisfies
\s{\mu_i\paren{X_M}>1-\epsilon}
Similarly, a sequence of probability measures $\set{\nu_i}$ on $\mathcal{G}^n$ is uniformly separating if $\set{{p_r}_*\nu_i}$ is for each $r\in\N.$ 
\end{Definition} $X_M$ is compact, so a uniformly separating sequence on $\mathcal{G}_{r}^n$  is tight.
\begin{proposition}
\label{unifProp}
Let $\set{\mu_i}$ be a uniformly separating sequence of probability measures on $\mathcal{G}_{r}^n.$ $\mu_i$ converges weakly to $\mu$ if and only if $s^*\mu_i$ converge weakly to $s^*\mu.$
\end{proposition}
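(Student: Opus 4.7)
The plan is to prove the two directions separately, with the reverse direction being immediate from continuity of $s$ and the forward direction requiring the uniform separating hypothesis together with a Tietze extension argument.

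For the reverse implication, assume $s^*\mu_i \to s^*\mu$ weakly. Since $s$ is the identity on the underlying set and the pushforward identity gives $\mu_i = s_*(s^*\mu_i)$, we have $\int f\,d\mu_i = \int f\circ s \,d(s^*\mu_i)$ for any bounded continuous $f:\mathcal{G}^n_r \to \R$. The function $f\circ s:\hat{\mathcal{G}}^n_r\to\R$ is bounded and continuous because $s$ is continuous and $\hat{\mathcal{G}}^n_r$ is finer than $\mathcal{G}^n_r$, so weak convergence of $s^*\mu_i$ directly gives $\int f\,d\mu_i\to \int f\,d\mu$.

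The forward direction is the harder one because $s^{-1}$ is only Borel, not continuous, so a bounded continuous $\hat{f}$ on $\hat{\mathcal{G}}^n_r$ only pulls back to a bounded Borel function $\hat{f}\circ s^{-1}$ on $\mathcal{G}^n_r$, which weak convergence of $\mu_i$ cannot see directly. The key is that by Proposition~\ref{prop2}, $s$ restricts to a homeomorphism $\hat{X}_M\to X_M$, so the restriction $(\hat{f}\circ s^{-1})|_{X_M}$ is continuous on the compact set $X_M$. Fix $\hat{f}$ with $|\hat{f}|\leq C$ and $\epsilon>0$. Apply uniform separation to get an $M$ with $\mu_i(X_M)>1-\epsilon/(4C)$ for every $i$; since $X_M$ is closed, Portmanteau applied to the open complement $X_M^c$ yields $\mu(X_M^c)\leq \liminf \mu_i(X_M^c)\leq \epsilon/(4C)$ as well.

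Now use Tietze extension, valid because $\mathcal{G}^n_r$ is metrizable and hence normal, to extend $(\hat{f}\circ s^{-1})|_{X_M}$ to a continuous function $g:\mathcal{G}^n_r\to \R$ with $|g|\leq C$. Writing $\int \hat{f}\,d(s^*\mu_i)=\int \hat{f}\circ s^{-1}\,d\mu_i$ and comparing with $\int g\,d\mu_i$, the integrands agree on $X_M$, so the difference is bounded by $2C\mu_i(X_M^c)<\epsilon/2$, and identically $|\int \hat{f}\,d(s^*\mu)-\int g\,d\mu|<\epsilon/2$. Since $g$ is bounded and continuous on $\mathcal{G}^n_r$, weak convergence of $\mu_i$ gives $\int g\,d\mu_i\to \int g\,d\mu$, and a triangle inequality assembles the pieces into $\int \hat{f}\,d(s^*\mu_i)\to \int \hat{f}\,d(s^*\mu)$.

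The main obstacle is the mismatch in topologies: $\hat{\mathcal{G}}^n_r$ is strictly finer than $\mathcal{G}^n_r$, so pull-backs by $s^{-1}$ leave the class of continuous functions. Uniform separation is precisely the device that resolves this, by confining almost all mass to the compact sets $X_M$ on which the two topologies coincide and continuous functions can be Tietze-extended.
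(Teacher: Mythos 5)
Your proof is correct, and it reaches the conclusion by a route that is parallel to but technically different from the paper's. Both arguments rest on the same two pillars: uniform separation confines all but $\epsilon$ of the mass of every $\mu_i$ (and, via Portmanteau applied to the closed set $X_M$, of $\mu$) to the compact set $X_M$, and $s$ restricts to a homeomorphism $\hat{X}_M\rightarrow X_M$, so the two topologies agree where the mass lives. The divergence is in how that agreement is exploited. The paper verifies the open-set Portmanteau criterion directly: given an open $U\subset\hat{\mathcal{G}}^n_r$, it uses the homeomorphism to write $X_M\cap s\paren{U}=X_M\cap V$ for some open $V\subset\mathcal{G}^n_r$ and then runs a chain of inequalities to get $\liminf_i\mu_i\paren{s\paren{U}}\geq\mu\paren{s\paren{U}}-5\epsilon$. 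You instead work at the level of test functions, restricting $\hat{f}\circ s^{-1}$ to $X_M$ (where it is continuous), Tietze-extending it to a bounded continuous $g$ on the metrizable space $\mathcal{G}^n_r$, and controlling the discrepancy on $X_M^c$ by $2C\mu_i\paren{X_M^c}$. Your version buys a slightly cleaner bookkeeping (a single $2C\mu_i\paren{X_M^c}$ error term rather than the paper's cascade of $\epsilon$'s) at the cost of invoking Tietze extension; the paper's open-set version is its set-theoretic shadow, since extending an open subset of the subspace $X_M$ to an open subset of $\mathcal{G}^n_r$ is immediate from the definition of the subspace topology. Both correctly note that the reverse direction is free because the smooth topology is finer, and your remark that $\hat{f}\circ s^{-1}$ is a priori only Borel, so that weak convergence of $\mu_i$ cannot see it without the localization to $X_M$, is exactly the right diagnosis of why uniform separation is needed.
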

\begin{proof}
The smooth topology is finer than that of $\mathcal{G}_{r}^n,$ so weak convergence of  $s^*\mu_i$ to $s^*\mu$ implies weak convergence of $\mu_i$ to $\mu,$ even without the requirement of uniform separation. 

For the other direction, assume that $\mu_i\rightarrow \mu$ weakly and let $U$ be an open set of $\hat{\mathcal{G}}^{r}_n.$ We would like to show that 
\s{\liminf_i \mu_i\paren{s\paren{U}}\geq \mu\paren{s\paren{U}}} Let $\epsilon>0.$ $\set{\mu_i}$ is uniformly separating, so we can find an $M$ so that $\mu_i\paren{X_M}>1-\epsilon$ for all $i.$ $X_M$ is closed so $\mu\paren{X_M}>1-\epsilon,$ as well. $s$ is a homeomorphism when restricted to $\hat{X}_{M},$ so $s\paren{U}\cap X_M=s\paren{U\cap\hat{X}_{M}}$ is open in $X_{M}.$ It follows that there is an open set $V$ of  $\mathcal{G}_{r}^n$ with $X_M\cap s\paren{U}=X_{M}\cap V.$ Choose $I$ so that $\mu_i\paren{V}>\mu\paren{V}-\epsilon$ for all $i>I.$ Then
\begin{align*}
\mu_i\paren{s\paren{U}}&\\
&\geq \mu_i\paren{X_{M}\cap s\paren{U}}-\epsilon\\
&= \mu_i\paren{X_M\cap V}-\epsilon\\
&\geq \mu_i\paren{V}-2\epsilon\\
&\geq\mu\paren{V}-3\epsilon\\
& \geq \mu\paren{X_{M}\cap s\paren{U}}-4\epsilon\\
& \geq \mu\paren{s\paren{U}}-5\epsilon
\end{align*}
So 
\s{\liminf_i \mu_i\paren{s\paren{U}}\geq \mu\paren{s\paren{U}}}
and $s^*\mu_i\rightarrow s^*\mu$ weakly, as desired. 
\end{proof}

\subsection{The Geometric Cloth}
\label{sec_geoCloth}
 $\R^n$ acts on $\mathcal{G}^{n}$ by translation. Let $G\in\mathcal{G}^n$ be an embedded graph and let $\phi^G:\R^n\rightarrow\mathcal{G}^{n}$ be the map sending $x$ to the graph $xG.$ Also, let $\phi_r^G:\R^n\rightarrow\mathcal{G}_r^n$ be the composition of $p_r\circ\phi^G,$ as shown in Figure~\ref{fig:geometric_cloth}.

\begin{figure}
\center
\includegraphics[width=.6\textwidth]{/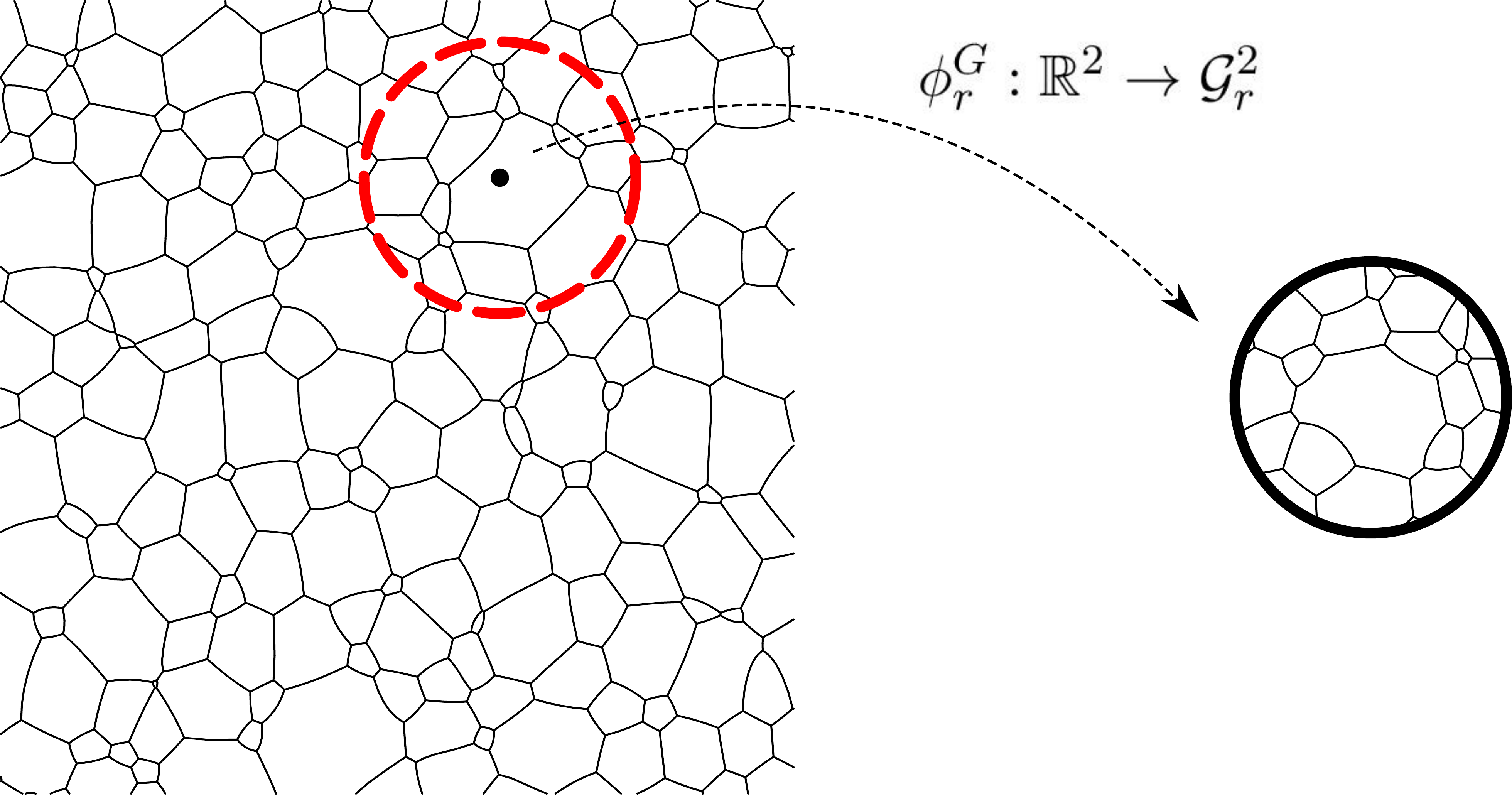}
 \caption{\label{fig:geometric_cloth} An embedded graph $G\in\mathcal{G}^2$ induces a ``window map'' $\phi_r^G:\R^2\rightarrow\mathcal{G}_r^2.$}
\end{figure} 

\begin{Definition}
\label{def_geo_cloth}
Let $G\in\mathcal{G}^{n}.$ If $K$ is a bounded subset of $\R^n,$ define the \textbf{geometric cloth of $G$ in K}, let $P\paren{G,K}$ be the pushforward of the normalized Lebesgue measure on $K$ via $\psi_G.$ If $G$ is infinite, and the sequence $\set{P\paren{G,B_i}}_{i\in\N}$ converges weakly to a non-zero limit distribution $\mathcal{P}\paren{G},$ we call $\mathcal{P}\paren{G}$ the \textbf{geometric cloth} of $G.$ Otherwise, if $G$ is finite, we refer to the geometric cloth of $G$ in its convex hull its geometric cloth $\mathcal{P}\paren{G}.$
\end{Definition} That is, $P\paren{G,K}$ is the probability distribution of graphs embedded in balls of radius $r$ centered at points of $K.$ Note that the definition of geometric cloth, and none of the following definitions in this section, depend on the choice of origin in $\R^n$ because the percentage overlap between the balls of radius $r$ centered at any two points in $\R^n$ goes to $1$ as $r\rightarrow\infty.$

 We are interested in infinite graphs because the conjectures about curvature flow on graphs in Section~\ref{sec_conj} are most natural in that context. An infinite graph has smooth geometric cloth if its local geometric properties have well-defined averages over balls:
\begin{Definition}
If $G$ is infinite and the induced smooth measures $\hat{P}_r\paren{G,B_i}=s^*{p_r}_*P\paren{G,B_i}$ on $\hat{\mathcal{G}}_r^n$ also converge weakly to limit distributions, we say that $G$ has a \textbf{smooth geometric cloth} (for example, if $P\paren{G,B_i}$ is a uniformly separating sequence). Finite graphs always have smooth geometric cloth.
\end{Definition}

An equivalent and perhaps more intuitive way to define the geometric cloth is via local probability distributions:
\begin{proposition}
\label{localProp}
An infinite embedded graph $G$ has a geometric cloth if and only if the probability distributions  $P_r\paren{G,B_i}$ converge weakly to non-zero limit distributions $\mathcal{P}_r\paren{G}$ as $i\rightarrow\infty$ for all $r\in\N.$ 
\end{proposition}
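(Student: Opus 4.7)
The plan is to prove the two directions separately; the forward direction is straightforward from continuity of the projection maps $p_r$, while the reverse direction requires a tightness plus inverse-limit argument.

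For the forward direction, I would simply observe that $p_r : \mathcal{G}^n \to \mathcal{G}_r^n$ is continuous by construction of the inverse limit topology. Hence pushforward along $p_r$ is weakly continuous, and if $P(G,B_i) \to \mathcal{P}(G)$ weakly on $\mathcal{G}^n$, then $P_r(G,B_i) = (p_r)_* P(G,B_i)$ converges weakly to $\mathcal{P}_r(G) := (p_r)_* \mathcal{P}(G)$. Since $\mathcal{P}(G)$ is a non-zero probability measure, so is $\mathcal{P}_r(G)$.

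For the reverse direction, assume $P_r(G,B_i) \to \mathcal{P}_r(G)$ weakly on $\mathcal{G}_r^n$ for every $r$. First I would check consistency: by continuity of $p_{s,r}$ we have $(p_{s,r})_* \mathcal{P}_s(G) = \mathcal{P}_r(G)$, so $\{\mathcal{P}_r(G)\}$ is a compatible family. Next I would establish tightness of $\{P(G,B_i)\}_i$ on $\mathcal{G}^n$. Since each $\mathcal{G}_r^n$ is a separable metric space and $P_r(G,B_i) \to \mathcal{P}_r(G)$ weakly, Prokhorov's theorem yields compact sets $K_{r,\epsilon} \subset \mathcal{G}_r^n$ such that $P_r(G,B_i)(K_{r,\epsilon}) > 1 - \epsilon/2^r$ for all $i$. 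The intersection $K_\epsilon := \bigcap_r p_r^{-1}(K_{r,\epsilon})$ is closed in $\mathcal{G}^n$ and, as a closed subset of the inverse limit $\varprojlim K_{r,\epsilon}$ (which is compact by Tychonoff), is itself compact. A union bound gives $P(G,B_i)(K_\epsilon) \geq 1 - \epsilon$ for all $i$, so $\{P(G,B_i)\}$ is tight.

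Finally, I would finish via a subsequence argument. By Prokhorov, every subsequence of $\{P(G,B_i)\}$ has a further weakly convergent subsubsequence with some limit $\mu$ on $\mathcal{G}^n$. Continuity of $p_r$ forces $(p_r)_* \mu = \mathcal{P}_r(G)$ for all $r$, and since $\mathcal{G}^n$ is separable metric (as a countable inverse limit of separable metric spaces), a Borel probability measure on it is uniquely determined by its pushforwards under $\{p_r\}_{r \in \N}$ (the cylinder sets generate the Borel $\sigma$-algebra, and a standard $\pi$-$\lambda$ argument applies). Hence every subsequential limit equals the same measure $\mathcal{P}(G)$, so the whole sequence $P(G,B_i)$ converges weakly to $\mathcal{P}(G)$, which is a probability measure and thus non-zero.

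The main obstacle is the reverse direction, specifically promoting coordinate-wise weak convergence to weak convergence on the inverse limit; this hinges on the tightness argument above and on the fact that cylinder functions separate points (and hence measures) on $\mathcal{G}^n$. If one were working with a less well-behaved inverse limit this step could fail, but here the separability and metrizability of each $\mathcal{G}_r^n$ make Prokhorov and the uniqueness of the consistent extension available.
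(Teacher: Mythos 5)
Your forward direction matches the paper's (continuity of $p_r$, hence weak continuity of ${p_r}_*$). For the converse you take a genuinely different route, and it contains a gap. The paper constructs the limit measure on $\mathcal{G}^n$ from the consistent family $\set{\mathcal{P}_r\paren{G}}$ by the Caratheodory extension theorem applied to the $\pi$-system of cylinder sets $p_r^{-1}\paren{B}$, and then upgrades set-wise convergence on that $\pi$-system to weak convergence using the fact that cylinder sets form a countable basis of the second-countable inverse-limit topology; no tightness is used anywhere. You instead run a tightness-plus-subsequence argument, and the tightness step is the problem: you invoke Prokhorov's theorem in the converse direction (``a weakly convergent sequence on a separable metric space is tight'') to produce compact sets $K_{r,\epsilon}$ with $P_r\paren{G,B_i}\paren{K_{r,\epsilon}}>1-\epsilon/2^r$ uniformly in $i$. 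That converse requires the space to be Polish. The paper establishes only that $\mathcal{G}_r^n$ is separable and states Prokhorov's theorem only in the forward direction (tight implies relatively compact). Completeness of $\mathcal{G}_r^n$ in the local Hausdorff metric is doubtful: Cauchy sequences of embedded graphs can converge in the hyperspace of compact subsets to limits that are not embedded graphs (colliding vertices, merging edges, unbounded curvature or edge count), so $\mathcal{G}_r^n$ is not closed in the complete hyperspace, and on a non-Polish separable metric space a weakly convergent sequence of Borel probability measures need not be uniformly tight.

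The rest of your argument is sound: the consistency relation ${p_{s,r}}_*\mathcal{P}_s\paren{G}=\mathcal{P}_r\paren{G}$, the compactness of $\bigcap_r p_r^{-1}\paren{K_{r,\epsilon}}$ as a closed subset of a product of compacta, the union bound, and the identification of all subsequential limits through their marginals (cylinder sets do generate the Borel $\sigma$-algebra, since they form a countable basis). To repair the proof you would either need to establish uniform tightness of the marginal sequences directly --- for instance by exhibiting explicit compact sets such as the sets $X_{\delta,D}$ of $\delta$-thickenable graphs with bounded curvature carrying most of the mass, which is essentially the ``uniformly separating'' hypothesis the paper imposes elsewhere but deliberately avoids in this proposition --- or abandon tightness and argue as the paper does, via convergence on a convergence-determining $\pi$-system of cylinder sets with null boundary.
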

\begin{proof}
Recall that the topology on $\mathcal{G}^{n}$ is the initial topology with respect to the maps $p_r:\mathcal{G}^{n}\rightarrow \mathcal{G}_r^{n}$ for $r\in\N.$ For $r\leq s,$ let $p_{s,r}:\mathcal{G}_s^{n}\rightarrow \mathcal{G}_r^{n}$ be the natural map given by intersecting a graph in $B_s$ with $B_r.$ $\mathcal{G}^{n}$ is the inverse limit with respect to the maps $p_{s,r}.$

Weak convergence of $\mathcal{P}\paren{G,B_i}$ to a limit clearly implies weak convergence of the local distributions $\mathcal{P}_r\paren{G,B_i}.$

Conversely, suppose that $P_r\paren{G,B_i}$ converges weakly to $\mu^r$ for each $r\in\N.$ Note that $\mu_r={p_{s,r}}_*\paren{\mu_s}$ for any $s>r.$ Let $\mathcal{B}\paren{\mathcal{G}_r^n}$ be the Borel $\sigma$-algebra of $\mathcal{G}_r^n.$ The collection of sets 
\s{\mathcal{A}=\bigcup_{r\in\N}\set{p_r^{-1}\paren{B}: B\in \mathcal{B}\paren{\mathcal{G}_r^{n}}}}
is a semi-ring of Borel sets of $\mathcal{G}^{n}$ generating its topology. Define a pre-measure $\nu_0$ on $\mathcal{A}$ by $\nu_0\paren{B}=\mu^r\paren{p_r\paren{B}}$ if $B=p_r^{-1}\paren{B_0}$ for some $B_0\in\mathcal{B}.$ The consistency of the measures $\mu^r$ and $\mu^s$ implies that this is well-defined. By the Caratheodory extension theorem, $\nu_0$ extends to a unique probability measure $\mu$ on $\mathcal{G}^{n}.$ Weak convergence in a $\pi$-system of sets implies weak convergence with respect to the $\sigma$-algebra it generates~\cite{1999billingsley}, so $\set{P\paren{G,B_i}}$ converges weakly to $\mu.$ 
\end{proof}
 
The importance of the geometric cloth is that its local geometric properties have well-defined averages over balls in the following sense:

\begin{Definition}
Let $G\in\mathcal{G}^n$ be an embedded graph, $f$ be a local geometric property, and let $\set{F_i}$ be an increasing sequence of compact sets whose union is $R^n.$ $f$ has a \textbf{well-defined average} $\bar{f}$ over $\set{F_i}$ if
\s{\bar{f}=\lim_{i\rightarrow\infty}\frac{1}{\abs{F_i}}\int_{F_i}f\circ \psi_G\;d\mu}
where $\mu$ is the standard Lebesgue measure on $\R^n.$ If the average of $f$ over the sequence of balls centered at a point in $\R^n$ exists, is simply referred to as the \textbf{well-defined average} of $f$ for $G,$ $\widebar{f_G}.$
\end{Definition} If $G$ has a smooth geometric cloth, and $f$ is a local geometric property, then $f$ has a well-defined $\widebar{f_G}$ and
\s{\widebar{f_G}=\int_{\R^n}f\;d\mathcal{P}\paren{G}}. The converse is true for graphs with smooth geometric cloth for which $\set{P_r\paren{G,B_i}}$ is uniformly separating:
\begin{proposition}
An embedded graph $G\in\mathcal{G}^n$ for which $P\paren{G,B_i}$ is a uniformly separating sequence has a smooth geometric cloth if and only if the averages of local geometric properties over the sequence $\set{B_r}$ converge as $r\rightarrow\infty.$
\end{proposition}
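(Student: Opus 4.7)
The plan is to reduce the statement to a per-radius claim: by the definition of smooth geometric cloth and Proposition~\ref{localProp}, $G$ has a smooth geometric cloth if and only if the induced smooth measures $\hat{P}_r\paren{G,B_i} = s^*{p_r}_*P\paren{G,B_i}$ converge weakly in $\hat{\mathcal{G}}_r^n$ for every $r\in\N$. Unfolding the definition of a local geometric property $f=\hat{f}\circ s^{-1}\circ p_r$ with $\hat{f}:\hat{\mathcal{G}}_r^n\rightarrow\R$ bounded and continuous,
\s{\frac{1}{\abs{B_i}}\int_{B_i}f\circ \psi_G\;d\mu = \int_{\mathcal{G}^n}f\;dP\paren{G,B_i} = \int_{\hat{\mathcal{G}}_r^n}\hat{f}\;d\hat{P}_r\paren{G,B_i},}
so the averages of local geometric properties over $\set{B_r}$ are precisely the integrals of bounded continuous test functions against $\hat{P}_r\paren{G,B_i}$. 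With this identification, the forward direction is immediate from the definition of weak convergence applied to each $\hat{f}$.

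For the converse, the strategy is to combine uniform separation, Proposition~\ref{prop2}, and Prokhorov's theorem. Given $\epsilon>0$, uniform separation supplies an $M$ with $P_r\paren{G,B_i}\paren{X_M}>1-\epsilon$ for every $i$. By Proposition~\ref{prop2}(2), $s$ restricts to a homeomorphism $\hat{X}_M\rightarrow X_M$, so $\hat{X}_M$ is compact in $\hat{\mathcal{G}}_r^n$ and the identity $s^*\nu\paren{\hat{X}_M}=\nu\paren{X_M}$ transfers tightness from $\set{P_r\paren{G,B_i}}$ to $\set{\hat{P}_r\paren{G,B_i}}$. Since $\hat{\mathcal{G}}_r^n$ carries the final topology with respect to the inclusions $s_i$, it is topologically the disjoint union of the countably many topological types $S^i_{n,r}$, each of which is metrizable and second-countable; hence $\hat{\mathcal{G}}_r^n$ is itself a separable metric space and Prokhorov's theorem applies. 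Relative compactness gives that every subsequence of $\set{\hat{P}_r\paren{G,B_i}}$ has a weakly convergent sub-subsequence. By hypothesis, the integral against every bounded continuous $\hat{f}$ converges, so all subsequential limits coincide as Borel measures on a metric space (bounded continuous functions separate such measures), and a standard subsequence argument then promotes this to weak convergence of the full sequence to a single limit $\hat{\mu}_r$. Doing this for every $r\in\N$ produces the smooth geometric cloth.

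The main obstacle is the bookkeeping needed to apply Prokhorov in the smooth topology rather than in $\mathcal{G}_r^n$ itself: one has to verify that $\hat{\mathcal{G}}_r^n$ is a separable metric space, and that uniform separation stated on $\mathcal{G}_r^n$ really does yield tightness on $\hat{\mathcal{G}}_r^n$. Both points hinge on Proposition~\ref{prop2}, which identifies the compact $\delta$-thickenable pieces under $s$; once that is in hand, the rest of the argument is a direct application of the Portmanteau and Prokhorov machinery recalled in Section~\ref{sec_prelim}, essentially parallel to the proof of Proposition~\ref{unifProp}.
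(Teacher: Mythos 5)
Your proposal is correct and follows essentially the same route as the paper's (very terse) proof: uniform separation gives tightness, Prokhorov yields subsequential weak limits, and convergence of the averages of all local geometric properties forces every subsequential limit to coincide, so the full sequence converges. Your version is in fact more careful than the paper's, since you transfer the tightness to the induced smooth measures on $\hat{\mathcal{G}}_r^n$ via Proposition~\ref{prop2} before applying Prokhorov, which is where the definitions of smooth geometric cloth and local geometric property actually live.
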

\begin{proof}
The uniform separation property implies that the sequence $\set{P\paren{G,B_r}}$ is tight, and has a convergent subsequence $\mu.$ The convergence of averages of all local geometric properties implies that the limit of any convergent subsequence must be $\mu,$ so $P\paren{G,B_r}\rightarrow \mu$ and $\mu=\mathcal{P}\paren{G}.$ 
\end{proof} We will examine hypotheses under which the local properties of graphs with smooth geometric cloth have averages over more general sequences in~\ref{sec_hom}.

Local geometric properties include a wide range of important properties, from subgraph densities to the percentage of edges with curvature above a certain value. In contrast, the set of continuous functions on $\mathcal{G}^r_n$ or $\check{\mathcal{G}}_r^n$ is relatively depaupaerate and, for example, does not include most topological properties. That is why weak convergence of probabilities measures on those spaces alone is not sufficient for our purposes. 

However, the restriction to bounded functions does exclude many important properties. For example, the number of vertices contained in a ball of radius $r$ in $\R^n$ is continuous with respect to $\hat{\mathcal{G}}_r^n,$ but is unbounded. To obtain more local geometric properties, one can restrict consideration to a smaller class of embedded graphs, such as graphs whose edge length is uniformly bounded.

In summary, embedded graphs with smooth geometric cloth have local geometric properties which have well-defined averages over balls, and those averages are captured by the cloth. 
\subsection{Examples}
\label{sec_geoClothExamples}
\subsubsection{Periodic Graphs}

A periodic embedded graph $G$ is one that is invariant under the translation action of a sublattice $\Z^n\subset\R^n.$ Such a graph has a unit cell $T\subset\R^n$ of least volume so that
\s{G=\bigcup_{t\in\Z^n}t\paren{G\cap T}}
The the geometric cloth of $G$ equals empirical probability distribution $P\paren{G, T}.$

\subsubsection{Poisson Point Process}
Let $\mathcal{C}^{0,n}$ and $\mathcal{C}_r^{0,n}$ and be the spaces of locally finite point collections in $\R^n$ and $B_r,$ respectively. A topological type $S_i$ of $\mathcal{C}_r^{0,n}$ is the subset consisting of all point collections with $i$ points. There is a natural map $\phi_i:S_i\rightarrow \text{Sym}^{i}B_r,$ and the Lebesgue measure on $B_r$ induces a probability measure $\nu_i$ on $S_i$ that selects $i$ points in the ball uniformly and independently.

The Poisson Point Process $\mu_\lambda$ is defined via marginal distributions $\mu_{\lambda,r}$ on $\mathcal{C}_r^{0,n}.$ $\mu_{\lambda,r}$ assigns each set $S_i$ probability $\frac{\lambda^n}{n!},$ and its condition distribution on each $S_i$ is $\nu_i.$

\subsubsection{Voronoi Graphs}

\label{ex_Voronoi}
Let $:V^n:\mathcal{C}^{0,n}\rightarrow\mathcal{G}^{n}$  be the function sending a locally finite point collection to the one-skeleton of the associated Voronoi tessellation. The probability distribution of Poisson-Voronoi graphs in $\R^n$ with intensity $\lambda$ is
\s{\mathcal{V}_\lambda^n=V^n_{*}\paren{\mu_\lambda}} The translation action of $\R^n$ is ergodic with respect to $\mathcal{V}_\lambda^n,$ so for $\mathcal{V}_\lambda^n$-almost all $G,$
\s{\mathcal{P}\paren{G}=\mathcal{V}_\lambda^n} $\mathcal{P}\paren{G}$ assigns probability zero to the orbit of $G$ under the $\R^n$ action. An example of a Voronoi graph in the plane is shown in Figure~\ref{fig:vor}.

\subsubsection{A Merged Configuration}
Let $\nu$ be the probability distribution on $\mathcal{C}^{0,2}$ given by sampling points from a Poisson point process with density one in the right half of the plane, and placing points in a fixed triangular lattice in the left half of the plane. Let $\hat{\nu}={V^2}_*\nu.$ $\hat{\nu}$ samples embedded graphs that look like Voronoi graph in the right half of the plane, and a hexagonal lattice in the left half. A finite region of such a graph is shown in~\ref{fig_halfHex2}.

Graphs sampled from $\hat{\nu}$ have a smooth geometric cloth with probability one, but that cloth does not equal $\hat{\nu}.$ Let $\mu_1$ be the probability distribution of Voronoi graphs in the planar with intensity 1, and let $\mu_2$ be the cloth of the hexagonal lattice in the left half of the plane. Then, with probability one the cloth of graph $G$ sampled from $\hat{\nu}$ is given by 
\s{\mathcal{P}\paren{G}\paren{B}=\frac{\mu_1+\mu_2}{2}}
 
\subsubsection{Penrose tilings}
As is the case for Benjamini-Schramm graph limits~\cite{2012lovasz}, Penrose tilings provide an example of embedded graphs with interesting geometric cloths. A full analysis of their properties is beyond the scope of this paper but can be derived based on the work of de Bruijin~\cite{1981bruijin,1981bruijin2}.

\subsection{Local Geometric Convergence}
\label{sec_GeoConv}
We are now ready to define local geometric convergence.
\begin{Definition}
\label{defn_geoConv}
A sequence $\set{G_i}$ of embedded graphs in $\mathcal{G}^n$ is \textbf{locally geometrically convergent} if each $G_i$ has smooth geometric cloth, and $s^*\mathcal{P}_r\paren{G_i}$ converges weakly to a limit $\Xi_r$ for all $r\in \N.$ 
\end{Definition}

For example, if $G$ is an infinite graph with smooth geometric cloth, the sequence $G\cap B_i$ is locally geometrically convergent. It follows immediately from the definition that if $\set{G_i}\rightarrow \Xi$ in the local geometric sense and $f$ is a local geometric property then
\s{\lim_{i\rightarrow\infty}\widebar{f_{G_i}}=\int_{\mathcal{G}^n}f\;d\Xi}
Furthermore, if the $\R^n$ translation action is ergodic with respect to a geometric graph limit $\Xi,$ then $\Xi$-almost all $G$ have local geometric properties whose averages equal the limiting values of the sequence averages. We will consider the implications of this hypothesis in more detail in Section~\ref{sec_hom}

The smooth topology of topological types is not particularly nice nor natural. However, weak convergence of a uniformly separating sequence on $\mathcal{G}^n$ implies local geometric convergence:
\begin{theorem}
\label{thm_convergence}
Let $\set{G_i}$ be a uniformly separating sequence of graphs in $\mathcal{G}^n.$ The following are equivalent:
\begin{enumerate}
\item $\set{G_i}$ is locally geometrically convergent
\item $\set{s^*\mathcal{P}_r\paren{G_i}}$ is weakly convergent for all $r\in\N.$
\item $\mathcal{P}_r\paren{G_i}$ is weakly convergent for all $r\in \N.$
\item $\mathcal{P}\paren{G_i}$ is weakly convergent
\item The average values of all local geometric properties $f,$ $\widebar{f_{G_i}}$ converge.
\end{enumerate}
\end{theorem}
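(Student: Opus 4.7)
The plan is to prove the five equivalences by routing them through Propositions~\ref{unifProp} and~\ref{localProp}, using the inverse-limit structure of $\mathcal{G}^n$ and Prokhorov's theorem to close the loop. The equivalence (1)$\Leftrightarrow$(2) is essentially the definition of local geometric convergence once one notes that, under uniform separation, existence of smooth geometric cloths follows from existence of geometric cloths: apply Proposition~\ref{unifProp} to each approximating sequence $\{P_r\paren{G_i, B_j}\}_j$, so that weak convergence of $P_r\paren{G_i,B_j}$ upgrades to weak convergence of $s^*P_r\paren{G_i,B_j}$. The real work is the chain (2)$\Leftrightarrow$(3)$\Leftrightarrow$(4)$\Leftrightarrow$(5).

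For (2)$\Leftrightarrow$(3), I apply Proposition~\ref{unifProp} at each fixed radius $r$. By definition of uniform separation on $\mathcal{G}^n$, the pushforwards $\mathcal{P}_r\paren{G_i}={p_r}_*\mathcal{P}\paren{G_i}$ form a uniformly separating family on $\mathcal{G}_r^n$ for every $r\in\N$, so the equivalence of weak convergence of $\mathcal{P}_r\paren{G_i}$ and of $s^*\mathcal{P}_r\paren{G_i}$ is exactly Proposition~\ref{unifProp}, applied coordinate by coordinate. For (3)$\Leftrightarrow$(4) I reuse the inverse-limit argument from the proof of Proposition~\ref{localProp}: the sets $p_r^{-1}\paren{B}$ with $B$ Borel in some $\mathcal{G}_r^n$ form a $\pi$-system generating the Borel $\sigma$-algebra on the inverse limit $\mathcal{G}^n$, so consistent limits $\mu^r=\lim_i \mathcal{P}_r\paren{G_i}$ assemble, via the Caratheodory extension theorem, into a unique probability measure on $\mathcal{G}^n$ to which $\mathcal{P}\paren{G_i}$ converges weakly; conversely, weak convergence on $\mathcal{G}^n$ forces weak convergence on each factor since each $p_r$ is continuous.

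The direction (2)$\Rightarrow$(5) is immediate: by definition every local geometric property has the form $f=\hat{f}\circ s^{-1}\circ p_r$ with $\hat{f}$ bounded and continuous on $\hat{\mathcal{G}}_r^n$, so $\widebar{f_{G_i}}=\int \hat{f}\, d\paren{s^*\mathcal{P}_r\paren{G_i}}$, which converges by (2). For (5)$\Rightarrow$(2), which I expect to be the main obstacle, I exploit tightness. The family $\{s^*\mathcal{P}_r\paren{G_i}\}$ is tight because the compact sets $\hat{X}_M$ supplied by Proposition~\ref{prop2} satisfy $s^*\mathcal{P}_r\paren{G_i}\paren{\hat{X}_M}=\mathcal{P}_r\paren{G_i}\paren{X_M}>1-\epsilon$ uniformly in $i$; Prokhorov's theorem then supplies weakly convergent subsequences. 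Hypothesis (5) forces every subsequential limit to assign the same integral to every bounded continuous $\hat{f}$, and since bounded continuous functions separate Borel probability measures supported in the metrizable pieces $\hat{X}_M$, all subsequential limits coincide and the full sequence converges. The smooth topology on $\hat{\mathcal{G}}_r^n$ is not globally well-behaved, so confinement to the compact metrizable sets $\hat{X}_M$ is exactly what lets tightness, Prokhorov, and the separation of measures by bounded continuous functions all go through; this is precisely why the uniform separation hypothesis is indispensable.
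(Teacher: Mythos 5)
Your proposal is correct and follows essentially the same route as the paper's (much terser) proof: (1)$\Leftrightarrow$(2) by definition, (2)$\Leftrightarrow$(3)$\Leftrightarrow$(4) via Propositions~\ref{unifProp} and~\ref{localProp}, and (5) handled through tightness of the uniformly separating sequence. You have simply supplied the details (Prokhorov plus uniqueness of subsequential limits on the compact sets $\hat{X}_M$) that the paper leaves implicit.
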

\begin{proof}
The second condition is the definition of local geometric convergence. The following three are equivalent because of Propositions~\ref{localProp} and~\ref{unifProp}. The last condition is equivalent to the first because a uniformly separating sequence is tight.
\end{proof}

The following question can be viewed as an analogue to the Aldous-Lyons conjecture~\cite{2012lovasz} in the local topological context: 

\begin{question}
Is any translation-invariant probability distribution on $\mathcal{G}^n$ the geometric cloth of an embedded graph $G$? More specifically, does this hold if the distribution $\mu$ is the the local geometric limit of a sequence of embedded graphs $\set{G_i}$? 
\end{question}

\subsection{Scale-free Convergence}
\label{sec_scaleFree}
The concept of local geometric convergence is too strong for curvature flow on graphs - computational simulations indicate that scale-dependent properties of change as graphs evolve.  To define a notion of scale-free convergence, we need a definition of a characteristic length scale:

\begin{Definition}
Let $\rho_{x,\alpha}:\mathcal{G}^n\rightarrow \mathcal{G}^n$ be the map induced the the dilation of $\R^n$ centered at $x$ by a factor $\alpha.$ A \textbf{characteristic length scale} of graphs is a continuous function $g:Y\rightarrow\R^+$ defined on a dilation-invariant subset $Y$ of $\mathcal{G}^n$ that scales with dilations: 
\s{g\circ\rho_{x,\alpha}=\alpha g \;\;\;\forall \alpha\in\R^+,x\in\R^n}

An embedded graph $G$ has a \textbf{well-defined characteristic length scale} $\widebar{g}_G$ if $g$ is a characteristic length scale and
\s{0<\widebar{g}=\lim_{r\rightarrow\infty} \int_{\mathcal{G}^{n}}g\;P\paren{G,B_r}=\int_{\mathcal{G}^{n,k}}g\;\mathcal{P}\paren{G}<\infty}
\end{Definition}
A characteristic length scale is necessarily unbounded, so we will need to impose some hypotheses on an infinite embedded graph to ensure it has a well-defined characteristic lengthscale. For example, the the function that assigns to a graph the infimal $r$ so that the probability that a ball of radius $r$ fully contains an edge of $G$ is a well-defined characteristic lengthscale for graphs with smooth geometric cloth whose maximum edge length is bounded above.

\begin{Definition}
Suppose $\set{G_i}$ is a sequence of embedded graphs in $\mathcal{G}^n$ which share a characteristic length-scale $g.$ Then $\set{G_i}$ converges in the \textbf{scale-free local geometric sense} if the sequence of rescaled graphs 
\s{\set{\rho_{\frac{1}{\widebar{g}_{G_i}} G_i}}}
is locally geometrically convergent, where $\rho_\alpha$ is the dilation of $\R^n$ by $\alpha$ centered at the origin. A \textbf{scale-free local geometric property} of cell complexes is a local geometric property that is dilation-invariant.
\end{Definition}

\subsection{Relation to Local Topological Convergence}
\label{sec_geoAndBS}

Local topological properties are not in general local geometric properties, for two reasons: small topological neighborhoods may be arbitrarily large geometric ones, and local topological properties are normalized by topological quantities rather than by volume. However, one may place hypotheses on the tail of the edge length distribution and the variation of vertex density under which local geometric convergence implies local topological convergence.  

There are several ways to define a topological cloth of an infinite embedded graph, given certain consistency assumptions. These notions coincide for finite graphs. We consider one such definition here, but very similar statements can be made about others. Let $G\in\mathcal{G}^n$ be an embedded graph, and let $I_r\paren{G}$ be the graph consisting of all edges and vertices of $G$ fully contained in $B_r.$ Recall that $G$ is said to have an inner regular topological cloth if $\set{I_r\paren{G}}$ is locally topologically convergent.

Let $e\paren{G,s,M}$ be the percentage of edges in $I_s\paren{G}$ of length greater than $M.$ Also, define the vertex density function $\rho\paren{G,s,r,M}$ to be the percentage of vertices $v\in I_s\paren{G}$ so that $B_r\paren{v}$ contains more than $M$ vertices of $G.$  

\begin{proposition}
Let $\set{G_j}$ be a locally geometrically convergent sequence of graphs in $\mathcal{G}^n,$ whose vertex degrees are bounded above by $D$ (with probability one with respect to local distributions $\mathcal{P}_r\paren{G_j}$). Also, let $V_j\paren{B_r}$ be the vertex set of $I_r\paren{G_j}.$ 

Assume there  is a constant $C_1>0$ and bounded, real-valued functions $M\paren{\epsilon}$ and $N\paren{\epsilon}$ so that for all $\epsilon>0$ and all sufficiently large $s$ and $j$
\begin{enumerate}
\item $\abs{\frac{\abs{V_j\paren{B_s}}}{\vol{B_s}}-C_1}<\epsilon$
\item $e\paren{G_j,s,M\paren{\epsilon}}<\epsilon$
\item $\rho\paren{G_j,s,r,N\paren{\epsilon}}<\epsilon$
\end{enumerate}then each $G_j$ has an inner-regular topological cloth and
\s{\lim_{j\rightarrow\infty} \lim_{r\rightarrow\infty}I\paren{G_j,B_r}}
 converges in the Benjamini-Schramm sense.
\end{proposition}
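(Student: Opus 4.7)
The plan is to convert swatch frequencies in $I_r(G_j)$ into integrals of bounded continuous test functions on $\hat{\mathcal{G}}^n_R$ against the local geometric cloth, and then apply weak convergence twice: first as $r\to\infty$ (using the smooth geometric cloth of each $G_j$) to establish inner-regularity, then as $j\to\infty$ (using local geometric convergence of $\{G_j\}$) to establish Benjamini--Schramm convergence of the iterated limit. Fix a swatch type $S$ of radius $k$ and $\epsilon>0$. Since vertex degree is bounded by $D$, a $k$-swatch contains at most $(D+1)^{k}$ cells; if every edge in the $k$-graph-neighborhood of $v$ has length at most $M(\epsilon)$, the swatch is geometrically contained in $B_{kM(\epsilon)}(v)$. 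By condition~(2) and a union bound over the $k$-neighborhood, the fraction of vertices in $V_j(B_s)$ whose $k$-swatch contains a ``long'' edge is $O(\epsilon)$. Set $L=kM(\epsilon)$ and $R=L+1$.

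Next I would fix a smooth nonnegative bump $\phi_\delta$ supported in $B_\delta(0)\subset\R^n$ with $\int \phi_\delta\,dx = 1$ and define $g:\hat{\mathcal{G}}^n_R\to\R_{\geq 0}$ by
$$g(H)\;=\;\min\!\Big(N(\epsilon)\sup\phi_\delta,\ \sum_{v\in V(H)}\phi_\delta(v)\,\chi_S(v,H)\Big),$$
where $\chi_S(v,H)=1$ iff the radius-$k$ swatch at $v$ in $H$ equals $S$ and is contained in $B_L(v)$. Within each topological type of $\hat{\mathcal{G}}^n_R$ the vertex set and combinatorial data are fixed and the vertex positions vary smoothly, so $g$ is continuous on $\hat{\mathcal{G}}^n_R$; the truncation makes it bounded. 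A Fubini calculation then gives
\begin{align*}
\int g\,dP_R(G_j,B_r) &= \frac{1}{\vol{B_r}}\int_{B_r} g\big((G_j-x)\cap B_R\big)\,dx \\
&= \frac{\#\{v\in V_j(B_{r-L}):\,\mathrm{swatch}_k(v,G_j)=S\}}{\vol{B_r}} + O(\epsilon),
\end{align*}
where the $O(\epsilon)$ absorbs the boundary layer $B_r\setminus B_{r-L}$ (of vanishing relative volume as $r\to\infty$), the long-edge exceptions, the high-vertex-density exceptions from~(3) that justify the truncation, and overlapping-bump errors for $\delta$ chosen small relative to the typical inter-vertex distance. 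By condition~(1), dividing by $C_1$ converts the right-hand side into $\mathcal{S}_k(I_r(G_j))(S)+O(\epsilon)$.

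Since each $G_j$ has a smooth geometric cloth, $\hat{P}_R(G_j,B_r)\to s^*\mathcal{P}_R(G_j)$ weakly on $\hat{\mathcal{G}}^n_R$ as $r\to\infty$; since $g$ is bounded continuous there, $\int g\,dP_R(G_j,B_r)\to\int g\,d\mathcal{P}_R(G_j)$. This forces convergence of $\mathcal{S}_k(I_r(G_j))(S)$ in $r$ up to an $O(\epsilon)$ tail that vanishes with $\epsilon$, yielding inner-regularity of each $G_j$'s topological cloth. Then by local geometric convergence of $\{G_j\}$, $s^*\mathcal{P}_R(G_j)\to\Xi_R$ weakly, so $\int g\,d\mathcal{P}_R(G_j)\to\int g\,d\Xi_R$. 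Letting $\epsilon\downarrow 0$ and letting $S$ range over all swatch types, all swatch frequencies in the iterated limit converge, which is precisely Benjamini--Schramm convergence.

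The main obstacle is the delicate interplay between the three ``small exceptional sets'' from conditions~(1)--(3) together with the bump-function parameter $\delta$: their contributions need to accumulate as a uniform $O(\epsilon)$ across both $j$ and $r$, and in the correct order of limits. The bump-function trick is essential: the naive indicator ``there is a vertex within $\delta$ of the origin with swatch type $S$'' is discontinuous in every natural topology on $\hat{\mathcal{G}}^n_R$, so the smoothing by $\phi_\delta$ together with the topological-type-wise continuity argument is what powers the weak-convergence step. The remaining estimates — in particular, bounding the boundary layer, overlapping bumps, and truncation errors uniformly in $j$ — are routine but require care to arrange in the correct order.
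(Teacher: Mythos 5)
The paper gives no proof of this proposition to compare against: it states only that ``the proof is long and technical, but straightforward,'' and omits it. Your proposal is a credible reconstruction of exactly the kind of argument the author is gesturing at, and the overall architecture is right: smooth the vertex-counting functional so it becomes a bounded continuous function on $\hat{\mathcal{G}}_R^n$, use condition (2) to localize swatches geometrically, condition (3) to truncate, condition (1) to pass from per-volume counts to per-vertex frequencies, and then apply weak convergence of $\hat{P}_R\paren{G_j,B_r}$ in $r$ (for inner-regularity) and of $s^*\mathcal{P}_R\paren{G_j}$ in $j$ (for the Benjamini--Schramm limit), in that order.

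One soft spot deserves explicit repair, because it undercuts the very point of your bump-function device. Your $\chi_S\paren{v,H}$ carries the hard condition ``the radius-$k$ swatch at $v$ is contained in $B_L\paren{v}$,'' and that indicator is \emph{not} continuous on a topological type of $\hat{\mathcal{G}}_R^n$: a sequence of graphs in which an edge of the swatch approaches and then crosses $\partial B_L\paren{v}$ gives a jump. You need either a second smoothing (replace the containment indicator by a continuous function of the maximal distance of the swatch from $v$, equal to $1$ below $L$ and $0$ above $L+\eta$, and control the intermediate band by a two-threshold or generic-threshold argument using condition (2)), or an argument that the cloths assign mass zero to the boundary set, which is not guaranteed. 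A second, smaller gap: your truncation level is justified by condition (3), but that condition controls the number of vertices in balls of radius $r$, whereas the truncation needs control at radius $\delta$; you should either invoke the condition at the appropriate radius or note that the count in $B_\delta$ is dominated by the count in $B_r$. Both issues are fixable within your framework and do not change the structure of the argument.
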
The proof is long and technical, but straightforward, and we omit it here. 

\section{Homogeneous Graphs}
\label{sec_hom}
A homogeneous graph is an embedded graph with smooth geometric cloth for which the translation action on $\R^n$ is ergodic. This will be the key hypothesis for the universality conjectures we propose for curvature flow on graphs in Section~\ref{sec_conj}. Here, we study the properties of homogeneous graphs. In particular, we show that the local geometric properties of a homogeneous graph have well-defined averages over a large class of sequences of increasing subsets of $\R^n.$ Note that a homogenous graph is necessarily infinite. 

\subsection{Preliminaries: Ergodic Theory}

\begin{Definition}
Let $\paren{X,\textit{F},\nu}$ be a measure space, and suppose a group $H$ acts on $X.$ The action of $H$ is \textbf{measure-preserving} if $\nu\paren{hB}=\nu\paren{B}$ for all $B\in\textit{F}$ and all $h\in H.$ In this case $\nu$ is said to be \textbf{$H$-invariant}. The action is \textbf{ergodic} if every subset of $X$ such that $hX=X\;\forall h\in H$ has measure 0 or 1.
\end{Definition}We require a generalized version of the Pointwise Ergodic Theorem due to Lindenstrauss~\cite{2001lindenstrauss}.
\begin{theorem}
(Lindenstrauss)
Let $T$ be a locally compact, amenable group acting on a measure space $\paren{X,\textit{F},\nu}$ with left-invariant Haar measure $\mu,$ and suppose $\set{F_n}$ is a sequence of compact subsets of $T$ that is
\begin{enumerate}
\item F{\o}lner: for all $t\in T$
\s{\lim_{i\rightarrow \infty} \frac{\mu\paren{tF_i\triangle F_i}}{\mu\paren{F_i}}=0}
where $\triangle$ denotes the symmetric difference.
\item Tempered: There exists a $C>0$ such that for all $n,$ 
\s{\mu\paren{\cup_{k\leq n} F_k^{-1}F_{n+1}}\leq C\mu\paren{F_{n+1}}}
\item \s{\abs{F_n}\geq n}
\end{enumerate}
If the action of $H$ is measure-preserving, then for any $f\in L^1\paren{\nu}$ there is a well-defined, $H$-invariant average $\widebar{f}\in L^1\paren{\nu}$ so that for $\nu$-almost all $x$
\s{\lim_{i\rightarrow\infty}\frac{1}{\mu\paren{F_i}}\int_{F_i}f\paren{gx}\;d\mu\paren{g}=\widebar{f}\paren{x}}
In addition, if the action is ergodic
\s{\widebar{f}\paren{x}=\int_{X}f\;d\nu\paren{x}}
\end{theorem}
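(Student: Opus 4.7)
The plan is to follow the classical two-step template for pointwise ergodic theorems: first establish a weak-type $(1,1)$ maximal inequality for the averaging operators $A_n f(x) := \mu(F_n)^{-1} \int_{F_n} f(gx)\, d\mu(g)$, then prove almost-everywhere convergence on a dense subclass of $L^1(\nu)$, and finally combine the two via a standard approximation argument to conclude pointwise convergence for every $f \in L^1(\nu)$. Once pointwise convergence of $A_n f$ to some $\widebar{f}$ is in hand, invariance of $\widebar{f}$ follows from the F{\o}lner property, and under the ergodicity hypothesis every invariant function is a.e. constant, forcing $\widebar{f} = \int f\, d\nu$.

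The key step, and the main technical obstacle, is the maximal inequality $\nu\{x : \sup_n A_n|f|(x) > \lambda\} \le C \lambda^{-1} \|f\|_1$ with $C$ depending only on the tempering constant. This is where the tempering condition earns its keep: in the classical $\mathbb{Z}$ or $\mathbb{R}^n$ setting the bound follows from a Vitali or Besicovitch covering lemma, but in a general non-commutative amenable group no deterministic covering of comparable quality is available. I would use Lindenstrauss's randomized covering lemma: iterate over the sets $\{g F_n\}$ in order of decreasing $n$, and at each scale independently select each still-available translate with probability proportional to $\mu(F_n)^{-1}$. The tempering hypothesis $\mu(\bigcup_{k\le n} F_k^{-1} F_{n+1}) \le C \mu(F_{n+1})$ bounds the probability that a point is already shadowed when a set from scale $n$ is considered, and hence controls the expected multiplicity of the resulting cover. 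Transferring this group-level inequality to the action on $X$ via a Calder\'on-type transference argument yields the desired weak-type bound.

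For the dense-class step, I would take the linear span of constants and coboundaries of the form $f - \phi_t^* f$ with $f \in L^\infty(\nu)$ and $t \in T$, where $\phi_t^* f(x) := f(tx)$. Constants are $A_n$-fixed, and for a coboundary, a change of variables using left-invariance of $\mu$ gives $|A_n(f - \phi_t^* f)(x)| \le \|f\|_\infty \cdot \mu(F_n \triangle t F_n)/\mu(F_n)$, which tends to zero uniformly in $x$ by the F{\o}lner property. Density of this span in $L^2(\nu)$ follows from the mean ergodic theorem (which provides the orthogonal decomposition into invariants and the closure of coboundaries, applicable here because amenability supplies an invariant mean), and density in $L^1(\nu)$ then follows by approximation of $L^1$ functions by bounded ones.

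The combination is routine: given $f \in L^1$ and $\epsilon > 0$, approximate by $g$ in the dense subclass, write $A_n f - A_m f = A_n(f-g) + (A_n g - A_m g) + A_m(g-f)$, and use the maximal inequality to absorb the outer terms off a set of arbitrarily small measure while the middle term vanishes by the already-established pointwise convergence of $A_n g$. The hard part is unquestionably the randomized covering lemma together with its transference to the action; once that machinery is set up, the rest of the argument is a fairly standard descendant of Birkhoff's proof. The tempering and growth hypotheses $|F_n|\ge n$ are used essentially only inside the covering argument, so relaxing them is precisely what makes Lindenstrauss's theorem subtler than its F{\o}lner-only predecessors.
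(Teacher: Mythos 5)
The paper does not prove this theorem; it is quoted verbatim as a known result of Lindenstrauss (the cited reference \cite{2001lindenstrauss}) and used as a black box in Section~\ref{sec_hom}, so there is no in-paper proof to compare against. Your outline is a faithful reconstruction of the architecture of Lindenstrauss's published argument: the weak-type $(1,1)$ maximal inequality obtained from a probabilistic covering lemma (where temperedness controls the expected multiplicity of the random cover) plus transference, pointwise convergence on the dense class given by the mean ergodic decomposition into invariant functions and coboundaries (with the F{\o}lner property killing the coboundaries), and the standard three-term approximation to pass to all of $L^1(\nu)$. The only looseness worth flagging is that in the non-ergodic case the dense class must include all invariant functions, not just constants --- you correct this implicitly when you invoke the mean ergodic decomposition --- and that the randomized covering lemma, which you rightly identify as the entire difficulty, is only named rather than carried out; as a proposal the roadmap is accurate, but it is a roadmap for a proof the paper itself never attempts.
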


\subsection{Homogeneous Graphs}
\begin{definition}
\label{defn_homo}
An embedded graph $G\in\mathcal{G}^n$ is \textbf{homogeneous} if it has a smooth geometric cloth, and the translation action of $\R^n$ is ergodic with respect to $\mathcal{P}\paren{G}.$
\end{definition} The following is an immediate consequence of the definition of ergodicity:
\begin{proposition}
If $G$ is homogeneous and $f$ is a local geometric property then $\mathcal{P}\paren{G}$-almost all $H$ have $\widebar{f_G}=\widebar{f_H}.$
\end{proposition}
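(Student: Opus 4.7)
The plan is to apply Lindenstrauss's pointwise ergodic theorem to the translation action of $\R^n$ on $(\mathcal{G}^n, \mathcal{P}\paren{G})$. Homogeneity provides exactly the hypotheses needed: $\mathcal{P}\paren{G}$ is $\R^n$-invariant (which is essentially automatic from the definition of the geometric cloth, since a $t$-translate of $P\paren{G,B_i}$ equals $P\paren{G,B_i+t}$, and the symmetric difference $B_i\triangle\paren{B_i+t}$ has relative Lebesgue measure $O\paren{|t|/i}$ and so vanishes in the weak limit), and the action is ergodic by assumption. A local geometric property $f=\hat{f}\circ s^{-1}\circ p_r$ is bounded and Borel-measurable (composition of the continuous $p_r$, the Borel $s^{-1}$, and the bounded continuous $\hat{f}$), and therefore lies in $L^1\paren{\mathcal{P}\paren{G}}$.

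Next I would verify that the sequence $F_m = B_m$ of Euclidean balls centered at the origin satisfies the three Lindenstrauss conditions. The F{\o}lner condition holds since $|\paren{B_m+t}\triangle B_m|/|B_m|=O\paren{|t|/m}\to 0$ for every fixed $t$. For the tempered condition, $\bigcup_{k\leq m}F_k^{-1}F_{m+1}=\bigcup_{k\leq m}B_{k+m+1}=B_{2m+1}$, and $|B_{2m+1}|/|B_{m+1}|\to 2^n$, so a uniform constant $C=2^n$ suffices. Finally $|F_m|\geq m$ holds for large $m$.

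Invoking Lindenstrauss's theorem yields: for $\mathcal{P}\paren{G}$-almost every $H\in \mathcal{G}^n$,
$$\lim_{m\to\infty}\frac{1}{|B_m|}\int_{B_m}f\paren{\phi^H\paren{x}}\,dx \;=\; \int_{\mathcal{G}^n} f\,d\mathcal{P}\paren{G} \;=\; \widebar{f_G},$$
where the final equality is the earlier statement that $\widebar{f_G}=\int f\,d\mathcal{P}\paren{G}$ whenever $G$ has smooth geometric cloth (guaranteed by homogeneity).

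To finish, I would upgrade this integer-radius limit to the continuous-radius limit that defines $\widebar{f_H}$. For $r\in[m,m+1)$ and $|f|\leq M$,
$$\left|\frac{1}{|B_r|}\int_{B_r}f\circ\phi^H\,dx - \frac{1}{|B_m|}\int_{B_m}f\circ\phi^H\,dx\right| \;\leq\; 2M\bigl(1-\paren{m/\paren{m+1}}^n\bigr)\;\to\;0,$$
so the continuous limit exists for $\mathcal{P}\paren{G}$-almost every $H$ and equals $\widebar{f_G}$, i.e. $\widebar{f_H}=\widebar{f_G}$. The only substantive step is the application of Lindenstrauss; checking the tempered condition for Euclidean balls and sliding from integer to continuous radii are routine. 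A potential subtlety worth flagging is whether one wants ergodicity directly for continuous $\R^n$ or for a discretization $\Z^n\subset\R^n$; since $\R^n$ is amenable and locally compact and Lindenstrauss is stated for that generality, no discretization is required.
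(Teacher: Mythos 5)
Your proof is correct and follows exactly the route the paper intends: the paper states this proposition as an immediate consequence of the Lindenstrauss pointwise ergodic theorem quoted just before it, applied to the translation action on $\paren{\mathcal{G}^n,\mathcal{P}\paren{G}}$ with the F{\o}lner--tempered sequence of balls, together with the identity $\widebar{f_G}=\int f\,d\mathcal{P}\paren{G}$. You have simply written out the routine verifications (invariance of the cloth, temperedness of balls, integer-to-continuous radii) that the paper leaves implicit.
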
 That is, the geometric cloth of a homogeneous graph $G$ is a probability distribution sampling graphs with the same local geometric properties as $G.$ Note the resemblance to the topological cloth.  

One might hope that if $f$ is a local geometric property and $G$ is a homogeneous graph, then $f$ would have a well-defined average over any tempered F{\o}lner sequence. A counterexample is presented in Section~\ref{sec_counterexample}. However, the local properties of homogeneous graphs have well-defined averages over a smaller set of averaging sequences:

\begin{Definition}
An increasing sequence $\set{F_i}_{i\in\N}$ of compact, convex sets in $\R^n$ is an \textbf{admissible averaging sequence} if 
\begin{enumerate}
\item$\lim_{i\rightarrow\infty}\vol{F_i}=\infty$
\item $\limsup_{i\rightarrow\infty}\frac{\vol{B\paren{F_i}}}{\vol{F_i}}<\infty$
where where $B\paren{F_i}$ is the smallest ball centered at the origin containing $F_i.$ 
\end{enumerate}
\end{Definition}

\begin{proposition}
Let $X$ be a completely regular space with $\R^n$ action. For $x\in X$ and any bounded subset $K$ of $\R^n,$ define the empirical measure $P\paren{x,K}$ to the pushforward of the normalized Lebesgue measure on $K$ via the map $\phi_x:\R^n\rightarrow X$ given by $\phi_x\paren{a}=ax.$ If $\lim_{r\rightarrow\infty}P\paren{X,B_r}$ converges weakly to a probability distribution $\sigma$ for which the $\R^n$ action is ergodic, then $\lim_{i\rightarrow\infty}P\paren{X,F_i}$ converges weakly to the same limit for any admissible averaging sequence $\set{F_i}.$ 
\end{proposition}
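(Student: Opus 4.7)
The plan is to combine tightness, a F{\o}lner-type symmetric-difference estimate, and the ergodicity of $\sigma$ in order to pin down every weak subsequential limit of $\{P(x,F_i)\}$ as $\sigma$. First I would exploit the admissibility hypothesis to write $F_i\subseteq B_{r_i}$, where $B_{r_i}$ is the smallest origin-centered ball containing $F_i$, and $\vol{B_{r_i}}/\vol{F_i}\leq C$ for some fixed $C>0$. Because $F_i\subseteq B_{r_i}$, one has the pointwise bound $P(x,F_i)(A)\leq C\cdot P(x,B_{r_i})(A)$ for every Borel $A\subseteq X$. The weakly convergent sequence $\{P(x,B_{r_i})\}$ is tight, hence so is $\{P(x,F_i)\}$, and by Prokhorov's theorem it suffices to show that every weak subsequential limit $\nu$ of this family equals $\sigma$.

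Next I would verify that admissible sequences are F{\o}lner. Convexity of $F_i$ together with $r_i\leq(C\vol{F_i}/\omega_n)^{1/n}$ bounds the surface area of $F_i$ by that of $B_{r_i}$, so $\vol{F_i\triangle(F_i+t)}\leq c_n|t|r_i^{n-1}$, and dividing by $\vol{F_i}\geq\omega_n r_i^n/C$ gives a ratio of order $|t|/r_i\to 0$. A direct substitution in the definition yields $t\cdot P(x,F_i)=P(x,F_i-t)$, whose total variation distance from $P(x,F_i)$ is controlled by this symmetric-difference ratio. Combined with continuity of the $\R^n$ action (so that $f\circ t$ is bounded continuous whenever $f$ is), this forces any weak subsequential limit $\nu$ to satisfy $t\nu=\nu$ for every $t\in\R^n$.

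To identify $\nu$ with $\sigma$, I would use the mass decomposition
\s{P(x,B_{r_i})=\alpha_i P(x,F_i)+(1-\alpha_i)P(x,B_{r_i}\setminus F_i),\qquad \alpha_i=\frac{\vol{F_i}}{\vol{B_{r_i}}}\in\left[\frac{1}{C},1\right].}
Pass to a subsequence along which $\alpha_i\to\alpha$ and $P(x,F_i)\to\nu$ weakly. If $\alpha=1$, integrating any bounded continuous $f$ against the decomposition and taking limits gives $\int f\,d\sigma=\int f\,d\nu$, so $\nu=\sigma$. If $\alpha<1$, then $1-\alpha_i$ is eventually bounded below, so $\{P(x,B_{r_i}\setminus F_i)\}$ is tight (by the same argument as in Step 1 applied with constant $1/(1-\alpha_i)$) and, along a further subsequence, converges to some probability measure $\mu$. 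The F{\o}lner estimate applied to $B_{r_i}\setminus F_i$ (whose symmetric difference under translation by $t$ is at most $\vol{B_{r_i}\triangle(B_{r_i}+t)}+\vol{F_i\triangle(F_i+t)}=O(|t|r_i^{n-1})$, against a denominator of order $r_i^n$) shows $\mu$ is also translation invariant. Passing to weak limits in the decomposition yields $\sigma=\alpha\nu+(1-\alpha)\mu$, exhibiting the ergodic measure $\sigma$ as a nontrivial convex combination of translation-invariant probability measures. Since ergodic measures are extreme points of the set of invariant probability measures, this forces $\nu=\mu=\sigma$.

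The main obstacle I expect is the translation-invariance argument for $\mu$ in the case $\alpha<1$, since $B_{r_i}\setminus F_i$ is not convex and one must bound its F{\o}lner ratio by combining the two separate boundary contributions from $B_{r_i}$ and $F_i$. A secondary issue is that Prokhorov's theorem and the recovery of a measure from its integrals against $C_b(X)$ in principle require more than complete regularity of $X$; however, in the intended applications $X$ will be metrizable (for instance $\mathcal{G}^n$ or $\mathcal{G}_r^n$), so the standard machinery applies without modification.
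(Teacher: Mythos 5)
Your argument is essentially correct in substance but follows a genuinely different route from the paper's. The paper fixes a single bounded continuous $f$, replaces it by its $s$-ball average $f_s$, uses the ergodic theorem to show that the open set $A_{s,\epsilon}$ on which that average is within $\epsilon$ of $\int f\,d\sigma$ has $\sigma$-measure close to $1$, transfers this to $P\paren{x,F_i}$ via the Portmanteau theorem together with the volume ratio $\vol{B\paren{F_i}}/\vol{F_i}\leq C$, and then estimates $\frac{1}{\vol{F_i}}\int_{F_i}f\paren{ax}\,d\mu\paren{a}$ directly; no compactness of the family of measures is ever invoked. You instead run a soft argument: tightness plus Prokhorov to reduce to identifying subsequential limits, a F{\o}lner/total-variation estimate to show every subsequential limit is translation-invariant, and the mass decomposition $P\paren{x,B_{r_i}}=\alpha_i P\paren{x,F_i}+\paren{1-\alpha_i}P\paren{x,B_{r_i}\setminus F_i}$ together with extremality of ergodic measures to pin the limit down. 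Your route is cleaner and isolates where each hypothesis enters (convexity only through the F{\o}lner estimate, ergodicity only through the extreme-point property), and it avoids the paper's Fubini/boundary-layer manipulation with $f_s$. The supporting steps check out: the domination $P\paren{x,F_i}\paren{A}\leq C\,P\paren{x,B_{r_i}}\paren{A}$, the surface-area bound for convex bodies contained in $B_{r_i}$, the inclusion $\paren{A\setminus B}\triangle\paren{A'\setminus B'}\subseteq\paren{A\triangle A'}\cup\paren{B\triangle B'}$ for the non-convex complements, and the fact that an ergodic measure cannot be a nontrivial convex combination of distinct invariant measures.

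The one real weakness is your very first step. Weak convergence of $\set{P\paren{x,B_r}}$ does not imply its tightness on a completely regular space, nor even on a general separable metric space; that converse half of Prokhorov's theorem (the paper states only the forward direction) requires $X$ to be Polish, and the spaces to which the proposition is actually applied ($\hat{\mathcal{G}}_r^n$, $\mathcal{G}^n$) are metrizable and separable but not obviously complete. So as written your proof establishes the proposition only under the added hypothesis that $X$ is Polish, or that the family $\set{P\paren{x,B_r}}_{r>0}$ is tight (which in the paper's applications is supplied by uniform separation rather than by metrizability alone). To recover the statement at its stated level of generality you would need either the paper's direct computation or an explicit tightness hypothesis; flagging the issue as one of "standard machinery" for metrizable spaces understates it.
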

\begin{proof}

Let $\set{F_i}$ be an admissible averaging sequence, and $\mu$ be the standard Lebesgue measure on $\R^n.$ Also, let $f:X\rightarrow\R$ be a bounded continuous function, and let $\bar{f}$ be the expected value of $f$ for $x\in X,$ taken over a sequence of balls centered at the origin. We will show that $f$ has the same expected value for $f$ for any admissible averaging sequence. To do so, we will replace $f$ with an averaging function $f_r$:
\s{f_r\paren{x}=\frac{1}{\vol{B_r}}\int_{B_r}f\paren{ax}\;d\mu\paren{a}.}
Also, let
\s{\hat{F}_i^r=\set{a\in F_i : B_r\paren{a}\subset F_i}}
Then
\begin{align*}
\int_{F_i}f_r\paren{ax}\;d\mu\paren{a}&\\
&=\frac{1}{\vol{B_r}}\int_{F_i}\int_{B_r}f\paren{abx}\;d\mu\paren{b}\;d\mu\paren{a}\\
&=\frac{1}{\vol{B_r}}\int_{B_r}\int_{\hat{F}_i^r}f\paren{abx}\;d\mu\paren{a}d\mu\paren{b}+\int_{F_i-\hat{F}_i^r}f_r\paren{ax}\;d\mu\paren{a}\\
&=\int_{\hat{F}_i^r}f\paren{ax}\;d\mu\paren{a}+O\paren{\text{vol}_{n-1}\paren{\partial F_i}}\\
&=\int_{F_i}f\paren{ax}\;d\mu\paren{a}+O\paren{\text{vol}_{n-1}\paren{\partial F_i}}
\end{align*}
because $f$ is bounded and the volume of $F_i-\hat{F}_i^r$ is $O\paren{\text{vol}_{n-1}\paren{\partial F_i}}.$ Therefore if the error of approximating $f$ by $f_r$ is denoted
\s{\text{err}\paren{F_i,r}=\frac{1}{\vol{F_i}}\abs{\int_{F_i}f_r\paren{ay}\;d\mu\paren{a}-\int_{F_i}f\paren{y}},} then $text{err}\paren{F_i,r}$ goes to zero as $i\rightarrow\infty.$ 

For any $\epsilon>0,$ define a sequence of open subsets of $X:$
\s{A_{s,\epsilon}=\set{x\in X:\abs{\frac{1}{\vol{B_s}}\int_{B_s}f\paren{ax}\;d\mu\paren{a}-\widebar{f}}<\epsilon}}
Ergodicity implies that for any $\delta>0$ and sufficiently large $s,$ 
\s{\sigma\paren{A_{s,\epsilon}\paren{f}}>1-\delta} Let $\epsilon>0$ and choose $s$ such that $\sigma\paren{A_{s,\epsilon}}>1-\epsilon/2.$ Because $\mu_i$ converges weakly to $\mu$ as $r\rightarrow\infty$, the Portmanteau lemma implies that for all sufficiently large $r,$ 
\s{P\paren{x,B_r}\paren{A_{s,\epsilon}}>1-\epsilon}
The second hypothesis in the definition of an admissible averaging sequence implies that
\s{\limsup_{i\rightarrow\infty}{\frac{\text{vol}_{n-1}\paren{\partial F_i}}{\text{vol}_n\paren{F_i}}}<\infty}
via the Blaschke selection theorem~\cite{2007gruber}. Thus, for sufficiently large $i,$
\s{P\paren{x,F_i}\paren{A_{s,\epsilon}}>1-\epsilon}It follows that there is an $I_1$ so that for all $i>I_1$ we can write $F_i=E_i\cup D_i$ where $E_{i}\subset  A_{s,\epsilon}$ and $\sup\abs{f}\mu\paren{D_i}<\epsilon\mu\paren{F_i}$ This also implies $\frac{\mu\paren{E_i}}{\mu\paren{F_i}}>1-\frac{\epsilon}{\sup\abs{f}}$. 

Let $M=\sup\abs{f}$, and choose $I_2$ so that $\text{err}\paren{F_i,s}<\epsilon$ for all $i>I_2.$ Then for all $i>\max\paren{I_1,I_2},$ 

\begin{align*}
\abs{\frac{1}{\vol{F_i}}\int_{F_i}f\paren{ax}\;d\mu\paren{a}-\widebar{f}}\\
&=\abs{\frac{1}{\vol{F_i}}\int_{F_i}f_s\paren{ay}\;d\mu\paren{a}+\text{err}\paren{F_i,s}-\widebar{f}}\\
&\leq\abs{\frac{1}{\vol{F_i}}\big(\int_{E_i}f_s\paren{ay}\;d\mu\paren{a}+\int_{D_i}f_s\paren{ay}\;d\mu\paren{a}\big)-\widebar{f}}+\epsilon\\
&\leq \abs{\frac{\vol{E_i}}{\vol{F_i}}\frac{1}{\vol{E_i}}\int_{E_i}f_s\paren{ay}\;d\mu\paren{a}-\widebar{f}}+M\frac{\mu\paren{D_i}}{\mu\paren{F_i}}+\epsilon\\
&\leq\abs{\frac{1}{\vol{E_i}}\int_{E_i}f_s\paren{ay}\;d\mu\paren{a}-\widebar{f}}+\frac{\epsilon}{M\vol{E_i}}\abs{\int_{E_i}f\paren{ay}\;d\mu\paren{a}}+2\epsilon\\
&\leq4\epsilon
\end{align*} So $P\paren{x,F_i}\rightarrow\sigma,$ weakly, as desired.
\end{proof} The following is an immediate consequence:
\begin{theorem}
If $G$ is a homogeneous graph then then $s^*\mathcal{P}\paren{G,F_i}$ converges weakly to $\mathcal{P}\paren{G}$ for any admissible averaging sequenced. Furthermore, $\mathcal{P}\paren{G}$-almost all graphs have local geometric properties whose averages are equal to those of $G,$ and are well-defined over any tempered F{\o}lner sequence.
\end{theorem}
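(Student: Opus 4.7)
The plan is to deduce both assertions directly from the preceding proposition together with the ergodic machinery developed above.

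For the first assertion, I would apply the preceding proposition with $X = \mathcal{G}^n$, base point $x = G$, and the $\R^n$-action by translation. Homogeneity of $G$ provides exactly the two hypotheses needed: $P\paren{G, B_r} \to \mathcal{P}\paren{G}$ weakly, and the $\R^n$-action is ergodic with respect to $\mathcal{P}\paren{G}$. (Translation invariance of $\mathcal{P}\paren{G}$ follows by comparing $P\paren{G, B_r}$ with $P\paren{G, t + B_r}$ and using the boundary-to-volume ratio of balls.) The proposition yields weak convergence $P\paren{G, F_i} \to \mathcal{P}\paren{G}$ in $\mathcal{G}^n$, and composing with the continuous maps $p_r$ gives weak convergence $P_r\paren{G, F_i} \to \mathcal{P}_r\paren{G}$ in $\mathcal{G}_r^n$ for every $r \in \N$.

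To lift this convergence into the smooth topology of topological types, I would invoke Proposition~\ref{unifProp}, which reduces the task to verifying that $\set{P_r\paren{G, F_i}}$ is uniformly separating on $\mathcal{G}_r^n$. Smoothness of the geometric cloth of $G$ supplies uniform separation for $\set{P_r\paren{G, B_s}}$. For a general admissible $\set{F_i}$, containing $F_i$ in the circumscribed ball $B\paren{F_i}$ yields
\s{P_r\paren{G, F_i}\paren{X_M^c} \leq \frac{\vol{B\paren{F_i}}}{\vol{F_i}}\, P_r\paren{G, B\paren{F_i}}\paren{X_M^c},}
and the admissibility condition $\limsup \vol{B\paren{F_i}}/\vol{F_i} < \infty$ makes the right-hand side uniformly small in $i$. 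Proposition~\ref{unifProp} then delivers $s^* P_r\paren{G, F_i} \to s^* \mathcal{P}_r\paren{G}$ weakly in $\hat{\mathcal{G}}_r^n$, which is the smooth form of the claimed convergence.

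For the second assertion, I would apply the Lindenstrauss pointwise ergodic theorem stated above to the probability space $\paren{\mathcal{G}^n, \mathcal{P}\paren{G}}$ with the translation action of the amenable group $\R^n$. Since the action is measure-preserving and ergodic, for each bounded local geometric property $f \in L^1\paren{\mathcal{P}\paren{G}}$ there is a $\mathcal{P}\paren{G}$-conull set on which the ergodic average along any tempered F\o{}lner sequence equals $\int f \, d\mathcal{P}\paren{G} = \widebar{f_G}$. Because $\hat{\mathcal{G}}_r^n$ is metrizable and second countable for each $r$, a countable subfamily of local geometric properties separates measures on $\mathcal{G}^n$; intersecting the corresponding countably many null sets produces a single full-measure set of graphs $H$ on which the averages of all local geometric properties agree with those of $G$ and exist along every tempered F\o{}lner sequence.

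The main obstacle will be the uniform separation step in paragraph two: the definition of smooth geometric cloth asserts uniform separation along the integer-indexed ball sequence, while the estimate above needs control along the continuously varying radius of $B\paren{F_i}$. I would handle this either by a monotonicity argument comparing $B\paren{F_i}$ to adjacent integer-radius balls, or by upgrading the uniform separation hypothesis (using weak convergence on the ball sequence together with closedness of $X_M$) to all sufficiently large radii before applying the containment bound.
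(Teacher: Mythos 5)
Your overall route is the one the paper intends: the paper's own justification is a single sentence declaring the theorem an ``immediate consequence'' of the preceding proposition (applied with $X=\mathcal{G}^n$ and the translation action) together with the Lindenstrauss ergodic theorem, so your first and third paragraphs are filling in detail the paper omits, and your containment bound $P_r\paren{G,F_i}\paren{X_M^c}\leq \frac{\vol{B\paren{F_i}}}{\vol{F_i}}\,P_r\paren{G,B\paren{F_i}}\paren{X_M^c}$ is a clean way to propagate tightness from balls to an admissible averaging sequence.

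The genuine gap is in your second paragraph: you assert that smoothness of the geometric cloth of $G$ supplies uniform separation for $\set{P_r\paren{G,B_s}}$, but this reverses the logic of the definition. Uniform separation appears there only as a \emph{sufficient} condition for the induced smooth measures to converge (``for example, if $P\paren{G,B_i}$ is a uniformly separating sequence''); having a smooth geometric cloth does not hand you an $M$ with $P_r\paren{G,B_s}\paren{X_M}>1-\epsilon$ uniformly in $s$. Since homogeneity (Definition~\ref{defn_homo}) requires only smooth geometric cloth plus ergodicity, your appeal to Proposition~\ref{unifProp} is not licensed by the stated hypotheses, and your containment estimate then has nothing to bound on its right-hand side; the obstacle you flag at the end (integer versus continuous radii) is not the real one. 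To close the gap you must either strengthen the hypothesis to uniform separation of $\set{P\paren{G,B_i}}$ (as the paper does explicitly in Theorem~\ref{thm_convergence}) or rerun the averaging argument of the preceding proposition directly against the smooth local distributions, checking that the sets $A_{s,\epsilon}$ built from a local geometric property are open in the relevant smooth topology. A smaller point, inherited from the paper's own phrasing: Lindenstrauss gives an almost-sure statement for each \emph{fixed} tempered F{\o}lner sequence, so intersecting null sets over your countable separating family of properties does not by itself produce a single conull set on which the averages exist along \emph{every} tempered F{\o}lner sequence, an uncountable family.
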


\subsection{A Counterexample}
\label{sec_counterexample}
\begin{figure}[ht]
\center
\includegraphics[width=.4\textwidth]{/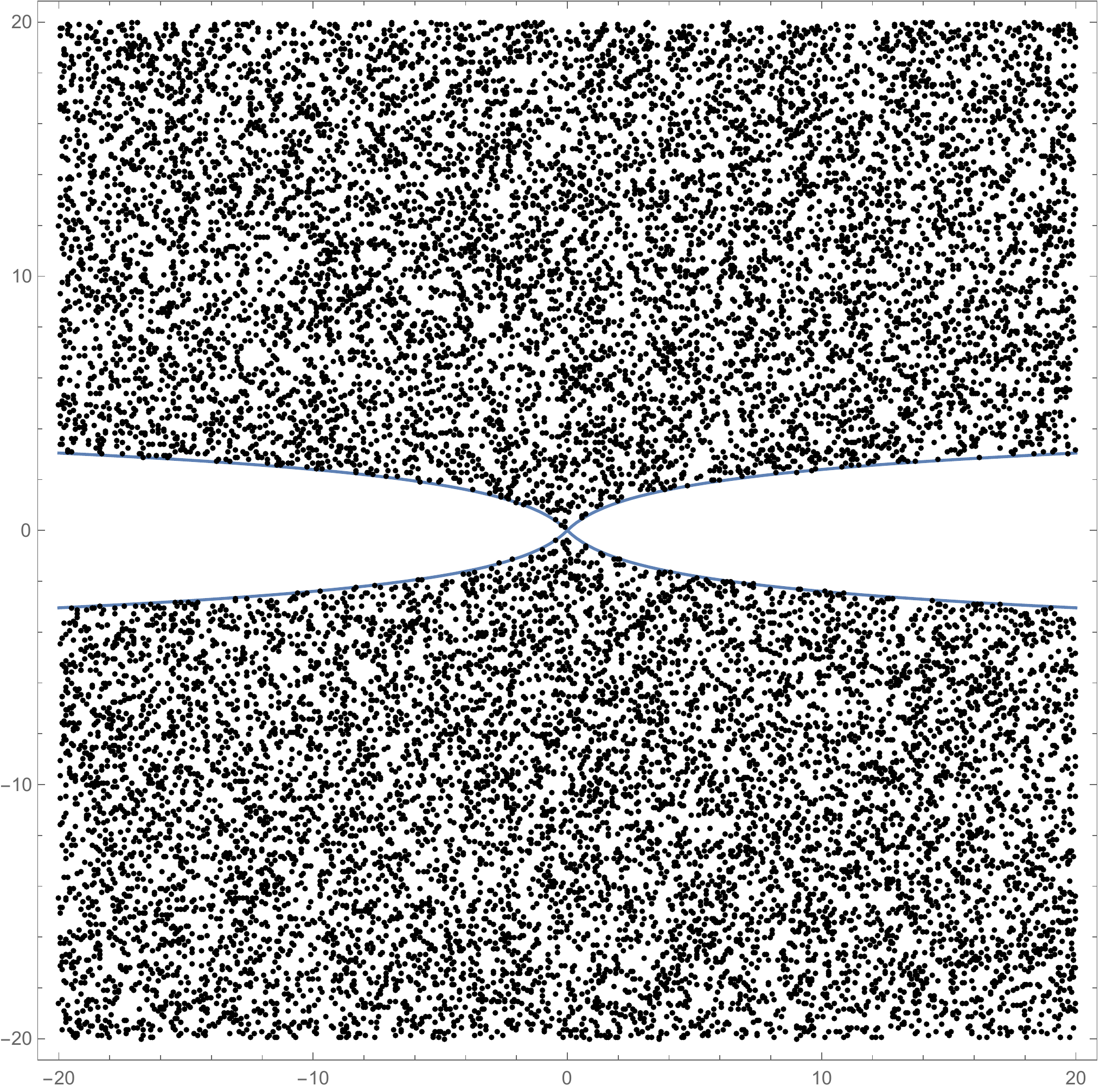}
 \caption{\label{fig_counterex} A Poisson distributed point collection, with points from the region $M$ removed.}
\end{figure} 
We will provide an example of a homogeneous locally finite point collection whose local properties do not have well-defined averages over all tempered F{\o}lner sequences. The Voronoi diagram of such a point collection will also share that property. Note that the concepts of geometric cloth and homogeneity extend automatically to locally finite point collections.

Let $\mu$ be the Poisson Point Process with intensity one on $\R^2.$ Let $M\subset\R^2$ be the region bounded by the inequality
\s{\text{log}\paren{\abs{x}+1}\leq\abs{y}}
Also for any $w\in Y,$ let $\hat{w}$ be the locally finite point collection obtained from $w$ by deleting all points contained in $M.$ An example is shown in Figure~\ref{fig_counterex}.  The volume fraction
\s{\frac{\vol{M\cap B_r}}{\vol{B_r}}\leq\frac{4r\text{log}\paren{r+1}}{\pi r^2}}
goes to zero as $r\rightarrow\infty.$ It follows from the pointwise ergodic theorem that for $\mu$-almost all $w,$ $\hat{w}$ has a well-defined geometric cloth.

Let $E_r$ be the rectangle centered at the origin bounded by the lines $x=\pm r$ and $y=\pm\text{log}\paren{r+1}.$ $E_r$ is a nested sequence of convex sets whose inradius goes to $\infty,$ so it is a tempered F{\o}lner sequence. However, the volume ratio
\s{\frac{\vol{M\cap E_r}}{\vol{E_r}}=\frac{4\int_{0}^{r}\text{log}\paren{s+1}\;d\mu\paren{s}}{4r\text{log}\paren{r}}=\frac{4\paren{\paren{r+1}\text{log}\paren{r+1}-r}}{4r\text{log}\paren{r+1}}}
goes to one as $r\rightarrow\infty,$ so $P\paren{\hat{w},E_r}$ cannot converge weakly to $\mu.$

\section{The Universality Conjectures}
\label{sec_conj}
We use the concepts of Local Geometric Convergence and Local Topological Convergence to make several onjectures about the long-term behavior of graphs evolving by the network flow of curvature flow on embedded graphs in $\R^n.$

\begin{conj}[Universality Conjecture for Local Topological Convergence]
\label{steady_state_hypothesisb1}
There exists a probability distribution $\sigma_n$ on the space of countable, connected graphs with a root vertex specified such that any network flow $G\paren{t}$ with homogeneous initial condition $G\paren{0}\in\mathcal{G}^n$ converges in the local topological sense to $\sigma_{\Omega}$ or to a stationary state as $t\rightarrow\infty.$
\end{conj}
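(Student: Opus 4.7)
The plan is to first establish the local geometric analogue (Conjecture~\ref{steady_state_hypothesisa}) and then deduce the topological statement using the relationship described in Section~\ref{sec_geoAndBS}. Since local topological convergence is controlled by local geometric convergence under bounds on edge lengths, vertex degrees, and vertex densities, the topological universality is expected to inherit from a geometric universality. So the first step is to reformulate the goal: produce a distribution $\Xi_n$ on $\mathcal{G}^n$ such that the rescaled flow $\rho_{1/\widebar{g}_{G(t)}} G(t)$ converges weakly in the scale-free local geometric sense to $\Xi_n$, and then let $\sigma_n$ be the Benjamini--Schramm limit of $\Xi_n$.

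Next I would show that homogeneity is preserved along the flow. Because curvature flow is locally defined and commutes with translations of $\R^n$, the geometric cloth $\mathcal{P}\paren{G(t)}$ is translation-invariant whenever $\mathcal{P}\paren{G(0)}$ is, and the action of the flow on embedded graphs descends to a measurable map $\Phi_t$ on probability measures on $\mathcal{G}^n$ which commutes with translations. Ergodicity should be transported forward along $\Phi_t$ since the evolution of swatches or local geometric patches up to time $t$ depends only on a bounded neighborhood (with high probability, modulo control of topological changes). This gives a well-defined semigroup on homogeneous distributions, and the universality conjecture becomes a statement about the long-time asymptotics of the rescaled semigroup $\widetilde{\Phi}_t = \rho_{1/\widebar{g}(t)} \circ \Phi_t$.

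The third step is a compactness argument. The average edge length scales as $t^{-1/2}$, so after rescaling by $\widebar{g}(t)$ the total edge length per unit volume is uniformly bounded. Combined with the expected control on curvatures coming from the smoothing effect of the flow, this should produce a uniformly separating family of probability measures (Definition~\ref{defn_unifSep}), and by Theorem~\ref{thm_convergence} tightness then implies existence of subsequential weak limits. Any such limit is a fixed point of the rescaled flow (or corresponds to a self-similar, i.e.~scale-free, solution), which is the candidate for $\Xi_n$; one then transports this back to topological data to obtain $\sigma_n$. The hypotheses of the proposition in Section~\ref{sec_geoAndBS} would need to be verified for the limit, which amounts to showing tails of edge-length and vertex-density distributions are controlled uniformly in $t$.

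The principal obstacle is uniqueness: showing that every homogeneous trajectory, not just a subsequence, converges to the same $\Xi_n$ (unless it lands in the stationary stratum of straight-edged regular graphs). This is the genuinely hard problem, analogous to asking for a unique attractor of a singular nonlinear evolution on an infinite-dimensional space of measures, and the existence of stationary graphs shows the attractor cannot be globally unique in the naive sense. A plausible route is to construct a Lyapunov functional on homogeneous cloths --- for instance a suitably normalized entropy of the swatch-frequency distribution, or a quantity built from the von Neumann--Mullins area evolution together with Herring's condition --- that is strictly decreasing under $\widetilde{\Phi}_t$ except on the stationary locus, and whose level sets are weakly compact. Uniqueness would then follow by a standard LaSalle-type argument. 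Finding such a functional, and controlling topological changes well enough that it is actually well-defined and monotone across singular times, is where essentially all the mathematical difficulty lies; the existence and compactness steps above are expected to be much more tractable.
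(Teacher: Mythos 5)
This statement is a \emph{conjecture}, and the paper offers no proof of it --- only computational evidence (Section~\ref{sec_computations}) and a formal framework in which to state it. So there is no ``paper's proof'' to compare against, and your proposal should be judged on whether it actually closes the gap. It does not: what you have written is a research program in which every individual step is itself an open problem, several of them at least as hard as the conjecture you are trying to prove. Concretely: (i) the well-posedness of the network flow past singular times is not established in general (the paper notes that the behavior after a singular time ``is not necessarily uniquely determined,'' and even in two dimensions \cite{2014ilmanen} treats only restricted situations), so your ``semigroup $\Phi_t$ on probability measures'' is not known to exist; (ii) your claim that homogeneity and ergodicity are preserved along the flow because ``the evolution of local patches depends only on a bounded neighborhood with high probability'' is exactly the kind of finite-speed-of-propagation / locality statement that nobody knows how to prove for this flow, precisely because topological changes can cascade; (iii) the ``expected control on curvatures coming from the smoothing effect of the flow'' and the resulting uniform separation are asserted, not derived --- curvature can and does blow up at singular times, and controlling the $\delta$-thickenability of the evolving graph uniformly in $t$ is an open quantitative regularity question; (iv) the $t^{-1/2}$ decay of edge length is an empirical observation from simulations, not a theorem.

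You are right that the genuinely hard part is uniqueness of the limit, and the Lyapunov/LaSalle idea is a reasonable thing to hope for, but you should be aware that no such monotone functional is known even conjecturally in a precise form, and that the existence-and-compactness half of your plan is not ``much more tractable'' --- it already presupposes answers to the main analytic open problems about the flow. A further structural issue: you propose to derive the topological conjecture from the geometric one via the proposition in Section~\ref{sec_geoAndBS}, but that proposition requires uniform control of vertex density, edge-length tails, and degree bounds along the whole trajectory, none of which your plan supplies. In short, the proposal correctly maps the terrain of difficulties but does not constitute a proof, and should not be presented as one; at best it is a statement of intermediate conjectures whose resolution would imply Conjecture~\ref{steady_state_hypothesisb1}.
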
 Note that $G\paren{t}$ might converge topologically to a stationary state in a local sense without ever being stationary itself. For example, the flow starting with an infinite regular hexagonal lattice with a single edge removed is never stationary, but always has the the local statistics of a stationary state. Assuming the previous conjecture, let $\sigma_{n}$ be the universal probability distribution on graphs in $\R^n.$ We conjecture all that all embedded graphs with homogeneous topological cloth, as defined below, converge to  $\sigma_{n}$ for some $n.$ This allows, for example, an embedded graph in $\R^3$ to converge to the universal distribution for planar embedded graphs.

\begin{Definition}
\label{defn_tophomo}
$G\in\mathcal{G}^n$ has a \textbf{homogeneous topological cloth} if there is a probability distribution $\mathcal{S}\paren{G}$ on $\mathcal{G}^\bullet$ such that the sequences $I\paren{C,G_i}$ and $O\paren{C,G_i}$ converge in the Benjamini-Schramm sense to $\mathcal{S}\paren{G}$ for every admissible averaging sequence $F_i.$
\end{Definition}

\begin{conj}[Classification of Orbits]
\label{steady_state_hypothesis10}
Suppose $G\in\mathcal{G}^n$ is an embedded graph with a homogeneous topological cloth and a well-defined time evolution under curvature flow $G\paren{t}.$ Then either $G\paren{t}$ converges topologically to $\sigma_{m}$ for some $m\leq n,$ or it converges to a stationary state.
\end{conj}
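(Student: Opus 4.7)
The plan is to reduce Conjecture~\ref{steady_state_hypothesis10} to Conjecture~\ref{steady_state_hypothesisb1} by identifying, for each homogeneous initial condition $G$, an ``effective dimension'' $m \leq n$ and then applying the universality statement in that dimension. The four steps I would carry out are: (i) assign to $G$ an effective dimension read off from the symmetry group of its geometric cloth, (ii) show this effective dimension is preserved by the network flow across the singular times of Section~\ref{sec_curvature_flow}, (iii) pass to an $m$-dimensional quotient picture and invoke Conjecture~\ref{steady_state_hypothesisb1} there, and (iv) isolate the exceptional initial conditions, which should be exactly those whose flow converges to a stationary state in the local sense.

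For step (i), the natural definition is to let $m$ be the codimension in $\R^n$ of the group of translations under which $\mathcal{P}(G)$ is invariant. Because $G$ is homogeneous the $\R^n$ action on $\mathcal{P}(G)$ is ergodic, so this stabilizer is a closed subgroup; its connected component should be a linear subspace $V \subset \R^n$, and setting $m = n - \dim V$ gives a well-defined integer in $\{0,1,\ldots,n\}$. The case $m = n$ corresponds to a cloth that is not statistically cylindrical, while the case $m = 0$ forces $\mathcal{P}(G)$ to be supported on a single $\R^n$-translation orbit, which one expects to be consistent only with stationary configurations. The intermediate cases are the ones that should flow toward $\sigma_m$ for $m < n$.

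For step (ii), the key observation is that the network flow is $\R^n$-translation equivariant, and therefore descends to an evolution of the geometric cloth that commutes with the translation action. In particular the translation stabilizer of $\mathcal{P}(G(t))$ can only grow along the flow: any symmetry present at $t = 0$ is preserved, modulo the analytic subtleties introduced by singular times. If one can additionally rule out the sudden appearance of new statistical symmetries at generic times (plausible because the topological surgeries of Section~\ref{sec_curvature_flow} are local), then the effective dimension is constant along the flow. Applying Conjecture~\ref{steady_state_hypothesisb1} to the quotient cloth in $\R^m$ would then yield convergence of $s^\ast \mathcal{P}_r(G(t))$ to $\sigma_m$. The stationary case of step (iv) should be pinned down by the gradient-flow structure of~\cite{2014elsey}: if the averaged length density fails to exhibit the universal $t^{-1/2}$ decay, the pointwise equation $\partial_t \gamma = \kappa \vec{n}$ must vanish $\mathcal{P}(G(t))$-almost everywhere, forcing straight edges meeting at $2\pi/3$.

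The obstacles are severe and largely analytic. The entire scheme is conditional on Conjecture~\ref{steady_state_hypothesisb1}, which is the real heart of the matter; without a construction or characterization of $\sigma_n$, step (iii) is empty. Even granting that conjecture, long-time existence of the network flow from a general infinite, homogeneous initial condition is known only in rather restricted planar settings (see~\cite{2014ilmanen, MNPS16}), and controlling the interaction between the singular times, homogeneity, and the effective dimension is delicate: every surgery must itself be distributed in an ergodic pattern, and must not introduce spurious translation invariances. The hardest step, in my view, is ruling out intermediate asymptotic regimes in which the rescaled cloth fails to converge at all; this will likely demand a monotone Lyapunov quantity along the flow --- a rescaled length or entropy --- that is sharp enough to force weak convergence of $s^\ast \mathcal{P}_r(G(t))$ rather than mere precompactness of the family of rescaled cloths.
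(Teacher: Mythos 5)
The statement you are trying to prove is labeled a \emph{conjecture} in the paper, and the paper offers no proof of it --- it is posed as an open problem, explicitly conditional on the (also open) Universality Conjecture~\ref{steady_state_hypothesisb1}. So there is no argument in the paper to compare yours against, and any submission here should either be an unconditional proof (which would be a major result well beyond the paper's scope) or should be clearly framed as a conditional reduction. Your proposal is the latter, and you are candid about that, but even as a reduction it has a concrete defect at its foundation. Your ``effective dimension'' in step (i) is read off from the translation stabilizer of the geometric cloth $\mathcal{P}(G)$; but $\mathcal{P}(G)$, being a weak limit of pushforwards of normalized Lebesgue measure over balls of growing radius, is invariant under \emph{all} of $\R^n$ whenever it exists (the paper notes the cloth does not depend on the choice of origin). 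So your subspace $V$ is always $\R^n$ and your $m$ is always $0$: the invariant collapses and cannot distinguish the cases the conjecture is about. What the paper actually intends by ``converges to $\sigma_m$ for $m\leq n$'' is geometric degeneracy of the graph itself (e.g.\ an embedded graph in $\R^3$ that is supported on a plane converging to the planar universal distribution, cf.\ Figure~\ref{fig_1Din2D} and the surrounding discussion), not a symmetry of the cloth. Note also that the hypothesis of Conjecture~\ref{steady_state_hypothesis10} is homogeneity of the \emph{topological} cloth (Definition~\ref{defn_tophomo}), which you silently replace with ergodicity of the geometric cloth; these are different conditions and the paper does not assert either implies the other.

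Beyond that, step (ii) is internally inconsistent (you argue the stabilizer ``can only grow'' and then conclude it is ``constant''), and the monotonicity claim itself is unsupported across singular times, where the flow is not even uniquely determined. Step (iv)'s derivation of stationarity from failure of $t^{-1/2}$ length decay is an assertion, not an argument: nothing you write rules out, say, oscillatory or logarithmically-corrected coarsening. Since every substantive step either relies on Conjecture~\ref{steady_state_hypothesisb1} or on unproven dynamical control of the cloth along the flow, the proposal should be presented as a discussion of the conjecture's structure rather than as a proof attempt; as a proof it does not get off the ground because its central invariant is degenerate.
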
 We would not be surprised if $\sigma_{n}=\sigma_{m}$ for all $m,n\geq 3.$

\begin{conj}[Universality Conjecture for Local Geometric Convergence]
\label{steady_state_hypothesisa1}
There exists a probability distribution $\Xi_n$ on $\mathcal{G}^n$ such that  that any network flow $G\paren{t}$ with homogeneous initial condition $G\paren{0}\in\mathcal{G}^n$converges in the scale-free local geometric sense  to $\Xi_{n}$ or a stationary state as $t\rightarrow\infty.$
\end{conj}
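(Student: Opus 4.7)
The plan is to attempt this in three broad stages: preservation of the structural hypotheses under the flow, extraction of subsequential limits, and (the hardest part) rigidity/uniqueness of the limit. Throughout, I would work with the rescaled flow $\tilde G(t)=\rho_{1/\widebar{g}_{G(t)}}G(t)$, since only scale-free convergence is at issue, and I would choose $g$ to be the characteristic length scale implicitly fixed by the $t^{-1/2}$ coarsening rate (for instance, the infimal $r$ such that a ball of radius $r$ generically contains at least one full edge, as mentioned in Section~\ref{sec_scaleFree}).

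First I would try to show that homogeneity propagates: if $G(0)$ is homogeneous, then $G(t)$ admits a smooth geometric cloth $\mathcal{P}(G(t))$ and the $\R^n$ translation action remains ergodic with respect to it. The flow commutes with translations, so formally $\mathcal P(G(t))$ should just be the pushforward of $\mathcal P(G(0))$ by the time-$t$ map on $\mathcal{G}^n$; the real content is that the pushforward is well defined up to the singular times of the network flow, which requires a short-time existence/uniqueness result for homogeneous initial data, presumably via~\cite{2014ilmanen}. Preservation of ergodicity then follows from the fact that the time-$t$ map intertwines translations. I would also need to verify that the characteristic length scale $\widebar g_{G(t)}$ is finite and positive for all sufficiently large $t$, which is essentially the statement that coarsening proceeds at a definite rate.

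Second, I would extract subsequential scale-free limits using the machinery of Section~\ref{sec_USS}. The key technical input is a uniform separating bound: I would try to show that for $t$ large, the rescaled cloths $\{s^*\mathcal P_r(\tilde G(t))\}_{t}$ are concentrated on the sets $X_M$ of $1/M$-thickenable graphs with bounded Frenet--Serret curvatures, with $M$ independent of $t$. Heuristically this should follow from the fact that the flow smooths edges (curvature decreases in $L^\infty$ along $\kappa\vec n$ flow on arcs between singular times) and that Herring's condition plus the $t^{-1/2}$ scaling forces a uniform lower bound on vertex separation after rescaling. Given uniform separation, $X_M$ is compact by Proposition~\ref{prop2}, so Prokhorov plus Theorem~\ref{thm_convergence} give a convergent subsequence $\tilde G(t_k)\to \Xi$ in the scale-free local geometric sense, and $\Xi$ is translation invariant by the same intertwining argument as above.

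The hard part, and really the substance of universality, is rigidity: showing that every subsequential limit $\Xi$ is the same, and depends only on $n$ (not on the homogeneous initial data). I would look for a Lyapunov-type functional $L$ on translation-invariant probability measures on $\mathcal G^n$ whose only minimizers among homogeneous non-stationary measures are a one-parameter family related by dilation; the natural candidate is the expected edge length per unit volume under $\mathcal P(G(t))$, normalized by the characteristic length scale, since the network flow is gradient flow for total length. A subsequential limit would then have to be self-similar under the flow (the rescaled dynamics fixes it), and the conjecture would reduce to classifying self-similar homogeneous solutions as either the universal $\Xi_n$ or a stationary state. I expect this self-similar classification to be the dominant obstacle: it is essentially a mean-field fixed-point problem for an infinite-dimensional nonlinear semigroup, with no linearization argument available, and the stationary alternative in the conjecture reflects exactly the family of degenerate fixed points (regular configurations like the hexagonal lattice) that one cannot rule out by soft arguments alone. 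Short of proving this, the realistic output of the plan is a conditional theorem: \emph{any} scale-free subsequential limit must be a homogeneous self-similar solution, which already justifies the computational search carried out in Section~\ref{sec_computations}.
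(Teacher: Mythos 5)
The statement you are trying to prove is stated in the paper as a \emph{conjecture}, and the paper offers no proof of it --- only computational evidence (Section~\ref{sec_computations}) and the supporting framework of Sections~\ref{sec_geoConv}--\ref{sec_hom}. The paper itself says that a mathematical understanding of the long-term behavior of the network flow is beyond current knowledge. So there is no ``paper proof'' to compare against; the only question is whether your plan closes the gap, and it does not. You candidly flag the rigidity/uniqueness step as the dominant obstacle, and that honesty is warranted: classifying homogeneous self-similar solutions of the rescaled flow is precisely the open problem the conjecture encodes, and nothing in the paper (or in~\cite{2014ilmanen}) supplies it. Your proposed Lyapunov functional also does not work as stated: the expected edge length per unit volume decreases to zero under coarsening (that is what the $t^{-1/2}$ law says), so it cannot have the universal state as a minimizer; after normalizing by the characteristic length scale you get a dimensionless quantity for which no monotonicity is known, and the gradient-flow structure for total length of a finite network does not descend to a variational principle on translation-invariant measures.

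The earlier stages also contain gaps that are not merely technical. The pushforward definition of $\mathcal{P}(G(t))$ presumes a well-defined time-$t$ map on $\mathcal{G}^n$, but the paper explicitly notes that the behavior of the flow after a singular time is not necessarily uniquely determined, and for a homogeneous (hence infinite) initial condition singular times are dense in $[0,\infty)$ with probability one; there is no continuation result in this setting. The uniform separation claim is worse: near every topological change an edge shrinks to a point, so in any time window a positive density of local configurations fails $\delta$-thickenability for every fixed $\delta$ relative to the rescaled length; to get uniform separation you would need to show that the volume fraction occupied by near-singular configurations is small uniformly in $t$, which is itself an unproven quantitative statement about the statistics of the flow. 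Your conditional conclusion --- that any subsequential scale-free limit is a homogeneous self-similar measure --- would be a worthwhile theorem if the propagation and tightness steps were actually established, but as written each stage rests on heuristics, so the proposal is a research program rather than a proof.
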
 It is conjectured that curvature flow on graphs never results in an edge with multiplicity greater than one~\cite{2014ilmanen}. Under this assumption, the corresponding universality conjecture in terms of local geometric convergence of graphs with multiplicity in $\check{\mathcal{G}}^n$ would be equivalent to the above.

\subsection{Statement in Terms of Invariant Measures}

We defined local geometric convergence in terms of individual graphs in order to find an analogue of Benjamini-Schramm graph limits for embedded graphs, and to develop a language that would be amenable to computation. However, a perhaps more traditional way to state a universality conjecture about the behavior of graphs evolving under curvature flow is in terms of invariant measures:

\begin{conj}
\label{steady_state_hypothesis_ergodic}
Let $\hat{\Omega}$ be curvature-flow on embedded graphs in $\R^n,$ rescaled to keep a characteristic length scale equal to one. There exists a unique $\hat{\Omega}$-invariant, homogeneous probability measure $\nu$ on $\mathcal{G}^n$ such that $\nu$-almost all graphs do not evolve to a stationary state under $\hat{\Omega}.$ Furthermore, $\hat{\Omega}$ is ergodic with respect to $\nu.$

Such a measure $\nu$ can be obtained by choosing any homogeneous initial condition $G\in\mathcal{G}^n$ that does not converge to a stationary state, and taking the limit of $\mathcal{P}\paren{G\paren{t}}$ as $t\rightarrow\infty.$ 
\end{conj}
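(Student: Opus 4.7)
The plan is a Krylov--Bogolyubov construction of $\nu$ from a typical trajectory, followed by a separate argument for uniqueness and ergodicity. Fix a homogeneous initial condition $G_{0}\in\mathcal{G}^{n}$ whose rescaled orbit $\{\hat{\Omega}^{t}G_{0}\}$ does not converge to a stationary state, and consider the time-averaged measures
\s{\nu_{T}=\frac{1}{T}\int_{0}^{T}\hat{\Omega}^{t}_{*}\,\mathcal{P}(G_{0})\,dt.}
The first step is to establish tightness of $\{\nu_{T}\}$ on $\mathcal{G}^{n}$. Because the network flow instantaneously regularizes edges (curvatures are bounded on any fixed time interval bounded away from singularities) and the rescaling $\hat{\Omega}$ keeps a characteristic length scale fixed, one should be able to show that $\{\nu_{T}\}$ is uniformly separating in the sense of Definition~\ref{defn_unifSep}, hence tight by compactness of the sets $X_{\delta,D}$ in Proposition~\ref{prop2}. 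Prokhorov then supplies a subsequence $\nu_{T_{k}}\to\nu$ weakly, and $\hat{\Omega}$-invariance of $\nu$ follows from the Krylov--Bogolyubov identity $\hat{\Omega}^{s}_{*}\nu_{T}-\nu_{T}=O(s/T)$.

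Next I would verify that $\nu$ inherits the structural properties required. Translation invariance on $\mathcal{G}^{n}$ is preserved because $\hat{\Omega}$ commutes with translations and each $\mathcal{P}(G_{0}(t))$ is already translation-invariant by homogeneity of the initial condition (this property is preserved in time since the flow is local and equivariant). For homogeneity of $\nu$, namely ergodicity of the $\mathbb{R}^{n}$ action, I would use that ergodicity of translations transfers under time averaging: any translation-invariant Borel $A\subset\mathcal{G}^{n}$ satisfies $\nu_{T}(A)=\mathcal{P}(G_{0})(\hat{\Omega}^{-t}A)$ averaged in $t$, and $\hat{\Omega}^{-t}A$ is again translation-invariant, so by ergodicity of $\mathcal{P}(G_{0})$ each term is $0$ or $1$. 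A mild continuity argument in $t$ (using that the flow is continuous in the varifold topology away from singular times and that singular times are negligible) then forces $\nu(A)\in\{0,1\}$.

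For uniqueness and ergodicity of $\hat{\Omega}$ the strategy has to change, because Krylov--Bogolyubov alone gives many invariant measures. The natural route is to extract a stronger universality statement from the flow itself: show that for any two homogeneous, non-stationary initial conditions $G$ and $G'$, the difference of time-averaged geometric cloths $\hat{\Omega}^{t}_{*}\mathcal{P}(G)-\hat{\Omega}^{t}_{*}\mathcal{P}(G')$ tends to zero in the weak topology on some separating family of local geometric properties. This is essentially Conjecture~\ref{steady_state_hypothesisa1} in disguise; its role here is that it immediately gives both uniqueness of $\nu$ (any other invariant, homogeneous, non-stationary $\nu'$ could be disintegrated onto non-stationary homogeneous $G$'s via Theorem~\ref{thm_convergence} applied to almost every sample, each of which then has the same cloth limit $\nu$) and ergodicity of $\hat{\Omega}$ with respect to $\nu$ (any non-trivial $\hat{\Omega}$-invariant decomposition of $\nu$ would produce two distinct invariant homogeneous measures, contradicting uniqueness). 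The final clause of the conjecture, that $\nu$ is obtained as $\lim_{t\to\infty}\mathcal{P}(G(t))$, then follows from uniqueness applied to every subsequential Krylov--Bogolyubov limit built from the orbit of $G$.

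The main obstacle is, squarely, the mixing input for uniqueness. Steps one and two are schematic but tractable modulo analytic control of the rescaled flow away from singularities; they reduce the conjecture to showing that \emph{homogeneity is a basin-of-attraction condition} for $\hat{\Omega}$. Nothing in the present framework forces this -- coexistence of distinct universality classes is a priori consistent with everything proved so far -- and any genuine proof will need nontrivial PDE input about how the network flow averages out local configurations, a result far beyond what is currently known even in $\mathbb{R}^{2}$ where the results of~\cite{2014ilmanen} give only short-time existence and partial regularity. Showing that the rescaled trajectory is uniformly separating (the tightness step) is likely the second hardest ingredient, since it amounts to a priori bounds on curvature and vertex separation that survive all topological transitions.
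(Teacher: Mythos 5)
This statement is labeled a \emph{conjecture} in the paper, and the paper offers no proof of it; there is nothing to compare your argument against, and your proposal is not a proof either --- it is a reduction of the conjecture to other open conjectures plus analytic input that you yourself acknowledge is unavailable. The decisive gap is in your uniqueness/ergodicity step: you invoke ``a stronger universality statement,'' which you correctly identify as Conjecture~\ref{steady_state_hypothesisa1} in disguise. But that universality statement \emph{is} the mathematical content of Conjecture~\ref{steady_state_hypothesis_ergodic}; assuming it makes the argument circular rather than a proof. The Krylov--Bogolyubov portion is a sensible scheme for \emph{existence} of some invariant measure, but even there two steps are unsupported: (i) uniform separation of $\{\nu_T\}$ requires a priori bounds on curvature and vertex separation that persist through the topological transitions of the flow, and no such bounds are known (the paper only cites short-time existence results in $\R^2$); and (ii) the paper explicitly notes that the behavior of the network flow after a singular time is not necessarily uniquely determined, so $\hat{\Omega}$ is not a well-defined semigroup on $\mathcal{G}^n$, which undermines the pushforward $\hat{\Omega}^t_*$ and the identity $\hat{\Omega}^s_*\nu_T-\nu_T=O(s/T)$ before the construction even starts.

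Two smaller but genuine issues. First, your homogeneity-transfer argument assumes that homogeneity of $G(0)$ is preserved in time (``this property is preserved in time since the flow is local and equivariant''); locality and equivariance give translation invariance of the cloth, but ergodicity of the translation action is not obviously preserved through singular times, and the paper nowhere claims it is. Second, your disintegration step for uniqueness (``any other invariant, homogeneous, non-stationary $\nu'$ could be disintegrated onto non-stationary homogeneous $G$'s via Theorem~\ref{thm_convergence}'') misuses that theorem, which concerns equivalent characterizations of local geometric convergence for a uniformly separating sequence of graphs, not an ergodic decomposition of a measure on $\mathcal{G}^n$. Your closing paragraph honestly identifies the central obstruction --- that nothing in the framework forces homogeneity to be a basin-of-attraction condition --- and that candor is the most accurate part of the proposal: what you have written is a plausible roadmap conditional on the very universality the conjecture asserts, not a proof.
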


\section{Computational Evidence}
\label{sec_data}
\label{sec_comp}
\label{sec_computation}
We test the universality conjectures stated in the previous section for embedded graphs in two dimensions We simulated curvature flow of graphs embedded in $\mathbb{T}^2$ using Jeremy Mason's implementation of the algorithm developed by Lazar et al in~\cite{2010lazar}. It is based on the von Neumann - Mullins Relation, which gives the rate of change of the area of a component of the complement in terms of the number of edges on its boundary. 

We use two kinds of random initial conditions for our simulations. The first is the Voronoi diagram of Poisson distributed points, as depicted in Figure~\ref{fig:vor}. The other, shown in \ref{perturbed_lattice} is a perturbed hexagonal lattice. It is generated by perturbing the vertices of the dual triangular lattice, then computing a Voronoi diagram.

\subsection{Testing for Local Topological Convergence}
\label{sec_swatchComp}
\label{sec_computations}
\begin{figure}
\center
\includegraphics[height=7cm]{/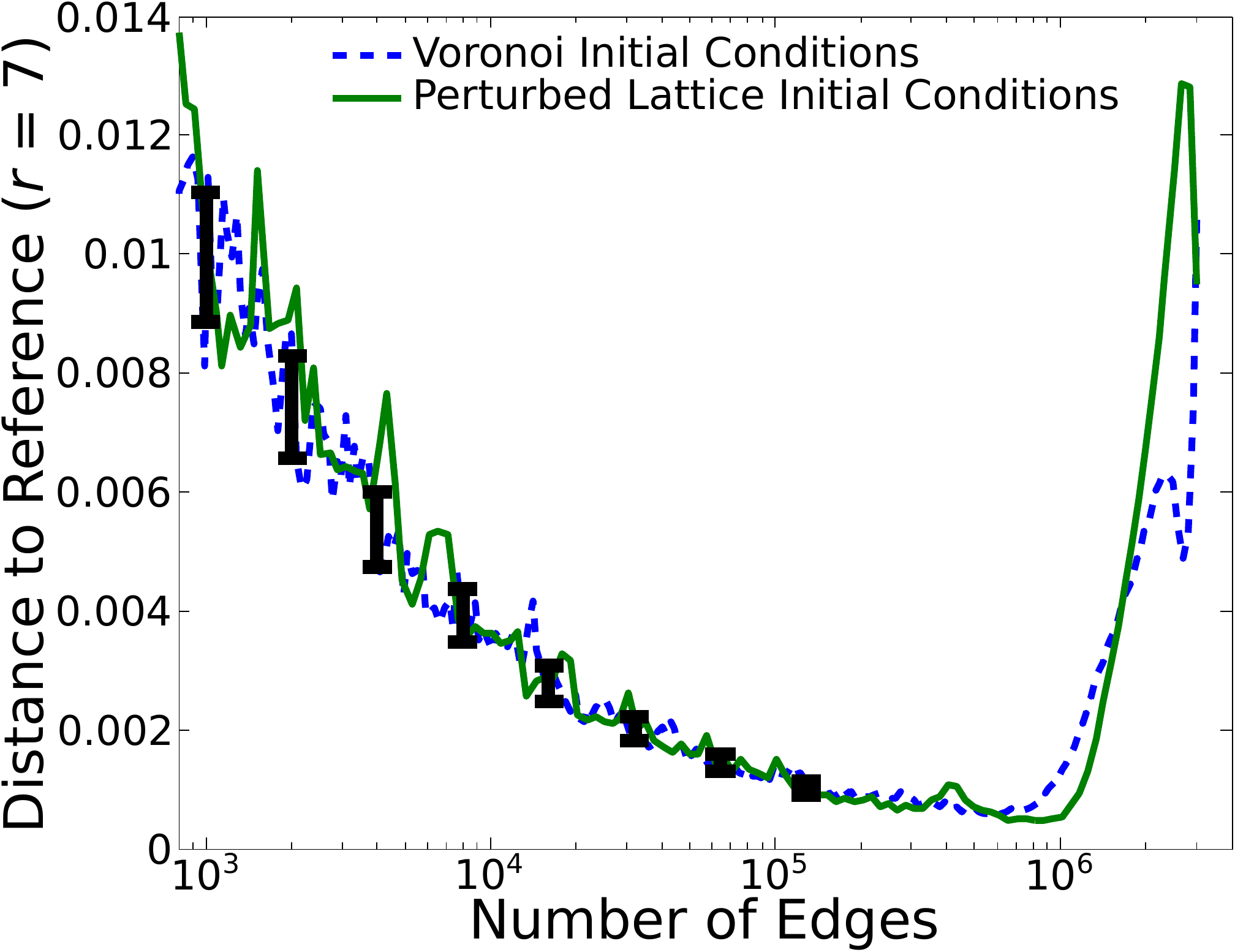}
\caption{\label{fig_2D1}Distances of a simulations starting with both initial conditions to the candidate universal state. Error bars show the standard deviation of the distance of a representative subsample of the candidate state to the candidate state.}
\end{figure}

We test the for local topological convergence by tracking the distance $d_r$ of a simulated graph throughout its evolution to a candidate universal state. The candidate universal state is taken from a large simulation for which all measured properties have converged. We sample swatches of a given radius at each vertex, and use the program \textit{Nauty} to classify them up to graph isomorphisms~\cite{nauty}.

Separate implementations of these algorithms were programmed by both the author and Jeremy Mason. The two programs work for general regular cell complexes, not just those of dimension one. More details on the implementation is included in~\cite{2016schweinhart}.

Figure~\ref{fig_2D1} shows the distance from from two simulations - one starting from a Voronoi graph and the other a perturbed hexagonal lattice - to a large candidate universal state. The candidate universal state was taken from a different simulation starting with a Voronoi graph with $6.0 \times 10^7$ edges. The evolution is parametrized by the number of edges in the graph, a decreasing function of time. Thus, these figures are read from right to left. All simulations start at initial conditions far from the candidate state, before decreasing toward the universal state. Finite size effects cause the distances to increase toward the end of the simulation.

The distance from both simulations to the candidate universal state decreases very rapidly. However, in none of these systems does the distance to the candidate universal state go to zero. To quantify the finite size effects, we constructed representative subsamples of the candidate universal state. The error bars in Figure~\ref{fig_2D1} show one standard deviation above and below the mean of the distance of a representative subsample of a certain size to the candidate universal state. The two simulations are within the bars from between 100,000 and 1000 edges, indicating that the deviation of the distance from zero is explainable due to finite size effects.

\cite{2016schweinhart} contains a more detailed analysis of these results, as well as similar ones for the network flow in three dimensions.

\subsection{Testing for Local Geometric Convergence}
\begin{figure}
\center
\subfloat[]{%
	\label{HEMD_1}{%
		\includegraphics[width=.45\textwidth]{%
			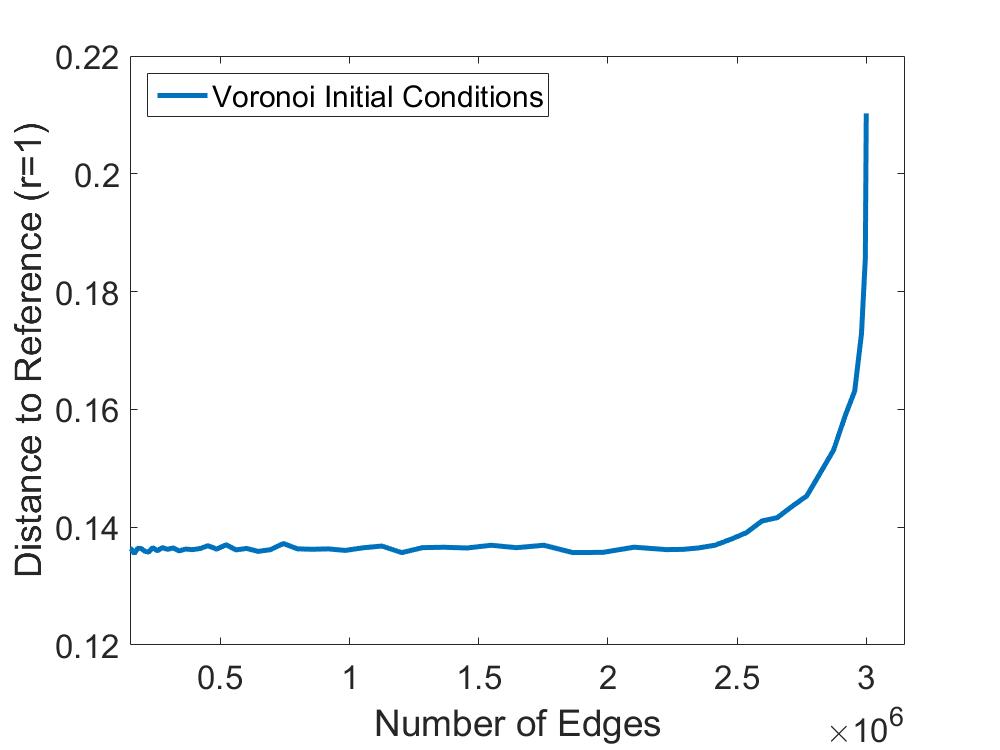}}}
\hspace{20pt}
\subfloat[]{%
	\label{HEMD_2}{%
		\includegraphics[width=.45\textwidth]{%
			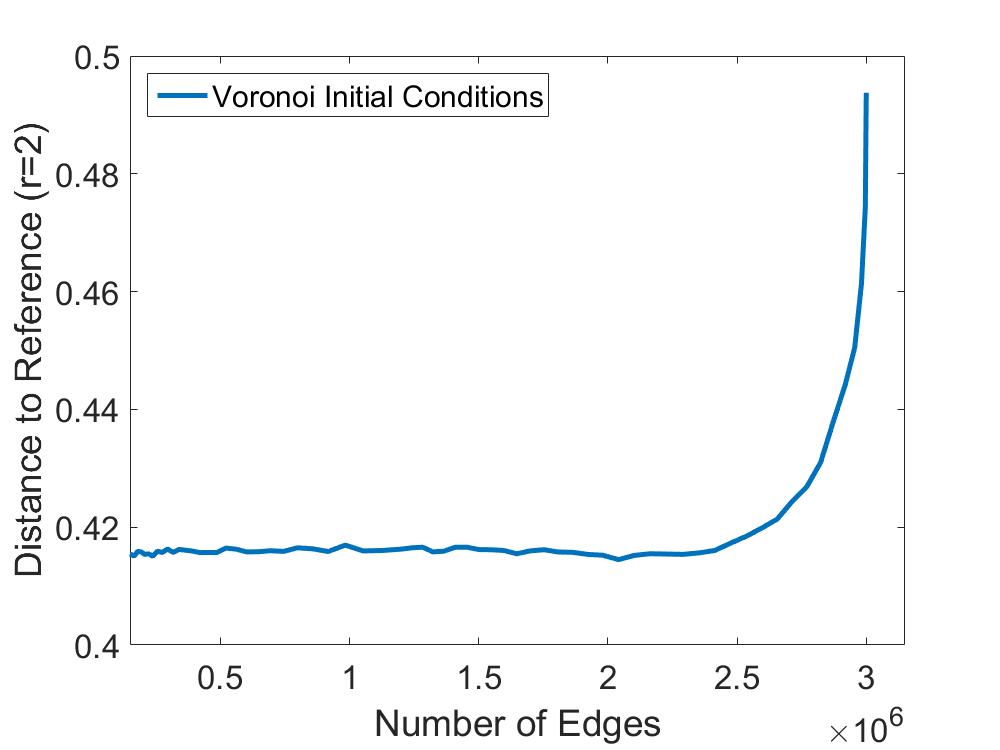}}}
\caption{\label{Fig_HEMD}Plots showing the local Hausdorff EMD from a simulation to a candidate universal state with (a) $r=1$ and (b) $r=2.$ Data from perturbed lattice initial conditions will also be included in a final draft.}
\end{figure}

To test for local geometric convergence, we rescale a sequence of graphs $\set{G_i}$ so that the average edge length of each equals one, and test the weak convergence and uniform separability of $\mathcal{P}_r\paren{G_i}$ for several values of $r.$ 

To check weak convergence, we estimate the earth mover's distance (first Wasserstein distance) induced by the local Hausdorff distance. We sample $10^4$ random $r$-balls in each $G_i$ and a candidate universal state $G,$ compute a distance matrix in $d_L$ between these samples, and use the transport package in $R$~\cite{transport} to determine the earth mover's distance. Plots of this distance for $r=1$ and $r=2$ are shown in Figure~\ref{Fig_HEMD}. As before, the number of edges decreases with time, so the plots are meant to be read from left to right. In each, the distance to the candidate state decreases rapidly. Note that this convergence is faster than in Figure~\ref{fig_2D1}, indicating that the local properties at small radius converge much faster than those at larger radii.   

For $G\in\mathcal{G}_r^2,$ let $\Delta\paren{G}$ be the supremal $\delta$ so that $G$ is $\delta$-thickenable. Note that the curvature of $G$ is bounded by $\Delta\paren{G}^{-1},$ by the definition of $\delta$-thickenability. Figure~\ref{fig_thicken} depicts a graph $H\in\mathcal{G}_2^2$ sampled from the candidate universal state for which $\Delta\paren{H}=.266.$ 

To check uniform separation, we computed the probability distribution $\Delta$ for $10^5$ random $r-$balls in $G_i.$  Note that the conditions in the definition of $\delta$-thickenability have direct analogues for discretized graphs. For example, we use the discrete global radius of curvature~\cite{1999gonzales} as a stand-in for the maximum $\epsilon$ so that an edge has an $\epsilon$-tube. To study the tail of $\mu_i,$ we plotted percentile curves of these distributions as a function of the number of edges. This is shown in Figure~\ref{Fig_thickenability_check} for $r=1.$ Each plot appear to approach to limiting values in the mid-range of the evolution, and are bounded uniformly away from zero. They are certainly uniformly bounded away from $0.$ This supports the hypothesis the sequence is uniformly separated. Interestingly, the percentage of samples of the candidate universal state where each of the following was the limiting factor for $\Delta$ is approximately 89.4\% for the minimum distance of an edge or vertex to the boundary, 5.2\% for the minimal separation between vertices, 2.2\% for the global radius of curvature, and .6\% for the distance between edges outside of a neighborhood of the vertices. The remaining 2.6\% of the samples were empty. This is unsurprising because edges meet at vertices at angles very close to $2\pi/3,$ and the grains in the candidate universal state appear to be quite ``round'' (as can be seen in Figure~\ref{ss_2D}).

\begin{figure}
\center
\includegraphics[height=7cm]{/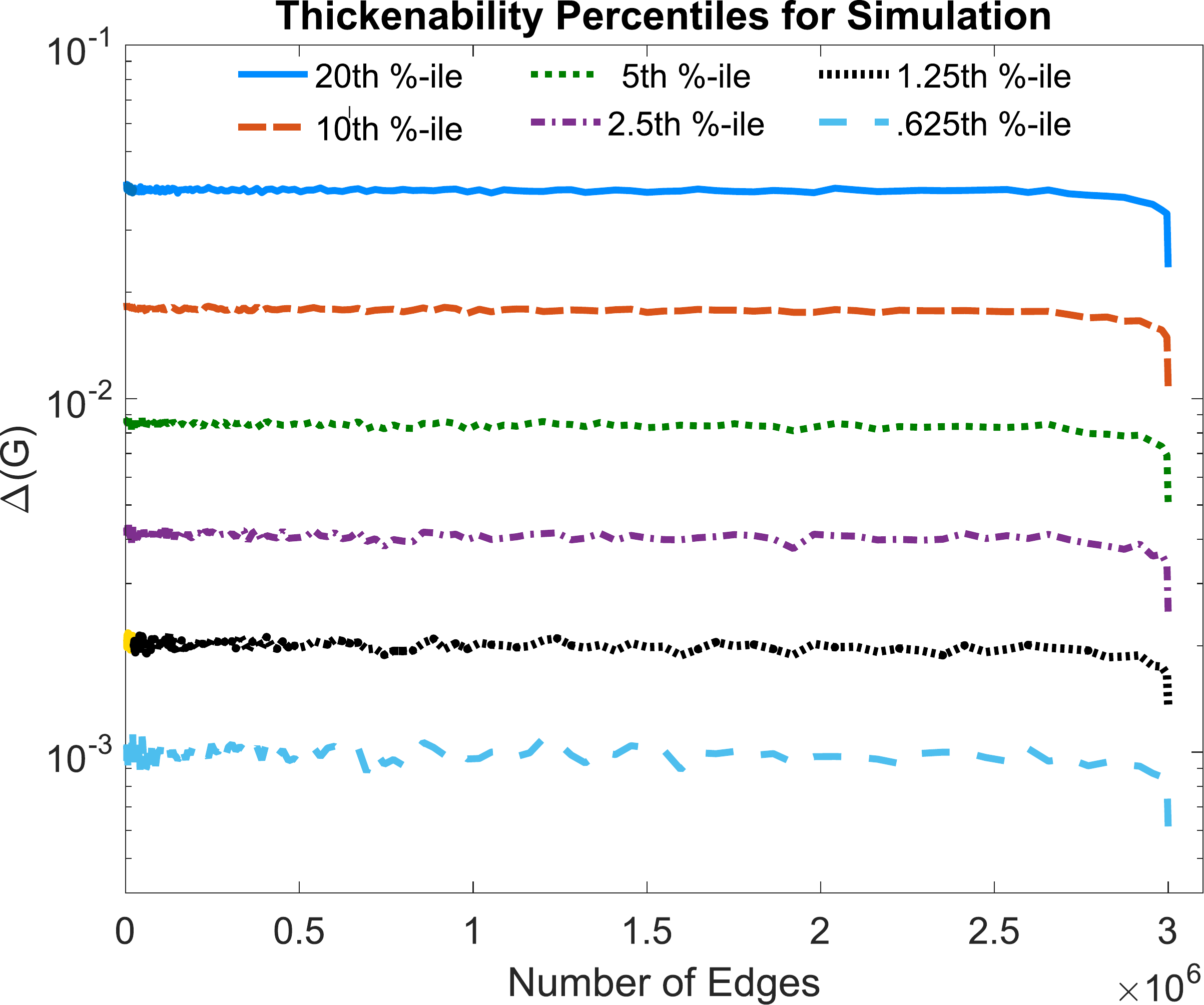}
\caption{\label{Fig_thickenability_check}Percentile values of $\Delta\paren{G}$ for a graph evolving by curvature flow.}
\end{figure}

We used the the spatial searching module of CGAL to improve the speed of both the computation of the local Hausdorff distance and of $\Delta\paren{G}$~\cite{CGAL,CGALspatial}. In the future, we hope to further optimize computation of the Hausdorff earth mover's distance by taking $d_L$ mod rotations. 

\section{Conclusion}
Curvature flow on graphs and cell complexes is a process of great mathematical interest, with important physical applications in materials science. We defined notions of local topological convergence and local geometric convergence for embedded graphs in $\R^n,$ and studied their properties. Using these concepts, we stated several universality conjectures for curvature flow on graphs,  formalizing observations from experiments and computer simulations of these systems.

It is our hope that this paper will stimulate interest in these problems, eventually leading to a mathematically rigorous understanding of this beautiful phenomenon.

\section*{Acknowledgments}
The author would like to thank Robert MacPherson and Jeremy Mason and in particular for discussions that led to the ideas in this paper. The method of swatches was originally developed in collaboration with them in~\cite{2016schweinhart}. Professor MacPherson was the author's PhD advisor, and suggested curvature flow on graphs as a system of interest.  The author would also like to thank Tom Ilmanen, Ryan Peckner, and Philippe Sosoe for interesting and useful discussions. 

This research was supported by an NSF Mathematical Sciences Postdoctoral Research Fellowship and, before that, the Center of Mathematical Sciences and Applications at Harvard University.

\appendix
\section{Proofs from Section~\ref{sec_graphTop}}
\label{appendix}
\propOne*
\begin{proof}
Let $G_i=r\paren{\check{G}_i}$ and $G=r\paren{G}.$ Also, let $\hat{G}_i=r\paren{G_i}\cup \partial B_r$ and $\hat{G}=r\paren{G}\cup \partial B_r.$ $\set{\hat{G}_i}$ is a sequence of compact subsets of $\R^n,$ so there is subsequence $\set{\hat{H}_i}$ which converges in the Hausdorff distance to a limit $X_0.$ Let $X=X_0\cap B_r.$ We will show that $X=G.$

Let $x\in X$ and suppose $x\notin G.$ Then there is an $\epsilon>0$ so that $B_\epsilon\paren{x}\subset B_r$ and $B_\epsilon\paren{x}\subset B_r\cap G= \emptyset.$ Choose a continuous function $g:\R^n\times \mathbb{RP}^{n-1} \rightarrow \brac{0,1}$ that is one on $B_{\epsilon/2}\paren{x}\times \mathbb{RP}^{n-1}$ and zero outside of $B_\epsilon\paren{x}\times \mathbb{RP}^{n-1}.$ There is a positive integer $I$ so that for all $i>I,$ there is a point of $G_i$ within distance $\epsilon/4$ of $x.$  
\s{\mathcal{H}^1\paren{H_i\cap B_{\epsilon/2}\paren{x}}>\frac{\epsilon}{2}}
because $H_i$ is connected. Therefore for sufficiently large $i$
\s{\int_{\R^n\times \mathbb{RP}^{n-1}}g\mu_{\check{G}_i}>\frac{\epsilon}{2}}
which is a contradiction, because $\mu_{\check{G}_i}\rightarrow \mu_{\check{G}}$ and the supports of $g$ and $\mu_{\check{g}}$ are disjoint. Thus $x\in G.$

Let $x\in G$ and $\epsilon>0$ be sufficiently small so that $B_\epsilon\paren{x}$ is contained in the open ball of radius $r.$ Let $g:\R^n\rightarrow \brac{0,1}\times \mathbb{RP}^{n-1}$ be a positive function that is one on $B_{\epsilon/2}\paren{x}\times \mathbb{RP}^{n-1}$ and zero outside $B_\epsilon\paren{x}\times \mathbb{RP}^{n-1}.$ Because $G$ is an embedded graph, the mass of $G\cap B_{\epsilon/2}\paren{x}$ is at least $\epsilon,$ so
\s{\int_{\R^n\times \mathbb{RP}^{n-1}}g\mu_{\check{G}}>\epsilon} For sufficiently high $i,$
\s{\abs{\int_{\R^n\times \mathbb{RP}^{n-1}}g\mu_{\check{G}}-\int_{\R^n\times \mathbb{RP}^{n-1}}g\mu_{\check{G}_i}}<\frac{\epsilon}{2}}
so
\s{\mu_{\check{G}_i}\paren{B_{\epsilon}\paren{x}}\geq \int_{\R^n\times \mathbb{RP}^{n-1}}g\mu_{\check{G}_i}\geq \int_{\R^n\times \mathbb{RP}^{n-1}}g\mu_{\check{G}}-\frac{\epsilon}{2}\geq \frac{\epsilon}{2}}

and $G_i\cap B_{\epsilon}\paren{x}\neq \emptyset.$ $\epsilon$ can be made arbitrarily small, so there a sequence of points $\set{x_i}$ with $x_i\in G_i$ and $x_i\rightarrow x.$ Therefore $x\in X$ and $X=G,$ as desired.
\end{proof} 

\begin{lemma}
\label{shellLemma}
If an edge has a tubular neighborhood of radius $\delta,$ then its intersection with any ball of radius less than $\delta/2$ is connected. Furthermore, the intersection of such an edge a sphere of radius less than $\delta/2$ has at most two components.
\end{lemma}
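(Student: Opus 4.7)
The plan is to parametrize the edge $e$ by arclength as $\gamma:I\to\mathbb{R}^n$ and, for the given centre $p$, study the smooth function $h(s)=|\gamma(s)-p|^{2}$, whose derivatives are $h'(s)=2(\gamma(s)-p)\cdot\gamma'(s)$ and $h''(s)=2+2(\gamma(s)-p)\cdot\gamma''(s)$. Two ingredients drive both parts of the lemma. First, since $e$ has a tubular neighbourhood of radius $\delta$, its curvature is bounded by $1/\delta$, and therefore $h''(s)\geq 2\bigl(1-|\gamma(s)-p|/\delta\bigr)$; in particular $h$ is strictly convex on the set where $|\gamma(s)-p|<\delta$. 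Second, at any critical point $s^{*}$ of $h$ the vector $\gamma(s^{*})-p$ is normal to $e$, so $\gamma(s^{*})$ is a perpendicular foot of $p$ on $e$; uniqueness of the closest point projection on $T_{\delta}(e)$ implies that any perpendicular foot with $|\gamma(s^{*})-p|<\delta$ must coincide with the nearest point $q_{0}:=\pi(p)$.

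For the first claim, I would first dispose of the trivial case $e\cap B_{r}(p)=\varnothing$. Otherwise $d(p,e)<r<\delta$, so $q_{0}\in e\cap B_{r}(p)$ with $|q_{0}-p|<r$. Reparametrize so $\gamma(0)=q_{0}$; then $h(0)<r^{2}$ and $h'(0)=0$. Strict convexity combined with $h'(0)=0$ shows that as $s$ moves away from $0$ in either direction, $h$ is strictly monotone as long as $|\gamma(s)-p|<\delta$, so the component of $\{s:h(s)<r^{2}\}$ containing $0$ is an interval. Suppose for contradiction that a second component exists. Between the two components $h\geq r^{2}$, and $h$ must attain a local maximum at some interior point $t^{*}$; uniqueness of the perpendicular foot forces $|\gamma(t^{*})-p|\geq\delta$, so the curve must exit $B_{\delta}(p)$ before re-entering to produce the second component. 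On the first sub-arc after re-entering $B_{\delta}(p)$, strict convexity together with $h'\leq 0$ at the re-entry boundary either forces $h$ to be strictly monotone decreasing until the next exit (inconsistent with the boundary values both equalling $\delta^{2}$) or produces a local minimum of $h$ with $|\gamma-p|<\delta$; by the second ingredient and injectivity of $\gamma$ this minimum would have to be $\gamma(0)$, a contradiction.

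For the second claim, assume three distinct points $t_{1}<t_{2}<t_{3}$ on $e\cap\partial B_{r}(p)$. Rolle's theorem applied to $h(s)-r^{2}$ yields critical points $u_{1}\in(t_{1},t_{2})$ and $u_{2}\in(t_{2},t_{3})$ of $h$. By the second ingredient, any $u_{i}$ with $|\gamma(u_{i})-p|<\delta$ must satisfy $\gamma(u_{i})=q_{0}$, and by injectivity of $\gamma$ at most one of the $u_{i}$ can do so. The remaining critical point, say $u_{1}$, therefore has $|\gamma(u_{1})-p|\geq\delta>2r$; I would then apply the monotonicity/re-entry analysis of part one to the sub-arc $[t_{1},t_{2}]$, whose endpoints lie on $\partial B_{r}(p)$, to derive the same contradiction.

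The main obstacle will be making the re-entry step in part one fully rigorous: controlling the sign of $h'$ at the point where the curve re-enters $B_{\delta}(p)$, and handling the edge cases where a vertex of $e$ lies inside $B_{\delta}(p)$ so that the perpendicular-foot argument must be reinterpreted as a one-sided condition. Once that step is pinned down, part two follows by the same technique applied to a sub-arc.
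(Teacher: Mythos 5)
Your route (strict convexity of $h(s)=|\gamma(s)-p|^2$ inside $B_\delta(p)$ via the curvature bound, plus uniqueness of the perpendicular foot) is genuinely different from the paper's, which uses no calculus at all: the paper shows that the segment from $x$ to any curve point $\psi(a_3)\in B_\delta(\pi(x))$ lies in the tube and must cross the fiber $\pi^{-1}(\psi(a_2))$ for every intermediate parameter $a_2$, and a triangle inequality through that crossing point gives $d(\psi(a_2),x)<d(\psi(a_3),x)$. That single inequality says the distance to $x$ is monotone on each side of $\pi(x)$ \emph{for all intermediate parameters, whether or not the curve stays in $B_\delta(p)$}, and both claims plus the endpoint cases fall out at once. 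Your argument, by contrast, only controls $h$ on each connected component of $\{s:|\gamma(s)-p|<\delta\}$ separately, and the work of the lemma is precisely in ruling out a second such component coming close to $p$. There are two places where that work is not done. First, your ``second ingredient'' --- that any perpendicular foot at distance $<\delta$ must equal $\pi(p)$ --- does not follow in one line from uniqueness of closest points in $T_\delta(e)$; it is essentially Federer's theorem that $\xi(q+tv)=q$ for unit normals $v$ and all $t<\mathrm{reach}$, and its proof is its own open-closed argument along the normal ray. Since the contradiction in your re-entry step rests entirely on this fact, it cannot be waved through.

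Second, and more seriously, the endpoint case is not a reinterpretation issue but a hole in the dichotomy. On the component of $\{h<\delta^2\}$ created after the curve re-enters $B_\delta(p)$, your two alternatives are ``monotone until the next exit'' or ``interior local minimum''; but if the edge terminates inside $B_\delta(p)$, $h$ can be monotone decreasing down to a value below $r^2$ at the endpoint, producing a second component of $e\cap B_r(p)$ with no interior critical point to contradict. Neither convexity nor the unique-perpendicular-foot fact at $\pi(p)$ rules this configuration out (one can check that the constraint ``every point of $e$ lies outside the open $\delta$-balls tangent to $e$ at $\pi(p)$'' is compatible with an endpoint re-entering $B_r(p)$). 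Excluding it requires exactly the global fiber-separation step the paper uses: for $a_1<a_2<a_3$ with $\psi(a_3)\in B_\delta(\pi(x))$ one gets $d(\psi(a_2),x)<d(\psi(a_3),x)$, so the arc between $\pi(x)$ and any other curve point within $2r$ of it cannot leave $B_\delta(p)$ at all. I would either import that argument wholesale (at which point the convexity machinery becomes unnecessary) or prove the Federer-type statement carefully and then still find a separate argument for terminal points of the edge.
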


\begin{proof}
Let $e$ be an edge parametrized by $\psi:\brac{0,1}\rightarrow\R^n$, and let $\pi:T_{\delta}\paren{e}\rightarrow e$ be the projection map of the tubular neighborhood to $e,$ viewed as a subset of the normal bundle of $e.$ Note that if $y\in e,$ then $\pi^{-1}\paren{y}$ is the set of all points in $T_\delta\paren{e}$ closer to $y$ than any other point of $e.$

Let $\epsilon<\delta<2,$ and $x\in \R^n.$ If $B_\epsilon\paren{x}\cap e \neq \emptyset$
\s{B_{\epsilon}\paren{x} \subset B_{\delta}\paren{\pi\paren{x}}\subset T_{\delta}\paren{e}}
Let $a_1=\psi^{-1}\paren{\pi\paren{x}}$ and suppose that $a_1<a_2<a_3$ with $\psi\paren{a_3}\in B_{\delta}\paren{\pi\paren{x}}.$ The line segment between $\psi\paren{a_3}$ and $x$ is contained $B_{\delta}\paren{\pi\paren{x}},$ and is disconnected by removing the fiber $\pi^{-1}\paren{\psi\paren{a_2}}.$ It follows that there is a $y\in\pi^{-1}\paren{\psi\paren{a_2}}$ with
\s{d\paren{x,\psi\paren{a_3}}=d\paren{y,\psi\paren{a_3}}+=d\paren{y,x}}
so
\s{d\paren{\psi\paren{a_2},x}\leq d\paren{\psi\paren{a_2},y}+d\paren{y,x} < d\paren{y,\psi\paren{a_3}}+d\paren{y,x} =d\paren{x,\psi\paren{a_3}}}
Therefore, the distance to $x$ is monotonically increasing on $\psi^{-1}\paren{B_{\delta}\paren{\pi\paren{x}}}\cap\brac{a_1,1},$ and a similar argument shows it is monotonically decreasing on $\psi^{-1}\paren{B_{\delta}\paren{\pi\paren{x}}}\cap\brac{0,a_1}.$ The desired statements follow immediately.
\end{proof}

\lemmaTwo*
\begin{proof}
Let $G\in\mathcal{G}^n$ and $B_r$ be any ball in $\R^n.$ $B_r$ contains finitely many vertices and edges, so we may choose $\delta_0$ so that any $\delta\leq\delta_0$ satisfies all but perhaps the third condition.

We will examine how pairs of edges approach vertices. Let $e_1$ and $e_2$ be edges of $G$ meeting at a vertex $v.$ Translate $G$ if necessary so that $v=0\in\R^n.$ Let $c_1\paren{t}$  and $c_2\paren{t}$ be unit-speed parametrizations of $e_1$ and $e_2$ with $c_1\paren{0}=c_2\paren{0}=0=v.$ Then if $T_1$ and $T_2$ are the unit tangent vectors (which are distinct by the definition of an embedded graph), $N_1$ and $N_2$ are the unit normal vectors, and $\kappa_1$ and $\kappa_2$ are the curvatures of $c_1$ and $c_2$ at $v$ (with orientations given by the parametrization), we have that
\s{c_1\paren{t}=tT_1+\frac{t^2\kappa_1}{2}N_1+o\paren{t^2}=tT_1+O\paren{t^2}}
and
\s{c_2\paren{t}=tT_2+\frac{t^2\kappa_2}{2}N_2+o\paren{t^2}=tT_2+O\paren{t^2}}
where the orders are taken with respect to the limit $t\rightarrow 0.$ Let $e_1\paren{r}$ and $e_2\paren{r}$ be the values of $c_1\paren{t}$ and $c_2\paren{t}$ at their first intersection with the sphere of radius $r.$ Then
\begin{align*}
\norm{e_1\paren{r}-e_2\paren{r}}&\\
&\geq\norm{c_1\paren{r}-c_2\paren{r}}+\norm{c_1\paren{r}-e_1\paren{r}}+\norm{c_2\paren{r}-e_2\paren{r}}\\
&=r\norm{T_1-T_2}+O\paren{r^2}
\end{align*}
and
\s{\lim_{r\rightarrow 0}\frac{\norm{e_1\paren{r}-e_2\paren{r}}}{r^2}\geq\lim_{r\rightarrow 0}\frac{r\norm{T_1-T_2}+O\paren{r^2}}{r^2}=\infty>1}
Therefore there exists an $\xi>0$ so that for all $\epsilon<\xi,$ the distance between $c_1$ and $c_2$ when they first intersect ball of radius $\epsilon$ is greater than $\epsilon^2.$

Lemma~\ref{shellLemma} implies that $G\cap \partial B_{\epsilon}\paren{v}$ is a finite collection of points, one for each edge adjacent to $v.$ $G$ has finitely many vertices, each of finite degree, so by the previous argument we may chose $\delta_1$ so that any $\delta\leq \delta_1$ satisfies the third condition.

Therefore if $\delta<\min\paren{\delta_0,\delta_1},$ $G$ is $\delta$-thickenable.
\end{proof} 

\begin{lemma}
Let $G^1, G^2\in\mathcal{G}^n_r$ be $\delta_0$-thickenable and $\delta<\frac{\delta_0}{5}.$ If $d_L\paren{G^1,G^2}<\frac{\delta^4}{18},$ there is a canonical combinatorial isomorphism of $G^1$ with $G^2$ so that
\begin{enumerate}
\item Each vertex of $v^1$ $G^1$ is paired with the unique vertex $v^2$ of $G^2$ with $d\paren{v^1,v^2}<\delta.$
\item If $V^1$ is the union of the union of the vertex set of $G^1$ with $\partial B_r,$ and $V^2$ is the corresponding set for $V_2,$ $e^1$ of $G^1$ is matched with the unique edge $e^2$ of $G^2$ so that
\s{d_H\paren{e^1\cap \paren{V^1_{2\delta}\cup V^2_{2\delta}}^c,e^2\cap \paren{V^1_{2\delta}\cup V^2_{2\delta}}^c}<\frac{\delta^2}{2}}
\end{enumerate}
\end{lemma}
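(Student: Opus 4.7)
The plan is to build the combinatorial isomorphism in two stages — matching vertices first, then edges — and to exploit the fact that $\delta_0$-thickenability provides two widely separated scales of geometric rigidity: vertices are at least $\delta_0$ apart, while distinct edges are at least $\epsilon^2$ apart once one leaves $\epsilon$-neighborhoods of the vertices. The tolerance $\delta^4/18$ is chosen to be far below both scales.

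For the vertex step, fix $v^1 \in G^1$. Uniqueness of a vertex $v^2 \in G^2$ with $d(v^1,v^2) < \delta$ is immediate: two such $v^2$ would be within $2\delta < \delta_0$ of each other, violating condition~(1) of $\delta_0$-thickenability for $G^2$. For existence, I would analyze how $G^2$ crosses the sphere $\partial B_{\delta/2}(v^1)$. Applying the earlier shell Lemma and condition~(3) to $G^1$ shows that $G^1$ meets this sphere in exactly $\deg(v^1) \geq 3$ transverse points, one per edge at $v^1$. If $G^2$ had no vertex in $B_\delta(v^1)$, the shell Lemma applied to $G^2$ forces every piece of $G^2$ in $B_{\delta/2}(v^1)$ to cross this sphere twice, in a near-straight fashion (curvature at most $\sim 1/(2\delta)$ by condition~(2)). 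A comparison argument, pairing each sphere-intersection of $G^1$ with a unique nearby point of $G^2$ via $d_L < \delta^4/18 \ll \delta^2$, then forces $\deg(v^1)$ to be even and the tangent vectors of $G^1$ at $v^1$ to pair up in directions differing by at most the total curvature along the connecting arc of $G^2$. The angle lower bound at vertices of $G^1$ extracted from condition~(3) produces a contradiction.

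For the edge step, remove a $2\delta$-tube $V^1_{2\delta} \cup V^2_{2\delta}$ around the vertex sets (together with the boundary shell absorbed into $d_L$). Applying condition~(3) with $\epsilon = 2\delta < \delta_0$ separately to each $G^i$ shows that, on this complement, distinct edges of $G^i$ lie at distance at least $4\delta^2$ apart, which dominates the perturbation $\delta^4/18$. Each compact sub-arc of $e^1 \cap (V^1_{2\delta} \cup V^2_{2\delta})^c$ is therefore within $\delta^4/18$ of a unique edge of $G^2$; gathering over all sub-arcs singles out a matched edge $e^2$, and the Hausdorff bound $\delta^2/2$ in the statement follows by absorbing $\delta^4/18$ into the $\delta^2$-scale tube around $e^1$. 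The incidence claim — that $e^1$ and $e^2$ have matching endpoints under the vertex bijection — follows by tracing each truncated arc towards its endpoints, where the vertex matching identifies the unique vertex of $G^2$ near each vertex of $G^1$.

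The main obstacle is the existence step for the vertex correspondence, because it is the only place where a purely \emph{absence} hypothesis on $G^2$ must be contradicted by a \emph{presence} inference from the Hausdorff data. A sphere-intersection count alone handles odd degrees, but even degrees require combining the curvature bound on edges of $G^2$ with the angle lower bound at vertices of $G^1$ — both consequences of $\delta_0$-thickenability — to rule out a near-straight edge of $G^2$ connecting two differently-directed edges of $G^1$ at $v^1$. The factor $\delta^4/18$ is likely the slack needed to absorb three error sources at once: the $\delta^2$-scale tubular separation of edges, the $\delta$-scale vertex separation, and the angular matching of sphere crossings. Once the vertex step is secured, the edge matching and the incidence verification are routine consequences of the same thickenability inequalities.
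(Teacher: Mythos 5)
Your overall architecture (vertices first, then edges, then incidences) matches the paper's, and your edge-matching and incidence steps are essentially the paper's argument. The gap is in the vertex-existence step, which you correctly identify as the crux but do not close. Your argument leans on ``the angle lower bound at vertices of $G^1$ extracted from condition~(3)'' of thickenability, and no such uniform bound exists. Condition~(3) only forces two edges meeting at a vertex $v$ to separate to distance $\epsilon^2$ by radius $\epsilon$; since edges of a $\delta_0$-thickenable graph may have curvature up to $1/(2\delta_0)$, which exceeds $1$ whenever $\delta_0<1/2$, two edges can leave $v$ at an arbitrarily small angle and achieve the required quadratic separation purely by curving apart. Worse, the definition of embedded graph only requires \emph{distinct} outward tangent vectors, so two edges meeting at angle exactly $\pi$ are allowed, and in that configuration a genuinely straight edge of $G^2$ can shadow both --- no angular contradiction is available. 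Your parity count handles odd degree only, so the even-degree case is left resting on a false premise. A second, related problem is your choice of scale: at radius $\delta/2$, detecting two distinct edges of $G^2$ inside the ball does not produce a vertex of $G^2$ within $\delta$ of $v^1$, because condition~(3) converts proximity below $\epsilon^2$ into a vertex within $\epsilon$; proximity of order $\delta$ therefore only yields a vertex within roughly $\sqrt{\delta}$, and only when $\sqrt{\delta}<\delta_0$, which is not implied by $\delta<\delta_0/5$.

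The paper avoids all of this by working at the quadratic scale. It takes the annulus $A=B_{\delta^2/2}(v^1)\setminus B_{\delta^2/3}(v^1)$: the $\deg(v^1)\geq 3$ edges of $G^1$ at $v^1$ meet $A$ in $\geq 3$ components that are pairwise at distance $\geq(\delta^2/3)^2=\delta^4/9$ by condition~(3) with $\epsilon=\delta^2/3$ (this is exactly where the threshold $\delta^4/18$ comes from), so Hausdorff closeness forces $G^2$ to meet a slightly enlarged $A$ in $\geq 3$ components as well. The shell lemma caps the contribution of any single edge of $G^2$ at two components, so at least two distinct edges of $G^2$ enter $B_{\delta^2/2}(v^1)$; two edges of $G^2$ within distance $\delta^2$ of each other then force, by condition~(3) applied to $G^2$ with $\epsilon=\delta$, a vertex of $G^2$ within $\delta$ of $v^1$. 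No parity, no tangent directions. If you want to salvage your sphere-crossing picture, you must descend to radius $O(\delta^2)$ so that the mere coexistence of two edges of $G^2$ in the ball already triggers the vertex conclusion; at that scale the counting argument closes for every degree and the angle discussion becomes unnecessary.
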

\begin{proof}

First, we will pair the vertices of $G^1$ with those of $G^2.$ Let $v^1$ be a vertex of $G^1,$ $\set{e_i}$ be edges of $G^1$ adjacent to $v^1,$ and
\s{A=B_{\frac{\delta^2}{2}}\paren{v^1}-B_{\frac{\delta^2}{3}}\paren{v^1}}
$\delta$-thickenability implies that the only edges of $G^1$ that intersect $A$ are precisely $\set{e_i},$ and that 
\s{T_{\frac{\delta^4}{18}}\paren{e_i}\cap T_{\frac{\delta^4}{18}}\paren{e_j} \cap A \neq 0 \Rightarrow e_i=e_j}
$d_H\paren{G^1,G^2}<\frac{\delta^4}{18},$ it follows that $G^2\cap A$ has at least $\abs{\set{e_i}}\geq 3$ components. This together with Lemma~\ref{shellLemma} implies that more than one edge of $G^2$ intersects $A.$ The edges of a $\delta$-thickenable graph can approach each other within to within distance $\frac{\delta^2}{2}$ only inside the $\delta$-ball of a vertex (or much closer to the boundary), so there must be a vertex of $v_2$ $G^2$ in $\widebar{B}_{\delta}\paren{v}.$ Furthermore, at most one vertex of a $2\delta$-thickenable graph can be contained within a ball of radius $\delta,$ so this vertex is unique. Repeating the argument with $G^1$ and $G^2$ swapped gives the desired bijection of vertices.

$G^2_{\delta^2/2}\cap \paren{V^2_{\delta}}^c$ is the disjoint union of the $\frac{\delta^2}{2}$-tubes about the edges of $G^2.$ These tubes are connected because of the last condition in the definition of $\delta$-thickenability. The previous paragraph implies that
\s{V^1_\delta\subset V^2_{2\delta}\subset V^1_{4\delta}}
So if $e^1$ is an edge of $G^1,$
\s{e^1\cap \paren{V^1_{2\delta}}^c\subset G^2_{d_{L}\paren{G^1,G^2}}\cap\paren{V^2_{\delta}}^c}
$e^1\cap \paren{V^1_{2\delta}}$ is connected (because of condition 3 in the definition of $\delta$-thickenability) and $d_{L}\paren{G^1,G^2}<\frac{\delta^2}{2}$ so there is a unique edge $e^2$ of $G^2$ so that
\s{e^1\cap \paren{V^1_{2\delta}}\subset N_\frac{\delta^2}{2}\paren{e^2}\cap\paren{V^2_{\delta}}^c}
Repeating this argument with $G^1$ and $G^2$ swapped shows that the edges of of the two graphs are paired bijectively so that
\s{d_H\paren{e^1\cap \paren{V^1_{2\delta}\cup V^2_{2\delta}}^c,e^2\cap \paren{V^1_{2\delta}\cup V^2_{2\delta}}^c}<\frac{\delta^2}{2}}

If $e^1$ is an edge of $G^1$ that is adjacent to a vertex $v^1$ of $G^1,$ $v^2$ is the vertex of $G^2$ paired with it, and $x=e^1\cap \partial B_{4\delta}\paren{v^1},$ then
\s{d\paren{e^2,v^2}\leq d\paren{e^2,v^1}+d\paren{v^2,v^1} < d\paren{e^2,x}+5\delta<\delta_0}
so $e^1$ is adjacent to $v^2.$ Therefore, the bijection between the edges of $G^1$ and $G^2$ is a graph isomorphism, as desired.

\end{proof}

\begin{lemma}
The set $Y_\delta\subset \mathcal{G}^n_r$ of $\delta$-thickenable graphs is closed and equals the disconnected union of finitely many sets of constant topological type. A sequence of graphs in $Y_\delta$ converges if and only if vertices converge to vertices and edges converge to edges in the local Hausdorff distance.
\end{lemma}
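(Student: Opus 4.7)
The plan is to build everything on top of the canonical combinatorial isomorphism furnished by the preceding lemma. Applying that lemma with $\delta_0 = \delta$ and any fixed $\delta' < \delta/5$ produces a threshold $\eta = (\delta')^{4}/18$ such that any two graphs in $Y_\delta$ within $d_L$-distance $\eta$ are canonically combinatorially isomorphic. It follows immediately that the topological-type function is locally constant on $Y_\delta$, so $Y_\delta$ partitions into clopen pieces, one per realized topological type, and in particular each such piece is a union of connected components of $Y_\delta$.

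Finiteness of the set of realized topological types will come from two uniform bounds. Condition (1) of $\delta$-thickenability forces vertices to be pairwise $\delta$-separated, and condition (4) confines them to $B_{r-\delta}$, giving a volume-packing bound on the vertex count. The degree of any vertex $v$ is controlled by Lemma~\ref{shellLemma}, which identifies each edge adjacent to $v$ with a single point of $G \cap \partial B_{\epsilon}(v)$ for small $\epsilon$; the tubular-neighborhood requirement of condition (2) then forces a uniform angular spread of these points around $\partial B_\epsilon(v)$, bounding the degree. Together these give only finitely many combinatorial types.

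For closedness of $Y_\delta$ inside $\mathcal{G}_r^n$, consider $G_i \to G$ in $d_L$ with $\{G_i\} \subset Y_\delta$ and $G \in \mathcal{G}_r^n$. A tail of the sequence shares a single topological type by the first paragraph and comes equipped with canonical isomorphisms to one another, so the matched vertex sequences are Cauchy in $\R^n$ and their limits must be the vertices of $G$ (identified via the local structure of an embedded graph), while the matched edges Hausdorff-converge to the corresponding edges of $G$. It then remains to check each $\delta$-thickenability condition for $G$: the separation, containment, and boundary-shell conditions pass to the limit trivially as closed conditions, while the tubular-neighborhood condition (2) and the adjacency condition (3) require a separate argument since they involve the $C^1$ behavior of edges. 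Once this is in hand, the convergence characterization drops out: combining the matched identification with the Hausdorff distance decomposes $d_L(G_i, G)$, up to controlled boundary-shell error, into a maximum over paired-vertex distances and paired-edge Hausdorff distances, so convergence in $d_L$ is equivalent to simultaneous convergence of all paired vertices and paired edges.

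The main obstacle is passing conditions (2) and (3) to the limit, since $d_L$-convergence does not immediately yield $C^1$-convergence of edges. I plan to exploit the fact that a tubular neighborhood of radius $2\epsilon$ on an edge forces a curvature upper bound of $1/(2\epsilon)$, and then apply an Arzel\`a--Ascoli argument to unit-speed parametrizations of the matched edge sequences: a uniform curvature bound along a Hausdorff-convergent sequence of smooth embedded curves, together with the a priori assumption that the limit lies in the smooth class $\mathcal{G}_r^n$, upgrades Hausdorff convergence of edges to $C^1$-convergence of their parametrizations. That is enough to propagate the tubular-neighborhood condition and the local-separation inequality of condition (3) to $G$.
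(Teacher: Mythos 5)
Your first three paragraphs track the paper's own argument: the clopen decomposition of $Y_\delta$ by topological type via the canonical-isomorphism lemma, finiteness from packing and degree bounds, and the identification of matched vertices and edges of a tail of the sequence with those of the limit (the paper applies the matching lemma directly between $G_i$ and $G$, using that the limit, being an embedded graph, is $\delta_0$-thickenable for some $\delta_0>0$; you match the $G_i$ pairwise and take limits, which amounts to the same thing). The convergence characterization then falls out as you say.

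The gap is in your final step. You correctly isolate the tubular-neighborhood condition as the hard part, but the plan --- upgrade Hausdorff convergence of matched edges to $C^1$ convergence of parametrizations via Arzel\`a--Ascoli, then declare that ``enough'' --- does not close it. Admitting a tubular neighborhood of radius $2\epsilon$ is not a local curvature condition: it also forbids the curve from doubling back on itself to within distance roughly $4\epsilon$. $C^1$ convergence controls tangent lines and, via the Lipschitz bound on $f_i'$, gives an a.e.\ curvature bound on the limit, but by itself it does not exclude the limit's self-distance degenerating at a pair of parameter values where each $f_i$ was only barely legal; you need a quantitative characterization of the tubular-neighborhood radius that is manifestly closed under the convergence you actually have. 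The paper uses exactly such a characterization: the global radius of curvature $\rho\paren{e}=\min\set{r\paren{x,y,z}}$ over triples of distinct points of $e$, with $r$ the circumradius. Every triple on the limit edge is a limit of triples on the $e_i$, the circumradius is continuous off the diagonal, so $\rho\geq 2\delta$ passes to the limit --- and only $C^0$ convergence of constant-speed parametrizations is needed, which Arzel\`a--Ascoli already yields from the uniform length bound, so your $C^1$ upgrade is unnecessary as well as insufficient. (If you insist on the $C^1$ route, you would have to invoke Federer's tangent-cone characterization of reach, $d\paren{y-x,\mathrm{Tan}_x e}\leq\abs{y-x}^2/\paren{4\delta}$ for all $x,y\in e$, which is again a pointwise-closed inequality; without one of these two characterizations the propagation step is unjustified.) A smaller misdiagnosis: condition (3) is a family of closed distance inequalities between an edge and the rest of the graph away from the vertex balls, and passes to the limit under plain Hausdorff convergence of edges and vertices; it needs no $C^1$ information.
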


\begin{proof}
The previous lemma implies that the subset of $Y_\delta$ having a given topological type is both closed and open in that set, so $Y_\delta$ is the disconnected union of these sets. Only finitely many topological types intersect $Y_\delta$ because the number of vertices and edges of a graph in that set is bounded.

Let $\set{G_i}$ be a sequence of $\delta$-thickenable embedded graphs in $\mathcal{G}^n_r$ converging to a limit $G.$ $G$ is $\delta_0$-thickenable for some $0<\delta_0<1,$ so by the previous lemma we may find an $i$ so that for $i>I,$ there is a canonical graph isomorphism $\phi_i:G_i\rightarrow G.$ Furthermore, if $c$ is an edge (or vertex) of $G,$ then $\psi_i^{-1}\paren{c}\rightarrow c$ in the local Hausdorff distance.

It remains to be shown that $G\in Y_\delta.$ All conditions in the definition of $\delta$-thickenability except for the existence of a tubular neighborhood of radius $2\delta$ second follow quickly from from Lemma~\ref{shellLemma}, and the convergence of edges to edges and vertices to vertices.

The global radius of curvature of a curve is the minimal $\epsilon$ so that that curve does not have a tubular neighborhood of radius $\epsilon~\cite{1999gonzales}.$ It is given by
\s{\rho\paren{e}:=\min\set{r\paren{x,y,z}: x,y,z\in e, x\neq y, x\neq z, y\neq z}}
where $r\paren{x,y,z}$ is the circumradius of the triangle with vertices $x,y,$ and $z.$

Let $e$ be an edge of $G,$ and $\set{e_i}$ be a sequence of edges converging to $e.$ We will show that $e$ has a global radius of curvature less than or equal to $2\delta.$ By taking closures in $\widebar{B_r}$, we may assume that the edges $e$ and $e_i$ are closed. Let $f:\brac{0,1}\rightarrow\R$ and $f_i:\brac{0,1}\rightarrow\R$ be constant-speed parametrizations of $e$ and $\set{e_i},$ respectively, with $f_i\paren{0}\rightarrow f\paren{0}$ and $f_i\paren{1}\rightarrow f\paren{1}.$ Each $e_i$ has a tubular neighborhood of radius $2\delta,$ so its length is bounded above by $L=\frac{V_n\paren{r}}{V_{n-1}\paren{2\delta}},$ where $V_i\paren{x}$ is the volume of the $i$-dimensional ball of radius $x.$ Therefore the derivatives of the $f_i'$ are uniformly bounded, and the Arzela-Ascoli theorem implies that there is a convergent subsequence of $\set{f_i},$ which must convergence to $f.$ This is true for all convergent subsequences, so $f_i\rightarrow f.$ 

Let $f\paren{a_1}=y_1, f\paren{a_2}=y_2$ and $f\paren{a_3}=y_3$ be three distinct points of $e.$ Let 
\s{\xi=\min\paren{d\paren{x_1,x_2},d\paren{y_1,y_3},d\paren{y_2,y_3}}} and choose $J$ so that $i>J$ implies that $d\paren{f_i\paren{a_j},f\paren{a_j}}<\xi/4$ for $j\in\set{1,2,3}.$ The function $r:\R^n\times\R^n\times\R^n\rightarrow\R\cup\infty$ sending a triplet of points to the circumradius of the triangle formed by those points is continuous on the set $\set{x_1,x_2,x_3\in\R^n\times\R^n\times\R^n: x_1\neq x_2, x_1\neq x_3, x_2\neq x_3}$~\cite{WeissteinCircumradius}:
It follows that $r\paren{f_i\paren{a_1},f_i\paren{a_2},f_i\paren{a_3}}$ is defined for $i>J$ and that
\s{\lim_{i\rightarrow\infty}r\paren{f_i\paren{a_1},f_i\paren{a_2},f_i\paren{a_3}}\rightarrow r\paren{f\paren{a_1},f\paren{a_2},f\paren{a_3}}}
so $r\paren{f\paren{a_1},f\paren{a_2},f\paren{a_3}}\geq 2\delta.$ As this is true for all triplets of distinct points on $e,$ it follows that the global radius of curvature of $G$ is greater than or equal to $2\delta$ and $G$ is $\delta$-thickenable, as desired.
\end{proof}

\begin{lemma}
Let $Z_{S,\delta,D}\subset\ X_{D,\delta}\subset \mathcal{G}_r^{n}$ be the set of $\delta$-thickenable graphs of a fixed topological type $S,$ the $n-1$ Frenet-Serret curvatures of whose edges are bounded in magnitude by $D.$ The smooth and Hausdorff metric topologies coincide on $Z_{S,\delta,D},$ and it is compact. 
\end{lemma}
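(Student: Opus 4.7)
The plan is to use the preceding pairing lemma to reduce the coincidence of topologies to an edge-by-edge question, then leverage the Frenet-Serret bounds via Arzela-Ascoli to upgrade Hausdorff convergence of edges to smooth convergence; compactness will follow by diagonalizing this Arzela-Ascoli argument over the fixed finite vertex and edge set of $S$.

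First I would note that the smooth topology on $Z_{S,\delta,D}$ is manifestly finer than the one induced by $d_L$, so the identity map from the smooth topology to the Hausdorff one is automatically continuous. For the converse, assume $G_i \to G$ in $d_L$ with all graphs in $Z_{S,\delta,D}$. The previous pairing lemma, applied with $\delta_0 = \delta$, produces canonical combinatorial isomorphisms $\phi_i : G_i \to G$ for all sufficiently large $i$, sending each vertex to the unique nearby vertex of $G$ and each edge $e^i$ to the unique edge $e$ satisfying the Hausdorff estimate of that lemma. Combining vertex convergence with this estimate yields $d_H\paren{e^i, e} \to 0$ on all of $\widebar{B_r}$, reducing matters to smooth convergence of individual paired edges.

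For such a paired sequence $e^i \to e$, I would parametrize by arc length from a consistently designated endpoint and rescale the domain to $\brac{0,1}$. The lengths of the $e^i$ are uniformly bounded above by $\vol{B_r}/V_{n-1}\paren{2\delta}$ (the $2\delta$-tube lies in $B_r$) and below by $\delta$ (vertex separation), so the rescaling factors are controlled. The Frenet-Serret ODE together with $\abs{\kappa_j^i} \leq D$ yields uniform bounds on derivatives of the parametrizations up to order $n$, and Arzela-Ascoli extracts a $C^{n-1}$-convergent subsequence whose limit must coincide with the parametrization of $e$ by Hausdorff uniqueness, so the full sequence converges in $C^{n-1}$. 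For compactness, given any sequence $\set{G_i} \subset Z_{S,\delta,D}$ I would first pass to a subsequence along which every vertex converges in $\widebar{B_r}$ and then diagonalize over the finitely many edges using the same Arzela-Ascoli argument. The limit $G_\infty$ is $\delta$-thickenable by the closedness of $Y_\delta$ established earlier, retains topological type $S$ because the pairings $\phi_i$ are eventually trivial, and has Frenet-Serret curvatures bounded by $D$ since these curvatures pass to the limit under $C^n$ convergence, so $G_\infty \in Z_{S,\delta,D}$.

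The main obstacle is the gap between the $C^{n-1}$ convergence delivered cleanly by Arzela-Ascoli and the full $C^\infty$ convergence required by the Frechet topology on $\text{Gr}\paren{\brac{0,1}, \R^n}$: uniform bounds on $\abs{\kappa_j}$ for $j \leq n-1$ do not directly control derivatives of the $\kappa_j$. Resolving this cleanly likely requires iterating the Frenet-Serret system while exploiting the reparametrization freedom in $\text{Gr}\paren{\brac{0,1}, \R^n}$ to absorb higher derivatives of the curvatures, or restricting attention to an equivalent weaker topology (such as $C^n$) on $Z_{S,\delta,D}$ under which the curvature bounds directly yield compactness.
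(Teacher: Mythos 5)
Your overall structure is sound, but it differs from the paper's in one organizational respect: the paper does not prove the two directions of the homeomorphism separately. It proves sequential compactness of $Z_{S,\delta,D}$ in the \emph{smooth} topology (extracting, edge by edge, smoothly convergent subsequences of constant-speed parametrizations via Arzela--Ascoli, using the $\delta$-thickenability lower bound on edge lengths and separation to keep the limit edges embedded and distinct), and then obtains the coincidence of topologies for free from the fact that a continuous bijection from a compact space onto a Hausdorff space is a homeomorphism. Your route through the pairing lemma to establish continuity of $s^{-1}$ directly is workable but does strictly more work; on the other hand, your observation that the limit lies in $Z_{S,\delta,D}$ because thickenability is closed and the combinatorial pairings are eventually trivial is a point the paper passes over rather quickly, so that part of your write-up is a genuine improvement in care.

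The obstacle you flag at the end is real, and you should not treat it as a technicality to be smoothed over: it is the crux. The paper's proof asserts that ``all higher derivatives of $\set{f_i}$ can be expressed in terms of their generalized curvatures, and so are uniformly bounded at each order,'' but this is false under a pointwise bound $\abs{\kappa_j}\leq D$ alone, since already $f'''=\kappa_1'N_1+\kappa_1\paren{-\kappa_1T+\kappa_2N_2}$ involves $\kappa_1'$. (Your own claim of control ``up to order $n$'' is likewise too optimistic; the pointwise curvature bounds control only $f'$ and $f''$, so Arzela--Ascoli delivers $C^{1,\alpha}$ compactness, not $C^{n-1}$.) The gap is not merely in the proof: consider edges $f_k\paren{s}=\paren{s,Dk^{-2}\sin\paren{ks}}$ in $\R^2$, which have curvature uniformly of order $D$, are $\delta$-thickenable for fixed $\delta$, converge in $d_L$ to a straight segment, yet have $\norm{f_k'''}\sim Dk\rightarrow\infty$ and hence no smoothly convergent subsequence. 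So on $Z_{S,\delta,D}$ as literally defined the smooth and Hausdorff topologies do \emph{not} coincide and the set is \emph{not} compact in the smooth topology. The statement needs either a strengthened hypothesis (uniform bounds on the curvatures and all of their derivatives, i.e.\ a $C^\infty$ bound order by order, with $X_{\delta,D}$ redefined accordingly) or a weakened topology on the nonlinear Grassmannian (e.g.\ $C^{1,\alpha}$ or $C^2$), either of which makes both your argument and the paper's go through verbatim. Identifying this is the most valuable part of your proposal; the paper's own proof does not resolve it.
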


\begin{proof}
We will show that  $Z_{S,\delta,D}$ is compact in the smooth topology. This will imply that the smooth and Hausdorff metric topologies coincide on this set, because a bijective map from a compact space into a Hausdorff space is necessarily a homeomorphism. Let $\set{G_i}$ be a sequence of  graphs in $Z_{S,\delta,D},$ and $\set{e_i}$ be any sequence of of edges of the graphs $\set{G_i}.$ If any of the $\set{e_i}$ are not closed in $B_r,$ replace each edge with its closure in $\widebar{B}_r.$ Parametrize the $e_i$ by constant-speed maps $f_i:\brac{0,1}\rightarrow \widebar{B}_r\paren{0}.$ The Frenet-Serret theorem implies that all higher derivatives of $\set{f_i}$ can be expressed in terms of their generalized curvatures, and so are uniformly bounded at each order. It follows from the Arzela-Ascoli Theorem that we can find a subsequence of $\set{f_i},$ $\set{f_{i_j}},$ that converges smoothly to function $f\in C^{\infty}\brac{\brac{0,1},\widebar{B}_r}.$ Furthermore,
\s{f_i'=L\paren{e_i}>\frac{\delta}{2}}
so $f'\neq 0,$ and $f$ is an embedding. Therefore the edges $\set{e_{i_j}}$ of the subsequence $\set{G_{i_j}}$ converge smoothly to a limit edge $E_1.$

If $S$ has more than one edge, we can find a sequence $\set{\hat{e}_{i_j}}$ of edges of $\set{G_{i_j}}$ so that
\s{d_L\paren{\hat{e}_{i_j},e_{i_j}}>\delta/2}
 Using the same argument as in the previous paragraph, we can refine the sequence further so that $\hat{e}_{i_k}$ converge smoothly to a limit edge $E_2$ distinct from $E_1.$ Continuing this for each integer less than or equal to the number of edges of $S$ yields a subsequence of $\set{G_i}$ converging in smooth topology to a limit $\hat{G},$ which must be in  $Z_{S,\delta,D}.$ Thus $Z_{S,\delta,D}$ is sequentially compact, and therefore compact.
\end{proof}

\propTwo*
\begin{proof}
The first two statements follow immediately from the preceding lemmas, and the third does from the corresponding lemmas for the varifold topology. We do not include them here for the purpose of brevity, but note that they are a straightforward consequence of~\ref{prop_HausdorffVarifold}, the Allard compactness theorem, and the fact that the multiplicity of a convergent sequence of edges in a $\delta$-thickenable graph equals the multiplicity of their limit.
\end{proof}

\bibliographystyle{plain}
\bibliography{Bibliography}

\begin{thebibliography}{10}

\bibitem{1972allard}
William Allard.
\newblock On the first variation of a varifold.
\newblock {\em Annals of Mathematics}, 1972.

\bibitem{2001benjamini}
I.~Benjamini and O.~Schramm.
\newblock Recurrence of distributional limits of finite planar graphs.
\newblock {\em Electronic Journal of Probability}, 6:1--13, 2001.

\bibitem{1999billingsley}
Patrick Billingsley.
\newblock {\em Convergence of Probability Measures}.
\newblock John W, 1999.

\bibitem{1981bruijin}
N.G. de~Bruijin.
\newblock Algebraic theory of penrose's non-periodic tilings of the plane, i.
\newblock {\em Indagationes Mathematicae}, 1981.

\bibitem{1981bruijin2}
N.G. de~Bruijin.
\newblock Algebraic theory of penrose's non-periodic tilings of the plane, ii.
\newblock {\em Indagationes Mathematicae}, 1981.

\bibitem{deLellis}
Camillo de~Lellis.
\newblock Allard's interior regularity theorem: an invitation to stationary
  varifolds.
\newblock http://user.math.uzh.ch/delellis/fileadmin/delellis/allard-24.pdf.

\bibitem{1976dieudonne}
Jean Dieudonne.
\newblock {\em Treatise on Analysis, Volume II}.
\newblock Academic Press, 1976.

\bibitem{2014elsey}
Matt Elsey and Dejan Slepcev.
\newblock Mean-curvature flow of voronoi diagrams.
\newblock {\em Journal of Nonlinear Science}, 2014.

\bibitem{2014balmaz}
F.~Gay-Balmaz and C.~Vizman.
\newblock Principal bundles of embeddings and nonlinear grassmannians.
\newblock {\em Annals of Global Analysis and Geometry}, 2014.

\bibitem{1999gonzales}
Oscar Gonzalez and John~H. Maddocks.
\newblock Global curvature, thickness, and the ideal shapes of knots.
\newblock {\em Proceedings of the National Academy of Sciences}, 1999.

\bibitem{2007gruber}
Peter~M Gruber.
\newblock {\em Convex and discrete geometry}.
\newblock Springer-Verlag Berlin Heidelberg, 2007.

\bibitem{2014ilmanen}
Tom Ilmanen, Andre Neves, and Felix Schulze.
\newblock On short term existence for the planar network flow.
\newblock 2014.
\newblock http://arxiv.org/abs/1407.4756.

\bibitem{2002kallenberg}
Olav Kallenberg.
\newblock {\em Foundations of Modern Probability}.
\newblock Springer, 2002.

\bibitem{2010lazar}
E.A. Lazar, R.D. MacPherson, and D.J. Srolovitz.
\newblock {A more accurate two-dimensional grain growth algorithm}.
\newblock {\em Acta Materialia}, 58(2):364--372, January 2010.

\bibitem{2012lazar}
Menachem Lazar.
\newblock {\em The Evolution of Cellular Structures via Curvature Flow}.
\newblock PhD thesis, Princeton University, 2012.

\bibitem{2001lindenstrauss}
Elon Lindenstrauss.
\newblock Pointwise theorems for amenable groups.
\newblock {\em Inventiones Mathematicae}, 2001.

\bibitem{2012lovasz}
L.~Lov{\'a}sz.
\newblock {\em Large networks and graph limits}, volume~60.
\newblock American Mathematical Soc., 2012.

\bibitem{MNPS16}
Carlo Mantegazza, Matteo Novaga, Alessandra Pluda, and Felix Schulze.
\newblock Evolution of networks with multiple junctions.
\newblock https://arxiv.org/abs/1611.08254.

\bibitem{2012masonB}
J.~K. Mason, E.~A. Lazar, R.~D. MacPherson, and D.~J. Srolovitz.
\newblock Statistical topology of cellular networks in two and three
  dimensions.
\newblock {\em Physical Review E}, 86(5), 2012.

\bibitem{nauty}
B.~D. McKay and A.~Piperno.
\newblock Practical graph isomorphism, ii.
\newblock {\em J. Symbolic Computation}, 2013.

\bibitem{1781Monge}
G.~Monge.
\newblock M{\'e}moire sur la th{\'e}orie des d{\'e}blais et remblais,
  m{\'e}moires acad.
\newblock {\em Royale Sci. Paris}, 3, 1781.

\bibitem{1986mullins}
W.~W. Mullins.
\newblock The statistical self-similarity hypothesis in grain growth and
  coarsening.
\newblock {\em J. Appl. Phys.}, 1986.

\bibitem{1956mullins}
W.W. Mullins.
\newblock {Two-Dimensional Motion of Idealized Grain Boundaries}.
\newblock {\em Journal of Applied Physics}, 27:900--904, March 1956.

\bibitem{CGAL}
The~CGAL Project.
\newblock {\em CGAL User and Reference Manual}.
\newblock CGAL Editorial Board, 4.10 edition, 2017.

\bibitem{1998Rubner}
Y.~Rubner, C.~Tomasi, and L.~J. Guibas.
\newblock A metric for distributions with applications to image databases.
\newblock In {\em Computer Vision, 1998. Sixth International Conference on},
  pages 59--66. IEEE, 1998.

\bibitem{transport}
Dominic Schuhmacher.
\newblock {\em Package 'transport'}.
\newblock R package version 1.14.4.

\bibitem{2016schweinhart}
B.~Schweinhart, J.~K. Mason, and R.~D. MacPherson.
\newblock Topological similarity of random cell complexes, and applications.
\newblock {\em Physical Review E}, 2016.
\newblock http://arxiv.org/pdf/1407.6989v1.pdf.

\bibitem{1983simon}
Leon Simon.
\newblock {\em Lectures on Geometric Measure Theory}.
\newblock Proceedings of the Centre for Mathematical Analysis, The Australian
  National University, 1983.

\bibitem{CGALspatial}
Hans Tangelder and Andreas Fabri.
\newblock {dD} spatial searching.
\newblock In {\em CGAL User and Reference Manual}. CGAL Editorial Board, 2017.

\bibitem{1970topsoe}
Flemming Tops{\o}e.
\newblock {\em Topology and Measure}.
\newblock Springer, 1970.

\bibitem{WeissteinCircumradius}
Eric~W. Weisstein.
\newblock Circumradius.
\newblock MathWorld.
\newblock http://mathworld.wolfram.com/Circumradius.html.

\end{thebibliography}

\end{document}